\documentclass[12pt,a4paper,reqno]{amsart}
\usepackage[left=3cm,top=2cm,right=3cm,bottom=2cm]{geometry}
\usepackage{graphicx}
\usepackage{tikz}
\usetikzlibrary{decorations.pathreplacing, positioning}

 \usepackage{mathptmx}

\usepackage{multirow}
\usepackage{amsmath,amssymb, amscd}
\usepackage{tikz}
\usepackage{tkz-berge}
\usetikzlibrary{decorations,arrows,shapes}
\usepackage[noend,noline,boxed]{algorithm2e}
\usepackage[normalem]{ulem} 
\usepackage{url}

\newcommand{\x}{\mathbf{v}} 

\newtheorem{theorem}{Theorem}

\newtheorem{lemma}{Lemma}
\newtheorem{corollary}{Corollary}

\newtheorem{proposition}{Proposition}
\newtheorem{remark}{Remark}

\begin{document}

\title[Characterizing graphs with second largest distance eigenvalue less than -1/2 ]{Characterizing graphs with second largest distance eigenvalue less than -1/2}

\keywords{Distance matrix. Chordal graph. Second largest distance eigenvalue.
\subjclass{05C50, 05C75}}

\author[M. Abd\'on]{Miriam Abd\'on}
\address{Universidade Federal Fluminense, Niter\' oi, RJ, Brazil}
\email{miriam\_abdon@id.uff.br}

\author[L. Markenzon]{Lilian Markenzon}
\address{Universidade Federal do Rio de Janeiro, Rio de Janeiro, RJ, Brazil}
\email{markenzon@nce.ufrj.br}
\author[C. Vinagre]{Cybele T. M. Vinagre}
\address{Universidade Federal Fluminense, Niter\' oi, RJ, Brazil}
\email{cybele\_vinagre@id.uff.br}

\begin{abstract}
  Let $G$ be a connected graph with vertex set $V$. The distance, $d_G(u, v)$, between vertices $u$ and $v$ of $G$ is defined as the length of a shortest path between $u$ and $v$ in $G$. The distance matrix of $G$ is the matrix $\mathbf{D}(G) =[d_G(u, v)]_{u,v\in V}$. The second largest distance eigenvalue $\lambda_2(G)$ of $G$ is the second largest one in the spectrum of $\mathbf{D}(G)$.
  In this work, we completely characterize the connected graphs $G$  for which  $\lambda_2(G)<-1/2$ through approaches both spectral and structural.
\end{abstract}

\maketitle
\section{Introduction}\label{sec:intro}

The problem of characterizing connected graphs $G$ whose distance matrix has
second largest eigenvalue less than $-1/2$ has recently attracted considerable
attention in Spectral Graph Theory. Several papers have been published on this
topic and throughout this article, we highlight some of these contributions.
For a general overview of the origins and motivations of the problem, as well as
the current state of the art, we refer the reader to the article by Guo and Zhou
\cite{guo2024graphs} and to the preprint by Yang and Wang
\cite{yang2025tricyclicgraphssecondlargest}. Additional background on the
development of research related to the distance matrix of graphs can be found in
the surveys by Aouchiche and Hansen \cite{AOUCHICHE2014301} and by Lin
\textit{et al.} \cite{SurveyLinShu}.

In this work, we take as our starting point a result from \cite{guo2024graphs},
which establishes that any connected graph satisfying $\lambda_2 < -1/2$ must be
chordal. Based on this result and on structural aspects of chordal graphs, in
particular those concerning the cardinality and multiplicity of their minimal
vertex separators (see definitions in the text), we obtain a complete
characterization of connected graphs satisfying this spectral condition.
More specifically, since block graphs with $\lambda_2 < -1/2$ were already
characterized by Xue \textit{et al.} in \cite{Xue-Lin-Shu-block}, our focus here is
on connected chordal graphs that are not block graphs and that satisfy this
inequality.

The paper is organized as follows. In Section~2, we recall preliminary notions on
chordal graphs and on subclasses that play a central role in our analysis,
including Ptolemaic graphs, block graphs, and split graphs. We also summarize the
properties of the distance matrix that will be used in the subsequent sections.
Section~3 reviews known results concerning graphs whose distance matrix satisfies
$\lambda_2 < -1/2$. In Section~4, we provide a structural description of connected
chordal graphs satisfying this condition by proving the following results (see
definitions in the text):

\begin{quote}
\textit{Let $G$ be a connected chordal graph that is not a block graph and whose distance
matrix has the second largest eigenvalue less than $-1/2$. If $G$ has the diameter~2
then $G$ is a connected induced subgraph of a relaxed block star graph
(Theorem~\ref{theo:pto-mvs2}). If the diameter of $G$ is equal to 3, then $G$ is a connected
induced subgraph of either a $Pt_1$ graph or a $Pt_2(p,q)$ graph, with
$p,q \ge 2$ (Theorem~\ref{theo:pto-mvs3}). Moreover, any such graph $G$ has
diameter at most~3.}
\end{quote}

\noindent Finally, in Section~5 we address the spectral aspects of our characterization and
prove that relaxed block star graphs, as well as the graphs $Pt_1$ and $Pt_2(p,q)$,
indeed satisfy the condition $\lambda_2 < -1/2$. To this end, we apply Descartes’
Rule of Signs and Sturm’s Theorem, thereby completing the characterization of
connected chordal graphs whose distance matrix has second largest eigenvalue less
than $-1/2$, see Corollaries \ref{cor:caract-diam2} and \ref{cor:caract-diam3}.

\section{Preliminaries}
  Let $G=(V,E)$ be a graph with vertex set $V$ and edge set $E$.
The {\em set of neighbors\/} of  $v \in V$ is denoted by
$N_G(v) = \{ w \in V(G) \mid \{v,w\} \in E\}$. The vertex $v$ is said to be \textit{universal} in $G$ when  $N_G(v)\cup\{v\} =V$. 
For any $S \subset V$, let $G[S]$ be the subgraph of $G$
induced by $S$. If $G[S]$ is a complete graph then $S$ is a \textit{clique}.
For an integer $p\geq 1$, $pG$ denotes the disjoint union of $p$
copies of $G$. Also, the \textit{join} $G_1\oplus G_2$ of graphs $G_1$ and $G_2$ where $G_1=(V_1,E_1)$ and $G_2=(V_2,E_2)$ have disjoint vertex sets, is the graph union $G_1 \cup G_2$ together with all the edges joining
 the elements of $V_1$ and $V_2$.
A graph $G$ is said to be \textit{free of  $H$} or $H$-free if $G$ does not contain the graph $H$  as an induced subgraph.  We say then that the graph $H$ is a \textit{forbidden subgraph} of $G$.
\smallskip

From now on, we assume that all the graphs are simple and connected.

\subsection*{Chordal graphs}
A graph is \textit{chordal} if every cycle of length at least four has a chord,
where a \textit{chord} is an edge joining two non-adjacent vertices of the cycle.

Given a chordal graph $G=(V,E)$,  a vertex $v$ is said to be {\em simplicial\/} if $N(v)$  is a
clique in $G$.
A subset $S \subset V$  is a {\em vertex separator}  for
non-adjacent vertices $u$  and $v$  (a $uv$-{\em separator}) if the
removal of $S$ from the graph separates $u$ and $v$  into distinct
connected components. If no proper subset of $S$  is a
$uv$-separator then $S$ is a {\em minimal $uv$-separator}. When the
pair of vertices remains unspecified, we refer to $S$  as a {\em
minimal vertex separator}, a  $mvs$ for short.
The set of minimal vertex separators  of $G$ is denoted by $\mathbb{S}$.

\begin{lemma}\cite{Di61}\label{chordal-1}
    $G$ is chordal if and only if every minimal vertex separator of $G$ is a clique.
\end{lemma}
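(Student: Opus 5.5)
We prove both directions of Lemma~\ref{chordal-1}.

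\emph{Chordal $\Rightarrow$ every mvs is a clique.} Suppose $G$ is chordal and let $S$ be a minimal $uv$-separator. Let $C_u$ and $C_v$ be the components of $G-S$ containing $u$ and $v$. By minimality of $S$, every $x\in S$ has a neighbor in $C_u$ and a neighbor in $C_v$; otherwise $S\setminus\{x\}$ would still separate $u$ from $v$.

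Take distinct $x,y\in S$ and assume, for contradiction, that they are non-adjacent. Since $C_u$ is connected and both $x$ and $y$ have neighbors in it, there is an $x$--$y$ path whose interior lies in $C_u$. Choose one, $P_u$, of minimum length. Define $P_v$ through $C_v$ in the same way. Both paths have length at least $2$, because $x$ and $y$ are non-adjacent.

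Gluing $P_u$ and $P_v$ gives a cycle of length at least $4$. We check that it has no chord:
\begin{itemize}
\item A chord between an interior vertex of $P_u$ and an interior vertex of $P_v$ would join $C_u$ to $C_v$, which is impossible since they are different components of $G-S$.
\item A chord inside $P_u\cup\{x,y\}$ would give a shorter $x$--$y$ path through $C_u$, contradicting the minimality of $P_u$. The same holds for $P_v$.
\item The only remaining candidate is the pair $xy$ itself, which is a non-edge by assumption.
\end{itemize}
So $G$ contains a chordless cycle of length at least $4$, contradicting chordality. Hence $S$ is a clique.

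\emph{Every mvs is a clique $\Rightarrow$ chordal.} Suppose $G$ has a chordless cycle $C$ of length at least $4$. Pick two non-adjacent vertices $a,b$ on $C$. The cycle splits into two $a$--$b$ arcs, and each arc has an interior vertex.

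The set $V\setminus\{a,b\}$ minus nothing useful is not the right start; instead, take any $ab$-separator, for example $N(a)$, which separates $a$ from $b$ because $a$ and $b$ are non-adjacent. Shrink it to a minimal $ab$-separator $S$. Since $S$ must meet every $a$--$b$ path, it contains an interior vertex $c$ of the first arc and an interior vertex $d$ of the second arc.

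Now $c$ and $d$ are non-adjacent. Two vertices on opposite arcs could only be adjacent through a chord of $C$, and $C$ has none. (Interior vertices of opposite arcs are never consecutive on the cycle, since $a$ and $b$ lie between them.) Thus $S$ is a minimal vertex separator that is not a clique, a contradiction. Hence $G$ is chordal.

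\emph{Main obstacle.} The delicate step is the chordlessness argument in the first direction. It depends on choosing $P_u$ and $P_v$ to be shortest paths, so that neither has internal shortcuts, and on using the separation of $C_u$ from $C_v$ to rule out crossing chords.
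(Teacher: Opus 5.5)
Your proof is correct. It is the standard argument behind the cited result: shortest paths through the two full components $C_u$ and $C_v$ give a chordless cycle in the forward direction, and for the converse a minimal $ab$-separator extracted from $N(a)$ must hit the interiors of both arcs of a chordless cycle. The paper itself gives no proof and simply cites Dirac~\cite{Di61}. The only blemish is the stray sentence ``The set $V\setminus\{a,b\}$ minus nothing useful is not the right start'', which should be deleted.
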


A {\em clique-tree} of a chordal graph $G$ is defined as a tree $T$  whose vertices
are the maximal cliques of $G$ and such that for every  two maximal cliques $Q$ and $Q^\prime$,
each clique in the path from $Q$ to $Q^\prime$ in $T$ contains $Q\cap Q^\prime$.
Blair and Peyton \cite{BP93}   proved that,
for a clique-tree $T=(V_T, E_T)$ of $G$,
a set $S\subset V$ is a minimal vertex separator of $G$ if
and only if $S= Q' \cap Q''$ for some edge $\{Q', Q''\}\in E_T$.
Moreover, the multiset  ${\mathbb M}$ of
the minimal vertex separators of $G$ is the same for every
clique-tree of $G$.
We define the {\em multiplicity} $\mu(S)$ of the minimal vertex separator $S$ as the number of times that $S$ appears in ${\mathbb
M}$.
The algorithm presented in Markenzon and Pereira \cite{MP10}
computes
the set $\mathbb S$ of minimal vertex separators  of a chordal graph  and
their multiplicities  in linear time.

Another concepts about chordal graphs 
can be found in Blair and Peyton \cite{BP93} and Golumbic \cite{Go04}.

\subsection*{Subclasses of chordal graphs}

We expose here the relations between chordal graphs and other well known classes of graphs, and refer to  chapter 6 of the book \cite{Beineke_Golumbic_Wilson_2021} for more details.

The \textit{distance} between vertices $u$ and $v$ in a connected graph  $G=(V,E)$, denoted by $d_G(u,v)$, is defined as the length of a shortest path linking $u$ and $v$ in $G$. The \textit{diameter of $G$}
 is the number $\mathrm{diam}(G) = \max\{d_G(v, u )\ | \ u,v \in V\}$.
The graph $G$ is \textit{distance hereditary} if the distance between any two vertices remains
 the same in every connected induced subgraph.
 The graph $G$ is \textit{Ptolemaic} if for any four vertices $u, v, w, x$ of $G$, it holds that
$d_G(u, v)d_G(w, x) \leq  d_G(u,w)d(v, x) + d_G(u, x)d_G(v,w)$.
Ptolemaic graphs are chordal graphs.
\smallskip

The \textit{gem} and the  \textit{diamond} are the graphs
illustrated in Figure \ref{fig-forbidden1}.

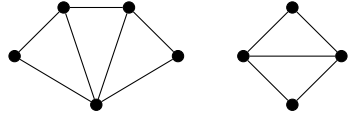
\begin{figure}[h]
\begin{center}
\begin{tikzpicture}
 [scale=.43,auto=left]
 \tikzstyle{every node}=[circle, draw, fill=black,
                        inner sep=1.5pt, minimum width=4pt]

  \node (c) at (1,10) {};
  \node (d) at (3,10) {};
  \node (b) at (-0.5,8.5)  {};
  \node (e) at (4.5,8.5)  {};
  \node (a) at (2,7) {};

  \foreach \from/\to in {c/d,c/b,c/a,d/a,d/e,b/a,e/a}
    \draw (\from) -- (\to);

     \node (i) at (8,10) {};
  \node (g) at (6.5,8.5) {};
 \node (h) at (9.5,8.5)  {};
  \node (f) at (8,7)  {};
  \foreach \from/\to in {i/g,i/h, g/h,g/f, h/f}
    \draw (\from) -- (\to);

   \end{tikzpicture}
\end{center}

\caption{Gem and  diamond graphs.}
\label{fig-forbidden1}
\end{figure}

\begin{lemma} \cite{Ho81}\label{lem:pto}
The following conditions are equivalent:
\begin{enumerate}
 \item[(i)] $G$ is a Ptolemaic graph.
\item[(ii)] $G$ is gem-free and chordal.
\item[(iii)] $G$ is distance hereditary and chordal.
\end{enumerate}
\end{lemma}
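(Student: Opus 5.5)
We prove the cycle of implications (i)$\Rightarrow$(iii)$\Rightarrow$(ii)$\Rightarrow$(i).

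\emph{(i)$\Rightarrow$(iii).} The paper already records that Ptolemaic graphs are chordal, so only distance heredity needs proof. Suppose it fails. Then some connected induced subgraph $H$ contains vertices $u,v$ with $d_H(u,v)>d_G(u,v)$. Choose such a pair with $d_G(u,v)$ minimal, and take a shortest $G$-path between them. That path must leave $H$. From it and a shortest $H$-path we extract four vertices $u,v,w,x$, where $w,x$ are neighbours of $u$ and $v$ on the two paths. For these four vertices the Ptolemaic inequality $d(u,v)d(w,x)\le d(u,w)d(v,x)+d(u,x)d(v,w)$ fails. The usual certificate is that the extracted configuration induces a gem, or a cycle of length at least five. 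Since that step is essentially the classical Howorka argument, it is cleaner to route through (ii).

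\emph{(i)$\Rightarrow$(ii).} Label the gem with hub $a$ and path $b,c,d,e$. Then $d(b,e)=3$ and $d(c,d)=1$, while $d(b,c)=d(b,d)=1$ up to labeling. Checking $u=b,v=e,w=c,x=d$ gives $3\cdot 1\le 1\cdot 2+2\cdot 1$, which holds, so this quadruple is not the witness. With hub $a$ adjacent to all of $b,c,d,e$, the distances in the gem are $d(b,d)=d(c,e)=d(b,e)=2$ and all others $1$. Take $u=b,v=d,w=c,x=e$. Then $d(b,d)d(c,e)=4$, while $d(b,c)d(d,e)+d(b,e)d(c,d)=1+2=3$, so the inequality fails. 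Any induced subgraph of a graph has distances at least those in the graph, so we still need the gem distances to be realised in $G$. In $G$ each of these pairs is at distance at most $2$ via $a$, and non-adjacent pairs are exactly $2$. So the quadruple is a violating witness in $G$, and Ptolemaic graphs are gem-free. Non-chordality is excluded similarly. An induced $C_n$ with $n\ge 5$ of minimal length gives four vertices violating the inequality. An induced $C_4$ is excluded directly: with $u,w,v,x$ in cyclic order, $2\cdot 2\le 1+1$ is false.

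\emph{(ii)$\Rightarrow$(iii).} Use the known characterisation of distance-hereditary graphs as those with no induced house, hole, domino or gem. Chordality excludes the house, holes and domino, and gem-freeness excludes the gem. If we prefer a self-contained argument, we argue that a minimal distance-increasing configuration in a chordal graph forces a gem, using simplicial vertices and the fact that minimal separators are cliques (Lemma~\ref{chordal-1}).

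\emph{(iii)$\Rightarrow$(i).} This is the main obstacle. Given $u,v,w,x$, we would induct on $|V|$ by deleting a simplicial vertex $s\notin\{u,v,w,x\}$. Distance heredity keeps all pairwise distances unchanged in $G-s$, and $G-s$ remains chordal and distance hereditary. This reduces to graphs in which every simplicial vertex lies among the four. A chordal graph that is not complete has two non-adjacent simplicial vertices, so the reduced graph has few leaves in its clique tree and is essentially a path of cliques. There we verify the inequality directly using the additive structure of distances along the clique path, in the manner of a "metric tree of cliques". The inequality on a path-like metric reduces to Ptolemy's inequality for four collinear points, where it holds with equality or trivially.
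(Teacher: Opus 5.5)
The paper gives no proof of this lemma; it just cites Howorka. Your (i)$\Rightarrow$(ii) (the gem and $C_4$ computations) and (ii)$\Rightarrow$(iii) (via the Bandelt--Mulder forbidden subgraphs) are fine. The genuine gap is in the only substantial direction, (iii)$\Rightarrow$(i), which is not proved.

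Deleting simplicial vertices outside $\{u,v,w,x\}$ does preserve distances. But it only shows that the clique tree of the reduced graph has at most four leaves. That tree need not be a path: the net (a triangle $xyz$ with pendant vertices $x',y',z'$) is already reduced for the quadruple $\{x,x',y',z'\}$, and its clique tree is a star with three leaves. The reduced graph can also be arbitrarily large: replace each vertex of a long path by two adjacent twins, and only four simplicial vertices remain. More seriously, your final claim that distances along a path of cliques behave like four collinear points is false, and distance heredity is never used there, although it has to be. The gem with quadruple $\{b,c,d,e\}$ has no simplicial vertex outside the quadruple, and its clique tree $\{a,b,c\}-\{a,c,d\}-\{a,d,e\}$ is a path. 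Yet it violates the inequality, as you computed yourself. Even Ptolemaic examples are not collinear: in the diamond the two middle vertices are adjacent to each other and to both ends of the diagonal, so on a line they would coincide.

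A route that works is to replace simplicial vertices by the Bandelt--Mulder pruning theorem, from the same source you already use. Every distance-hereditary graph on at least two vertices has a pendant vertex or a pair of twins, and deleting such a vertex leaves a connected, chordal, distance-hereditary graph with unchanged distances. Induct on $|V|$, allowing repeated points, for which the inequality is trivial. The cases are:
\begin{itemize}
\item A pendant vertex or twin outside the quadruple: delete it.
\item A pendant vertex $y$ with neighbour $z$ inside the quadruple: replace $y$ by $z$. Since $d(y,t)=d(z,t)+1$ for all $t\neq y$, each of the three inequalities for $y$ follows from the corresponding one for $z$ plus a triangle inequality among the other three points.
\item Both twins $y,y'$ inside the quadruple $\{y,y',v,x\}$: the only nontrivial inequality is $d(y,y')\,d(v,x)\le 2\,d(y,v)\,d(y,x)$. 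For true twins it follows from $d(v,x)\le d(y,v)+d(y,x)$. For false twins, chordality forces $N(y)=N(y')$ to be a clique (otherwise an induced $C_4$ appears), and this gives $d(v,x)\le d(y,v)\,d(y,x)$.
\end{itemize}

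There is also a smaller gap in (i)$\Rightarrow$(ii). A shortest hole need not be isometric in $G$ (take the rim of the wheel $W_6$), so you must name the violating quadruple. If the hole $c_0c_1\cdots c_{n-1}$ is isometric, use $c_0,c_1,c_m,c_{m+1}$ with $m=\lfloor n/2\rfloor$. Otherwise choose a pair, relabelled $c_0,c_k$, with $d_G(c_0,c_k)<k=d_C(c_0,c_k)$ and $k$ minimal over all pairs of the hole, so $k\ge 3$. Then use $c_0,c_1,c_{k-1},c_k$, which gives $(k-1)^2>1+(k-1)(k-2)$.
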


  A \textit{block } of a given graph is a maximal connected subgraph that has no cut vertex (a $mvs$ of cardinality one is also called a cut vertex). A  graph  is called a
\textit{block graph}  if all of its blocks are cliques. They are also the Ptolemaic graphs  in which every two vertices at distance  two from each other are connected by a unique shortest path.

\begin{lemma}\label{lem:block}
The following assertions are equivalent:
 \begin{enumerate}
     \item[(i)] $G$ is a block graph;
     \item[(ii)] $G$ is a chordal graph that is diamond-free;
     \item[(iii)] $G$ is a chordal graph and any minimal vertex separator is an unitary set.
 \end{enumerate}
\end{lemma}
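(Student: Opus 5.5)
I will prove the cycle of implications (i) $\Rightarrow$ (ii) $\Rightarrow$ (iii) $\Rightarrow$ (i). Throughout I use Lemma~\ref{chordal-1} and the Blair--Peyton description of minimal vertex separators as intersections $Q'\cap Q''$ of cliques adjacent in a clique-tree.

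\emph{(i) $\Rightarrow$ (ii).} Let $G$ be a block graph. Every cycle of $G$ lies inside a single block, since a cycle has no cut vertex. Each block is a clique, so every cycle of length at least four has a chord, and $G$ is chordal. Now suppose $G$ contains an induced diamond with vertices $a,b,c,d$, where $a,b$ are the nonadjacent pair. The diamond is $2$-connected, so it lies inside one block. That block is a clique, which forces $a\sim b$, a contradiction. Hence $G$ is diamond-free.

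\emph{(ii) $\Rightarrow$ (iii).} Suppose $G$ is chordal and some minimal vertex separator $S$ satisfies $|S|\ge 2$; I will produce an induced diamond. Choose $x,y\in S$; by Lemma~\ref{chordal-1}, $x\sim y$. Since $S$ is a minimal $uv$-separator, the components $C_u$ and $C_v$ of $G-S$ containing $u$ and $v$ are \emph{full} components, meaning every vertex of $S$ has a neighbour in each of them. This fullness is the standard consequence of minimality: if some $s\in S$ had no neighbour in $C_u$, then $S\setminus\{s\}$ would still separate $u$ from $v$.

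The main step is to find a single vertex $a\in C_u$ adjacent to both $x$ and $y$. Take a shortest path from $x$ to $y$ whose interior lies in $C_u$; it exists because $C_u$ is connected and full. Closing this path with the edge $xy$ gives a cycle. If the interior had at least two vertices, this cycle would have length at least four. Chordality then supplies a chord, and since the path is shortest, that chord must join $x$ or $y$ to an interior vertex, giving a shorter such path. So the interior consists of exactly one vertex $a$. In the same way there is $b\in C_v$ adjacent to both $x$ and $y$. Finally $a\not\sim b$, because $S$ separates $C_u$ from $C_v$. Thus $\{a,x,y,b\}$ induces a diamond, contradicting (ii), and every minimal vertex separator is a single vertex.

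\emph{(iii) $\Rightarrow$ (i).} Let $B$ be a block of $G$ and suppose $B$ is not a clique, so it contains nonadjacent vertices $u,v$. Any minimal $uv$-separator in $G$ has the form $\{w\}$ by (iii). Since $B$ is $2$-connected, $B-w$ is connected, so $u$ and $v$ remain connected in $G-w$. This contradicts $\{w\}$ being a $uv$-separator, so every block is a clique.

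The step that needs the most care is (ii) $\Rightarrow$ (iii): the fullness of the components and the existence of a common neighbour $a$ of $x$ and $y$ in $C_u$. The remaining implications follow directly from the definitions.
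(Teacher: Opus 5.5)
Your proof is correct. The paper gives no proof of this lemma. It is recalled in the preliminaries as a standard characterization of block graphs, with neither an argument nor a specific reference. So there is no route to compare against, and your cycle (i)$\Rightarrow$(ii)$\Rightarrow$(iii)$\Rightarrow$(i) is a complete, self-contained justification.

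The substantive step, (ii)$\Rightarrow$(iii), is handled properly. Fullness of $C_u$ and $C_v$ follows from minimality exactly as you say. A shortest $x$--$y$ path through $C_u$, closed by the edge $xy$, must then have a single interior vertex by chordality. The resulting set $\{a,x,y,b\}$ is an induced diamond.

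A few small tidy-ups:
\begin{enumerate}
\item[(1)] In (ii)$\Rightarrow$(iii), require the path to have at least one interior vertex. Otherwise the edge $xy$ itself, with empty interior, is trivially the shortest path ``whose interior lies in $C_u$''.
\item[(2)] In the same step, say explicitly that chords joining two interior vertices are also excluded by minimality. Then every possible chord yields a shorter path, not only those incident to $x$ or $y$.
\item[(3)] In (iii)$\Rightarrow$(i), note that a minimal $uv$-separator exists at all. For instance, $V\setminus\{u,v\}$ separates the nonadjacent pair $u,v$, so it can be shrunk to a minimal one.
\item[(4)] The Blair--Peyton clique-tree description you announce at the start is never used. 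Chordality is also not needed in (iii)$\Rightarrow$(i): your $2$-connectivity argument works for any connected graph.
\end{enumerate}
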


The class of \textit{complement reducible graphs}, known as \textit{cographs}, is the
hereditary class of $P_4$-free graphs.

Since by definition, chordal graphs are $C_4$-free graphs,
the class of chordal cographs is the hereditary class of $\{C_4,P_4\}$-free graphs
whose elements are often called \textit{\textit{quasi}-threshold graphs} (\cite{YanChen1996})
and also constitute a subclass of Ptolemaic graphs.

\subsection*{Distance matrix and its spectral properties}
We next present some properties of the distance matrix of a graph that will be
used in the structural part of our characterization of graphs satisfying
$\lambda_2 < -1/2$.

Let $V=\{v_1,\ldots,v_n\}$. The \textit{distance matrix} of the graph
$G=(V,E)$ is the $n\times n$ matrix $\mathbf{D}(G)=[d_G(v_i,v_j)]$.
The eigenvalues of $\mathbf{D}(G)$ are called the \textit{distance eigenvalues}
of $G$. Since $\mathbf{D}(G)$ is symmetric, all its eigenvalues are real and may
be ordered as \
$
\lambda_1(G) \ge \lambda_2(G) \ge \cdots \ge \lambda_n(G).
$ \
Thus, $\lambda_2(G)$ denotes the second largest distance eigenvalue of $G$.

 \begin{lemma}[Cauchy's Interlace Theorem]\label{lem:interl} Let $\mathbf{A}$ be a symmetric matrix of order $n$,
and let $\mathbf{B}$ be a principal submatrix of $A$ of  order $m$. If $\lambda_1(A) \geq  \lambda_2(A) \geq  \ldots \geq  \lambda_n(A)$
lists the eigenvalues of $\mathbf{A}$ and $\lambda_1(B) \geq  \lambda_2(B) \geq \ldots \geq  \lambda_m(B)$ are the eigenvalues of $\mathbf{B}$
then
\[\lambda_{i} (A) \geq \lambda_i (B) \geq  \lambda_{n-m+i} (A), \] for $i = 1 \ldots m$.

 \end{lemma}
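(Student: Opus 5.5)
The plan is to derive Cauchy's Interlace Theorem from the Courant--Fischer min--max characterization of the eigenvalues of a real symmetric matrix. Reorder rows and columns simultaneously by a permutation matrix (a similarity, so the spectrum is unchanged). Then we may assume $\mathbf{B}$ is the leading $m\times m$ principal submatrix of $\mathbf{A}$, so $\mathbf{B}=P^{T}\mathbf{A}P$ with $P=\begin{pmatrix} I_m\\ 0\end{pmatrix}$. The map $y\mapsto Py$ is an isometric embedding of $\mathbb{R}^m$ onto the coordinate subspace $W=\mathrm{span}\{e_1,\ldots,e_m\}$, and it satisfies $y^{T}\mathbf{B}y=(Py)^{T}\mathbf{A}(Py)$.

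For the upper bound $\lambda_i(\mathbf{A})\ge\lambda_i(\mathbf{B})$, use the max--min form
\[
\lambda_i(\mathbf{A})=\max_{\dim U=i}\ \min_{0\neq x\in U}\frac{x^{T}\mathbf{A}x}{x^{T}x}.
\]
Take an $i$-dimensional subspace $U'\subseteq\mathbb{R}^m$ attaining the maximum for $\lambda_i(\mathbf{B})$. Then $PU'$ is an $i$-dimensional subspace of $\mathbb{R}^n$ on which the Rayleigh quotients of $\mathbf{A}$ coincide with those of $\mathbf{B}$ on $U'$. Hence $\lambda_i(\mathbf{A})\ge\lambda_i(\mathbf{B})$.

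For the lower bound $\lambda_i(\mathbf{B})\ge\lambda_{n-m+i}(\mathbf{A})$, use the dual min--max form
\[
\lambda_k(\mathbf{A})=\min_{\dim U=n-k+1}\ \max_{0\neq x\in U}\frac{x^{T}\mathbf{A}x}{x^{T}x}.
\]
With $k=n-m+i$ we get $n-k+1=m-i+1$. Take an $(m-i+1)$-dimensional subspace $U'\subseteq\mathbb{R}^m$ attaining the minimum for $\lambda_i(\mathbf{B})$, namely the span of the eigenvectors of $\mathbf{B}$ for $\lambda_i(\mathbf{B}),\ldots,\lambda_m(\mathbf{B})$. Its image $PU'$ has dimension $m-i+1=n-k+1$, and the maximum of the Rayleigh quotient of $\mathbf{A}$ over it equals $\lambda_i(\mathbf{B})$. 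Therefore $\lambda_{n-m+i}(\mathbf{A})\le\lambda_i(\mathbf{B})$.

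The only step needing care is the Courant--Fischer theorem itself, which I take as standard. If a self-contained argument were wanted, the key point is a dimension count. Any $i$-dimensional subspace meets the span of the eigenvectors for $\lambda_i,\ldots,\lambda_n$ nontrivially, since $i+(n-i+1)>n$. On that intersection the Rayleigh quotient is at most $\lambda_i$, while on the span of the top $i$ eigenvectors it is at least $\lambda_i$. The main bookkeeping obstacle is matching the dimensions $n-k+1=m-i+1$ in the lower bound, which determines exactly the shift $n-m$ in the index.
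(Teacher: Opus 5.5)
Your proof is correct. Because the Rayleigh quotients of $\mathbf{A}$ on $PU'$ coincide with those of $\mathbf{B}$ on $U'$, the max--min form gives $\lambda_i(\mathbf{A})\ge\lambda_i(\mathbf{B})$. The min--max form with $k=n-m+i$ gives $\lambda_{n-m+i}(\mathbf{A})\le\lambda_i(\mathbf{B})$, and this index is legitimate since $k$ lies in $\{n-m+1,\ldots,n\}$.

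The paper does not prove this lemma. It quotes it as the classical Cauchy interlacing theorem and uses it only in Remark~\ref{rem:interl} for distance matrices, which are real symmetric, so your real-symmetric setting is exactly what is needed. Your Courant--Fischer argument is the standard textbook route, so there is no competing proof in the paper to compare it with.

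You could shorten it by obtaining the lower bound from the upper bound applied to $-\mathbf{A}$. Since $-\mathbf{B}$ is a principal submatrix of $-\mathbf{A}$, and $\lambda_j(-\mathbf{A})=-\lambda_{n-j+1}(\mathbf{A})$, $\lambda_j(-\mathbf{B})=-\lambda_{m-j+1}(\mathbf{B})$, the inequality $\lambda_j(-\mathbf{A})\ge\lambda_j(-\mathbf{B})$ with $i=m-j+1$ becomes $\lambda_i(\mathbf{B})\ge\lambda_{n-m+i}(\mathbf{A})$. This way only the max--min form of Courant--Fischer is needed.
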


\begin{remark}\label{rem:interl}
Since Ptolemaic graphs are distance hereditary, the distance matrix of every connected induced subgraph $H$ of a Ptolemaic graph $G$ is a principal submatrix of $\mathbf{D}(G)$. This also  holds if $H$ is an  induced subgraph  with  $\mathrm{diam}(H) \leq 2$ of a general   graph, not necessarily a distance hereditary graph.
  In both cases,  by Lemma \ref{lem:interl}, we have \[\lambda_1(G) \geq \lambda_1(H) \geq \lambda_2(G) \geq \lambda_2(H).\]
Parti\-cularly, if $G$ is a
graph with $\lambda_2(G) < -1/2$,  $G$ is free of subgraphs $H$  such that $\lambda_2(H) >  -1/2$.

\end{remark}

 \section{Known results on graphs for which $\lambda_2 <-1/2$}

Some known results were very important for the development  of our work and are here established as lemmas.
The first of these results, due to Guo and Zhou \cite{guo2024graphs}, actually  inspires our research.

 \begin{lemma}[\cite{guo2024graphs}, Theorem 3.1]\label{lem:guochordal}
    If a  graph $G$ satisfies $\lambda_2(G) <-1/2$ then $G$ is a chordal graph.
\end{lemma}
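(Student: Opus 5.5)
Since $\lambda_2(G)<-1/2$ is inherited by every induced subgraph whose distance matrix is a principal submatrix of $\mathbf{D}(G)$, the plan is to show that a non-chordal graph contains such a subgraph with $\lambda_2\ge -1/2$. Suppose $G$ is not chordal. Then $G$ has an induced cycle $C_k$ with $k\ge 4$; take one of \emph{minimum} length $k$. The induced cycle is not, in general, isometric in $G$, so Remark~\ref{rem:interl} does not apply to it directly. I would split into two cases.

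\textbf{Case $k=4$ or $k=5$.} Then $\mathrm{diam}(C_k)=2$. By Remark~\ref{rem:interl}, any induced subgraph of diameter at most $2$ has its distance matrix as a principal submatrix of $\mathbf{D}(G)$. Hence $\lambda_2(G)\ge\lambda_2(C_k)$. The matrix $\mathbf{D}(C_k)$ is circulant, so its eigenvalues are explicit.
For $C_4$, the first row is $(0,1,2,1)$ and the spectrum is $\{4,0,0,-4\}$, so $\lambda_2=0$.
For $C_5$, the first row is $(0,1,2,2,1)$ and the eigenvalues are $2\cos(2\pi j/5)+4\cos(4\pi j/5)$. For $j=1$ this gives $2\cos 72^\circ+4\cos 144^\circ<0$; for $j=2$ it gives $2\cos 144^\circ+4\cos 288^\circ = 2\cos 144^\circ+4\cos 72^\circ\approx -1.618+1.236=-0.382$. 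Hence $\lambda_2(C_5)\approx-0.382>-1/2$.
In both cases $\lambda_2(G)\ge -1/2$, a contradiction.

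\textbf{Case $k\ge 6$.} Here I would find a smaller witness of diameter at most $2$ rather than use the whole cycle. Take an induced path $P_4=v_1v_2v_3v_4$ on consecutive vertices of the cycle. It is not automatically isometric in $G$: a shortcut $v_1$--$w$--$v_4$ may exist. The cleanest route is to pick a \emph{shortest} cycle among all induced cycles of length $\ge 4$, so that no such shortcut exists; otherwise the shortcut combined with the cycle would produce a shorter chordless cycle of length $\ge4$ (a short case check: $w$ adjacent to $v_1,v_4$ and at most to the neighbours among the cycle vertices).

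Alternatively, the whole argument can be phrased through a vector test. Take $\mathbf{x}$ supported on a few cycle vertices, orthogonal to a suitable vector, with $\mathbf{x}^T(\mathbf{D}+\tfrac12 I)\mathbf{x}\ge 0$ on a $2$-dimensional subspace. Courant--Fischer then gives $\lambda_2\ge -1/2$. For instance, use $\mathbf{x}=e_{a}-e_{b}$ and $\mathbf{y}=e_{c}-e_{d}$ with pairs chosen at large distance. The only distances needed are between four chosen vertices, and these are bounded below by $\min(\text{cycle distance},\,\cdot)$ using minimality of $k$.

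\textbf{Expected obstacle.} The main obstacle is controlling the distances in $G$ between vertices of a long induced cycle, since only lower bounds are available without isometry. Minimality of the cycle gives that distances $\le 2$ along the cycle are preserved. I would therefore use four vertices $v_1,v_2,v_4,v_5$, or the path $P_4$ plus Courant--Fischer, whose quadratic form depends only on such short distances. The final step is a $4\times4$ determinant or characteristic polynomial check that the resulting matrix has two eigenvalues $\ge -1/2$.
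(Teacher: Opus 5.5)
The paper gives no proof of this lemma (it is quoted from Guo and Zhou), so your argument has to stand on its own. Your case $k\in\{4,5\}$ is correct, up to a slip: $\mathbf{D}(C_4)$ has spectrum $\{4,0,-2,-2\}$, not $\{4,0,0,-4\}$, though $\lambda_2(C_4)=0$ either way. The case $k\ge 6$, however, is not proved, and none of the routes you sketch can be completed.

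(a) An isometric $P_4$ is useless as a witness, since $\lambda_2(P_4)=\sqrt2-2\approx-0.586<-1/2$. In fact every path is a loose block graph, so by Lemma~\ref{lem:block-graph} no isometric path can ever give a contradiction. The witness must use the fact that the cycle closes up.

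(b) The vector test with $\mathbf{x}=e_a-e_b$, $\mathbf{y}=e_c-e_d$ cannot work. Here $\mathbf{x}^T(\mathbf{D}+\tfrac12 I)\mathbf{x}=1-2d(a,b)<0$, so the form is already negative on the spanning vectors.

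(c) Your claim that minimality of $k$ rules out shortcuts is false. In the wheel $K_1\oplus C_k$ with $k\ge 6$, the rim is the only induced cycle of length at least $4$, yet the hub gives $d(v_1,v_4)=2$. In that graph $v_1,v_2,v_4,v_5$ have all cross distances equal to $2$, and their distance submatrix has spectrum $\{5,-1,-1,-3\}$. Even when the cycle is isometric and $k\ge 8$, the same four vertices give $\lambda_2=\sqrt5-3\approx-0.764$. So the concluding $4\times4$ check fails.

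What is missing is a witness whose distances in $G$ are actually controlled. One way to close the gap: let $C$ be a shortest induced cycle of length at least $4$ (a hole), and suppose its length is $k\ge 6$.

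(i) Suppose $x\notin C$ has two neighbours on $C$ joined by an arc of length $\ell$, with $2\le \ell\le k-3$, containing no other neighbour of $x$. Then $x$ and that arc form a shorter hole. Hence $N(x)\cap C$ is empty, a vertex, an edge, a path on three vertices, or all of $C$. In the last case $x$ together with four consecutive vertices of $C$ induces a gem ($F_{11}$), which has diameter $2$ and $\lambda_2=(\sqrt5-3)/2>-1/2$.

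(ii) If no vertex sees all of $C$, then $C$ is isometric. Otherwise pick $u,v\in C$ with $m=d_G(u,v)<\ell=d_C(u,v)$, with $m$ minimal among such pairs. A shortest $u$--$v$ path then has all interior vertices off $C$, and $m\ge 3$ by (i). Together with the two arcs of $C$ it forms cycles of lengths $m+\ell$ and $m+k-\ell$, both less than $k$. Their vertex sets therefore induce hole-free Hamiltonian graphs, i.e.\ $2$-connected chordal graphs. Since a $2$-connected chordal graph on $n$ vertices has at least $2n-3$ edges, the two cycles together need at least $2m+k-6$ chords. All of these are edges from the $m-1$ interior vertices to $C\setminus\{u,v\}$, and by (i) there are at most $3m-5$ such edges. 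This forces $m\ge k-1$, contradicting $m<k/2$.

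(iii) Now $\mathbf{D}(C_k)$ is a principal submatrix of $\mathbf{D}(G)$. The circulant eigenvalues give $\lambda_2(C_k)=0$ for even $k$ and $\lambda_2(C_k)=-\tfrac14\sec^2(\pi/k)>-\tfrac12$ for odd $k$, so $\lambda_2(G)\ge -1/2$.
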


\begin{figure}[h]
\begin{center}
\begin{tikzpicture} [scale=.25,auto=left]
 \tikzstyle{node}=[circle, draw, fill=black,
                        inner sep=1pt, minimum width=3pt]

\node[node] (a) at (0,24) {};
\node[node] (b) at (2,24) {};
\node[node] (c) at (4,24) {};
\node[node] (d) at (6,24) {};
\node[node] (e) at (1,26) {};
\node[node] (f) at (3,26) {};

\node at (3,22) [draw=none,fill=none] {$F_1$};

\node[node] (g) at (9,24) {};
\node[node] (h) at (11,24) {};
\node[node] (i) at (13,24) {};
\node[node] (j) at (15,24) {};
\node[node] (k) at (11,26) {};
\node[node] (l) at (13,26) {};

\node at (12,22) [draw=none,fill=none] {$F_2$};

\node[node] (m) at (18,24) {};
\node[node] (n) at (20,24) {};
\node[node] (o) at (22,24) {};
\node[node] (p) at (24,24) {};
\node[node] (q) at (26,24) {};
\node[node] (r) at (22,26) {};

\node at (24,22) [draw=none,fill=none] {$F_3$};

\node[node] (s) at (29,24) {};
\node[node] (t) at (31,24) {};
\node[node] (u) at (33,24) {};
\node[node] (v) at (35,24) {};
\node[node] (x) at (37,24) {};
\node[node] (y) at (39,24) {};
\node[node] (z) at (31,26) {};

\node at (34,22) [draw=none,fill=none] {$F_4$};

\node[node] (a1) at (0,16) {};
\node[node] (b1) at (0,20) {};
\node[node] (c1) at (2,18) {};
\node[node] (d1) at (4,18) {};
\node[node] (e1) at (6,20) {};
\node[node] (f1) at (6,16) {};

\node at (3,14) [draw=none,fill=none] {$F_5$};

\node[node] (g1) at (9,16) {};
\node[node] (h1) at (9,20) {};
\node[node] (i1) at (11,16) {};
\node[node] (j1) at (11,20) {};
\node[node] (k1) at (15,16) {};
\node[node] (l1) at (15,20) {};
\node[node] (m1) at (13,18) {};

\node at (12,14) [draw=none,fill=none] {$F_6$};

\node[node] (n1) at (18,14) {};
\node[node] (o1) at (18,20) {};
\node[node] (p1) at (22,14) {};
\node[node] (q1) at (22,20) {};
\node[node] (r1) at (20,17) {};
\node[node] (s1) at (23,17) {};
\node[node] (t1) at (26,17) {};

\node at (24,14) [draw=none,fill=none] {$F_7$};

\node[node] (u1) at (30,16) {};
\node[node] (v1) at (30,20) {};
\node[node] (x1) at (32,18) {};
\node[node] (y1) at (34,16) {};
\node[node] (w1) at (34,20) {};
\node[node] (z1) at (37,19) {};
\node[node] (zz1) at (37,21) {};

\node at (34,14) [draw=none,fill=none] {$F_8$};

\node[node] (a2) at (0,8) {};
\node[node] (b2) at (2,6) {};
\node[node] (c2) at (2,10) {};
\node[node] (d2) at (4,8) {};
\node[node] (e2) at (6,8) {};

\node at (3,4) [draw=none,fill=none] {$F_9$};

\node[node] (f2) at (9,8) {};
\node[node] (g2) at (11,6) {};
\node[node] (h2) at (11,10) {};
\node[node] (i2) at (13,8) {};
\node[node] (j2) at (15,8) {};
\node[node] (k2) at (17,8) {};

\node at (12,4) [draw=none,fill=none] {$F_{10}$};

\node[node] (l2) at (22,6) {};
\node[node] (m2) at (26,6) {};
\node[node] (n2) at (20,10) {};
\node[node] (o2) at (24,10) {};
\node[node] (p2) at (28,10) {};

\node at (24,4) [draw=none,fill=none] {$F_{11}$};

\node[node] (q2) at (31,8) {};
\node[node] (r2) at (33,6) {};
\node[node] (s2) at (33,10) {};
\node[node] (t2) at (36,6) {};
\node[node] (u2) at (36,10) {};

\node at (34,4) [draw=none,fill=none] {$F_{12}$};

\node[node] (a3) at (0,0) {};
\node[node] (b3) at (2,-2) {};
\node[node] (c3) at (2,2) {};
\node[node] (d3) at (5,-2) {};
\node[node] (e3) at (5,2) {};
\node[node] (f3) at (7,0) {};

\node at (3,-4) [draw=none,fill=none] {$F_{13}$};

\foreach \from/\to in {a/b,b/c,c/d,b/e,b/f,
g/h,h/i,i/j,h/k,i/l,
m/n,n/o,o/p,p/q,o/r,
s/t,t/u,u/v,v/x,x/y,t/z,
a1/b1,a1/c1,b1/c1,c1/d1,d1/e1,d1/f1,g1/i1,h1/j1,i1/j1,i1/m1,j1/m1,m1/k1,m1/l1,
n1/r1,n1/p1,o1/r1,o1/q1,p1/r1,q1/r1,r1/s1,s1/t1,
u1/x1,u1/v1,v1/x1,x1/y1,x1/w1,y1/w1,w1/z1,w1/zz1,
a2/b2,a2/c2,b2/c2,b2/d2,c2/d2,d2/e2,
f2/h2,f2/g2,f2/i2,g2/i2,h2/i2,i2/j2,j2/k2,
l2/m2,l2/n2,l2/o2,m2/o2,m2/p2,n2/o2,o2/p2,
q2/r2,q2/s2,r2/s2,r2/t2,r2/u2,s2/t2,s2/u2,
a3/b3,a3/c3,b3/c3,b3/d3,b3/e3,c3/d3,c3/e3,d3/e3,d3/f3,e3/f3}
\draw (\from) -- (\to);

 \end{tikzpicture}
\vskip -.5cm
\caption{Forbidden subgraphs for graphs satisfying $\lambda_2 <-1/2$ \cite{guo2024graphs}.}
\label{fig:grande}
\end{center}
\end{figure}
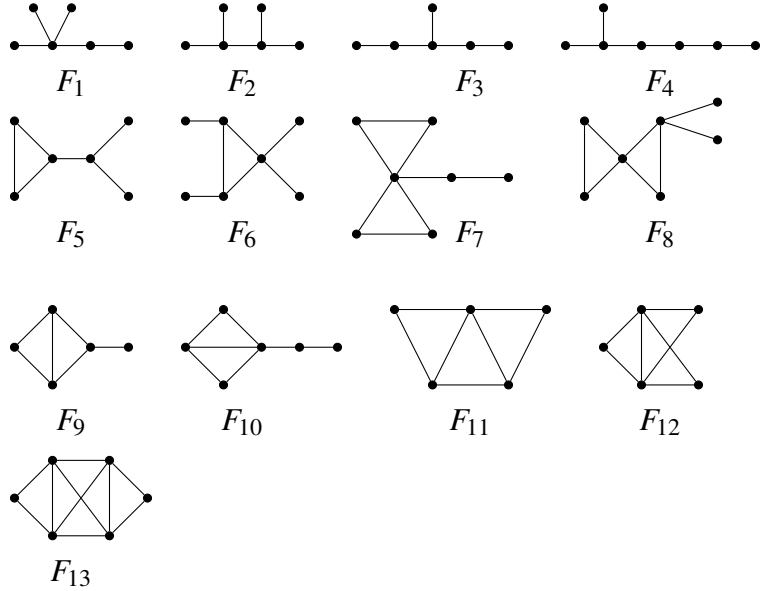

Several graphs, depicted in Figure \ref{fig:grande},
were  identified as  forbidden subgraphs of graphs satisfying $\lambda_2 <-1/2$ in  \cite{guo2024graphs}.
In  the same paper,  the split graphs satisfying this condition     were characterized, see Theorem \ref{theo:split}.
\smallskip

A \textit{block star} is a block graph whose all blocks contain a
common vertex.
A block graph $G$ is \textit{loose} if $\mu(S)=1$ for every $mvs$ $S \in {\mathbb S}$.
Let $BG(p, q, 3, 2, 2)$ with $p, q \geq 2$ and $BGA$ be the two block graphs  shown in Figure \ref{fig:block}. Another result, also important for our work,  characterize the  block graphs satisfying $\lambda_2 <-1/2$.

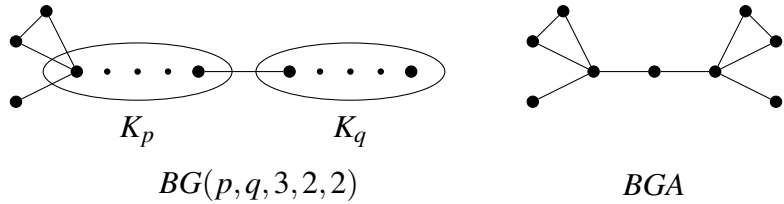
\begin{figure}[h]
\begin{center}
\begin{tikzpicture} [scale=.20,auto=left]
\tikzstyle{block} = [ellipse, draw,
text width=3.5em, text centered,  minimum height=1.8em]
 \tikzstyle{node}=[circle, draw, fill=black,
                        inner sep=1.5pt, minimum width=4pt]
 \tikzstyle{nodesm}=[circle, draw, fill=black,
                        inner sep=.7pt, minimum width=1pt]

\node[node] (a) at (0,4) {};
\node[node] (b) at (0,8) {};
\node[node] (c) at (2,10) {};
\node[node] (d) at (4,6) {};
\node[nodesm] (e) at (6,6) {};
\node[nodesm] (f) at (8,6) {};
\node[nodesm] (g) at (10,6) {};
\node[node] (h) at (12,6) {};

\node[block] (K1)    at (8,6) {};
\node at (8,2) [draw=none,fill=none] {$K_p$};

\node[node] (i) at (18,6) {};
\node[nodesm] (j) at (20,6) {};
\node[nodesm] (k) at (22,6) {};
\node[nodesm] (l) at (24,6) {};
\node[node] (m) at (26,6) {};

\node at (22,2) [draw=none,fill=none] {$K_q$};
\node[block] (K2)    at (22,6) {};

\node at (16,-1.5) [draw=none,fill=none] {$BG(p,q,3,2,2)$};

\node[node] (n) at (34,4) {};
\node[node] (o) at (34,8) {};
\node[node] (p) at (36,10) {};
\node[node] (q) at (38,6) {};
\node[node] (r) at (42,6) {};
\node[node] (s) at (46,6) {};
\node[node] (t) at (50,4) {};
\node[node] (u) at (50,8) {};
\node[node] (v) at (48,10) {};

\node at (42,-1.5) [draw=none,fill=none] {$BGA$};

\foreach \from/\to in {a/d,b/d,b/c,c/d,h/i,
n/q,o/q,o/p,p/q, q/r,r/s,s/u,s/v,s/t,u/v}
\draw (\from) -- (\to);

 \end{tikzpicture}
\caption{Graphs $BG(p,q,3,2,2)$ and $BGA$ of Lemma \ref{lem:block-graph}.}
\label{fig:block}
\end{center}
\end{figure}


\begin{lemma}\cite{Xue-Lin-Shu-block}\label{lem:block-graph}
  Let $G$ be a block graph. Then $\lambda_2(G) <-1/2$
 if and only if
   $G$ is a block star, or
  $G$ is a loose block graph, or
$G$ is a non trivial connected induced subgraph of $BG(p, q, 3, 2, 2)$, or
$G$ is a non trivial connected induced subgraph of $BGA$.
 \end{lemma}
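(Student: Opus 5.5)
The plan is to prove the two directions separately, using throughout that block graphs are Ptolemaic and hence distance hereditary. By Remark~\ref{rem:interl}, every connected induced subgraph $H$ of a block graph $G$ then satisfies $\lambda_2(H)\le\lambda_2(G)$. This gives a forbidden-subgraph mechanism for necessity. It also reduces sufficiency to checking the ``maximal'' members of each family: once $BG(p,q,3,2,2)$ and $BGA$ satisfy $\lambda_2<-1/2$, so do all their nontrivial connected induced subgraphs.

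\emph{Sufficiency.} I would treat the families one by one.

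\begin{enumerate}
\item[(a)] For a block star with blocks $K_{n_1},\dots,K_{n_k}$ sharing the vertex $c$, write $\mathbf{D}=2(J-I)-\mathbf{A}$. For each block, vectors supported on $K_{n_i}\setminus\{c\}$ with zero sum are eigenvectors with eigenvalue $-1$. The remaining eigenvalues come from the $(k+1)\times(k+1)$ equitable quotient matrix. I would show that this quotient has exactly one eigenvalue $\ge -1/2$. The natural route is to evaluate its characteristic polynomial at $-1/2$ and use its sign pattern, or to apply Descartes' rule of signs after the shift $x\mapsto x-1/2$.
\item[(b)] For loose block graphs (every cut vertex lies in exactly two blocks), I would argue by induction on the number of blocks. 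Removing a leaf block $K_m$ attached at a cut vertex $v$ produces a smaller loose block graph. Write $\mathbf{D}(G)$ as a bordered matrix; since distances to the new vertices equal distances to $v$ plus one, the border is rank-structured. A Schur-complement computation of $\det(\mathbf{D}(G)+\tfrac12 I)$ in terms of the smaller graph should show that the inertia of $\mathbf{D}+\tfrac12 I$ stays at one positive eigenvalue.
\item[(c)] For $BG(p,q,3,2,2)$ and $BGA$, I would again use equitable partitions to reduce to small quotient matrices whose entries depend on $p$ and $q$, together with the trivial eigenvalue $-1$. I would then verify, via Sturm sequences or sign evaluation at $-1/2$, that the second root of the quotient polynomial is below $-1/2$ for all $p,q\ge 2$.
\end{enumerate}

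\emph{Necessity.} Let $G$ be a block graph with $\lambda_2(G)<-1/2$ that is neither a block star nor loose. Then some cut vertex $v$ lies in at least three blocks, and $G$ is not a block star, so some block not containing $v$ exists. Take a shortest such configuration and extract small induced subgraphs from it, for example a claw-like center with a path of length two hanging off, or two cut vertices each of multiplicity at least two at distance at least two. I would show that each such subgraph is one of the graphs $F_i$ of Figure~\ref{fig:grande} or has $\lambda_2>-1/2$ by direct computation. This forces $G$ to have diameter at most $4$ and at most two ``heavy'' cut vertices. A case analysis on their distance (adjacent, or at distance $2$ through a single connecting block) then shows that $G$ embeds as an induced subgraph in $BG(p,q,3,2,2)$ or in $BGA$.

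The main obstacle is this last case analysis. One must produce a complete finite list of minimal forbidden block graphs and verify that excluding them leaves exactly the stated families. I would first enumerate block graphs on at most $7$ vertices with a cut vertex of multiplicity at least $2$ that are not block stars, compute $\lambda_2$ numerically to identify the minimal obstructions, and then prove by induction on the number of blocks that any larger graph contains one of them.
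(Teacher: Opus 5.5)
The paper only quotes this lemma from \cite{Xue-Lin-Shu-block}, so your proposal has to stand on its own, and it has genuine gaps.

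\textbf{Necessity.} This direction carries the real content of the lemma, and the proposal gives no argument for it. It offers only a plan: enumerate small cases by computer, then ``prove by induction'' that every larger graph contains an obstruction. Worse, the sample obstructions you name are not obstructions. Write $T(a_1,\dots,a_k)$ for the spider whose legs have lengths $a_i$.
\begin{itemize}
\item The minimal configuration your hypotheses produce is a vertex in three blocks plus one block avoiding it, e.g.\ a claw with one leg lengthened. This is admissible: $T(1,1,2)$ and $T(1,1,3)$ are induced subgraphs of $BG(2,2,3,2,2)$, and a direct computation gives $\lambda_2(T(1,1,2))\approx-0.512$.
\item Two cut vertices of multiplicity at least $2$ (each in three blocks) at distance two is exactly the shape of $BGA$. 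Your part (c) asserts that $BGA$ satisfies $\lambda_2<-1/2$.
\item Two vertices at distance $2$ can never be joined ``through a single block'' in a block graph.
\end{itemize}
What is actually required has three parts.
\begin{itemize}
\item[(i)] The complete list of minimal block graphs outside the four families. It contains the block graphs $F_1,\dots,F_8$ of Figure~\ref{fig:grande}: e.g.\ $F_1=T(1,1,1,2)$, $F_3=T(1,2,2)$, $F_4=T(1,1,4)$, and $F_2$, which has two adjacent vertices each in three blocks. It also contains graphs not in that figure. For example, take $K_4$ with a pendant edge and a pendant path of length two at the same vertex: it lies outside all four families, contains no $F_i$, and has two distance eigenvalues above $-1/2$.
\item[(ii)] A proof that each graph on the list has $\lambda_2>-1/2$.
\item[(iii)] A structural argument that a block graph which is neither a block star nor loose, and avoids the whole list, embeds in $BG(p,q,3,2,2)$ or $BGA$. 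Such a graph has every vertex in at most three blocks, at most two such vertices (and then at distance exactly two), and restrictions on the sizes of the blocks around them.
\end{itemize}
None of (i)--(iii) is in the proposal.

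\textbf{Sufficiency, part (b).} Your inductive step does not close. Attach $K_m$ at a non-cut vertex $v$ of $G'$, and put $M=\mathbf{D}(G')+\tfrac12 I$ and $b=\mathbf{D}(G')e_v+\mathbf{1}$. The Schur complement of $M$ in $\mathbf{D}(G)+\tfrac12 I$ is $(1-\beta)J-\tfrac12 I$ with $\beta=b^{t}M^{-1}b$. So the step needs $(m-1)(1-\beta)<\tfrac12$, a quantitative condition on $M^{-1}$ that the inertia hypothesis does not supply. Indeed, $K_{1,4}$ also has $\mathbf{D}+\tfrac12 I$ with exactly one positive and no zero eigenvalue, yet attaching a pendant edge at one of its leaves produces $F_1$.

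A clean replacement: loose block graphs are exactly the line graphs of trees, since each vertex lies in at most two blocks. For a tree $T$ with unsigned vertex--edge incidence matrix $N$, one checks that $N^{t}\mathbf{D}(T)N=4\mathbf{D}(L(T))+2I$.
\begin{itemize}
\item Since $\mathbf{D}(T)$ has exactly one positive eigenvalue and $N$ has full column rank, $\mathbf{D}(L(T))+\tfrac12 I$ has at most one positive eigenvalue.
\item It is nonsingular: use the Graham--Lov\'asz formula for $\mathbf{D}(T)^{-1}$ to see that the restriction of the form of $\mathbf{D}(T)$ to $\operatorname{Im}N$ is nondegenerate.
\end{itemize}

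\textbf{Sufficiency, part (a).} Every block star is an induced subgraph of $K_1\oplus kK_r$, whose nontrivial quotient is only $2\times 2$, so you can use that instead. For a general $(k+1)\times(k+1)$ quotient, the sign at $-1/2$ alone only gives the parity of the number of eigenvalues above $-1/2$.
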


For sake of completeness, we may register that \cite{Xing-Zhou16}   characterized  all graphs for which $\lambda_2 < 2-\sqrt{2} \approx 	-0.58578$ as well as all trees and all  unicyclic graphs satisfying  $\lambda_2<-1/2$ holds. Also, in \cite{Liu-Xue-Guo}, the graphs satisfying
$\lambda_2 \leq (17- \sqrt{329})/2 \approx -0.5692$ were proved to be
 determined by their $\mathbf{D}$-spectra.

\section{Structural description of graphs satisfying $\lambda_2 <-1/2$}\label{sec:4-struct}

 Theorem 5.1 of Guo and Zhou \cite{guo2024graphs} characterizes the split graphs whose second largest distance
eigenvalue satisfies $\lambda_2 <-1/2$.
The following theorem, stated in terms of minimal vertex separators, provides an alternative
formulation of Theorem 5.1 of  \cite{guo2024graphs} and  is  the version adopted in the present paper.

We may recall that a \textit{split graph} is a connected  graph whose vertices can be partitioned into a clique and an independent vertex set. Split graphs are chordal graphs.
Consider  $SP^t$ and $ SP_1$  the graphs  depicted in Figure \ref{fig:split}.

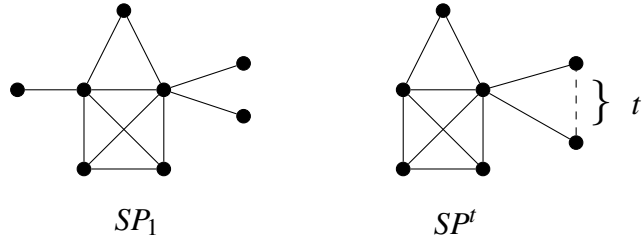
\begin{figure}[h]
\begin{center}
\begin{tikzpicture} [scale=.35,auto=left]
 \tikzstyle{every node}=[circle, draw, fill=black,
                        inner sep=1.8pt, minimum width=4pt]

 \node (o) at (17,2) {};
 \node (p) at (14,5)  {};
 \node (q) at (14,2)  {};
 \node (r) at (17,5) {};
 \node (t) at (15.5,8) {};
  \node (s) at (11.5,5) {};
\node (y) at (20,4) {};
\node (z) at (20,6) {};

\node at (16,0) [draw=none,fill=none] {$SP_1$};

\foreach \from/\to in {o/p,o/r,o/q,p/r,p/q,q/r,s/p,r/y,r/z,t/p,t/r}
\draw (\from) -- (\to);

 \node (o1) at (29,2) {};
\node (p1) at (26,5)  {};
 \node (q1) at (26,2)  {};
\node (r1) at (29,5) {};
 \node (t1) at (27.5,8) {};
\node (y1) at (32.5,3) {};
\node (z1) at (32.5,6) {};

\foreach \from/\to in {o1/r1,o1/q1,o1/p1,p1/r1,p1/q1,q1/r1,r1/y1,r1/z1,t1/p1,t1/r1}
\draw (\from) -- (\to);

 \draw[dashed] (y1) -- (z1) ;
\node at (34,4.5) [draw=none,fill=none] {{\Large \} } $t$};

\node at (28,0) [draw=none,fill=none] {$SP^t$};
     \end{tikzpicture}
\caption{Split graphs of Theorem \ref{theo:split}.}
 \label{fig:split}
\end{center}
\end{figure}

\begin{theorem}\label{theo:split}
 Let $G$ be a split graph and $\mathbb S$ its set of mvs.
Then  $\lambda_2(G) <  -1/2$ if and only if
 $G \cong  SP_1$ \ or \ $G \cong SP^t$, $t \geq 0$, as depicted in Figure \ref{fig:split}, \ or \
    $G$ is also a block graph containing a block $K_s$, $s\geq 2$, such that
\begin{enumerate}
\item[(i)] $|{\mathbb S} | = 1$ or
\item[(ii)] $|{\mathbb S} | = 2$ with $\mu(S_1) =1$ and $ \mu(S_2) \leq 2$ or
\item[(iii)] $\mu(S_i) =1$,  $ S_i \in {\mathbb S}$ or
\end{enumerate}
  any split induced subgraph of any of these graphs.
\end{theorem}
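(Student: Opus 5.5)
Theorem \ref{theo:split} is a reformulation of Theorem 5.1 of \cite{guo2024graphs}, so the plan is to translate between that statement and the language of minimal vertex separators, not to redo the spectral analysis. Guo and Zhou describe the split graphs with $\lambda_2<-1/2$ by explicit families: a clique $K$ with independent vertices attached to subsets of $K$, with restrictions on how the neighbourhoods of the independent vertices may overlap. I will take their list as given and show that it coincides with the list in the statement.

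\textbf{Step 1: separators of a split graph.} Let $G$ have split partition $K\cup I$, with $K$ a maximal clique. Every independent vertex $x\in I$ is simplicial. The maximal cliques are $K$ together with the sets $N(x)\cup\{x\}$ for $x\in I$ with $N(x)\ne K$. A clique-tree is therefore a star centred at $K$. By the Blair--Peyton result, the multiset $\mathbb M$ consists of the sets $N(x)$ (those properly contained in $K$), one copy per such $x$. Hence $\mu(S)$ is the number of independent vertices whose neighbourhood is exactly $S$.

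\textbf{Step 2: when is $G$ a block graph.} By Lemma \ref{lem:block}, $G$ is a block graph exactly when every $N(x)$ is a single vertex of $K$. The diamond-free condition gives the same criterion: if some $|N(x)|\ge 2$ and $K\not\subseteq N(x)$, a diamond appears.

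\textbf{Step 3: sort Guo--Zhou's families.} Their families with all pendant attachments are block graphs. I will check the following correspondence:
\begin{enumerate}
\item[(i)] all independent vertices attached to one vertex of $K$ gives $|\mathbb S|=1$;
\item[(ii)] attachments at two vertices, with one vertex carrying one pendant and the other at most two, gives $|\mathbb S|=2$ with $\mu(S_1)=1$ and $\mu(S_2)\le 2$;
\item[(iii)] at most one pendant per clique vertex gives all $\mu(S_i)=1$.
\end{enumerate}
The remaining families contain an independent vertex adjacent to two clique vertices. These are identified with $SP_1$ and $SP^t$ by reading Figure \ref{fig:split}: there the clique $\{o,p,q,r\}$ has $t$ pendants at $r$ and a vertex $t$ adjacent to $p$ and $r$, and $SP_1$ adds a pendant $s$ at $p$.

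\textbf{Step 4: closure under induced subgraphs.} The clause on split induced subgraphs is justified by Remark \ref{rem:interl}. Split graphs have diameter at most $3$, but in these families the subgraphs in question are Ptolemaic (gem-free), so interlacing applies, and $\lambda_2<-1/2$ is inherited by connected induced subgraphs.

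\textbf{Main obstacle.} The delicate point is Step 3. I must check that each of Guo and Zhou's parameterized families falls under exactly one of the clauses, including degenerate cases such as $K$ being small or some independent vertex being adjacent to all of $K$. No family should be lost or added by the reformulation, and matching the edge cases is where I expect care to be needed.
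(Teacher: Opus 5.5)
Your proposal is correct and takes the same route as the paper, which also treats this theorem as a restatement of Theorem~5.1 of \cite{guo2024graphs} and simply asserts that the two lists of families coincide. Your Steps 1, 2 and 4 make explicit what the paper calls ``straightforward'': the star clique tree gives $\mu(S)=\#\{x\in I : N(x)=S\}$, block graphs are exactly those with every $|N(x)|=1$, and closure under split induced subgraphs follows from Ptolemaicity and Remark~\ref{rem:interl}. One small correction: $SP_1$ is $SP^2$ plus a pendant at $p$, not $SP^t$ plus a pendant for general $t$.
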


\begin{proof}
    It is straightforward proving that the families of graphs described in Theorem~5.1 of
Guo and Zhou~\cite{guo2024graphs} coincide exactly with those listed in
Theorem~\ref{theo:split}.
\end{proof}
\smallskip

As in the previous theorem, the development of our results is based on a detailed investigation of the minimal vertex se\-pa\-ra\-tors of the graphs.
If a graph  satisfies $\lambda_2 <-1/2$ and contains only $mvs$ of cardinality one it is a block graph, by Lemma \ref{lem:block}, and the problem is already solved, see  Lemma \ref{lem:block-graph}. We may note that, in this case, the graph can   have any diameter.
Our goal from now is to determine which chordal graphs that are not block graphs have $\lambda_2 < - 1/2$. In the following results,  we   study the graphs considering their diameters and characteristics of their \textit{mvs}.

\begin{proposition}\label{lem:mvs-cardinal3}

If a chordal graph $G$ has a \textit{mvs} of cardinality 3 then  it has $\lambda_2(G) > -1/2$.
\end{proposition}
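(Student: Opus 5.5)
Let $S$ be a minimal vertex separator of $G$ with $|S|=3$. The plan is to find a small induced subgraph $H$ of $G$ with $\mathrm{diam}(H)\le 2$ and $\lambda_2(H)>-1/2$. Remark~\ref{rem:interl} then gives $\lambda_2(G)\ge\lambda_2(H)>-1/2$. Since $G$ is chordal, Lemma~\ref{chordal-1} says $S=\{a,b,c\}$ is a clique.

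Because $S$ is a minimal $uv$-separator, $G-S$ has at least two \emph{full} components $C_1\ni u$ and $C_2\ni v$. Here a component is full when every vertex of $S$ has a neighbour in it. From Blair--Peyton, there are adjacent maximal cliques $Q'\supseteq S$ and $Q''\supseteq S$ in a clique-tree with $Q'\cap Q''=S$. So there exist $x\in Q'\setminus S$ and $y\in Q''\setminus S$, lying in different components of $G-S$, with $x$ and $y$ both adjacent to all of $a,b,c$. Then $H=G[\{x,a,b,c,y\}]$ is $K_5$ minus the edge $xy$, that is $K_3\oplus 2K_1$.

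$H$ has diameter $2$, so its distance matrix is a principal submatrix of $\mathbf{D}(G)$ by Remark~\ref{rem:interl}. It remains to compute $\lambda_2(H)$. Write $\mathbf{D}(H)$ in the block form given by the partition $\{a,b,c\}\cup\{x,y\}$. Vectors supported on $\{a,b,c\}$ and summing to zero give eigenvalue $-1$ with multiplicity $2$. The vector $e_x-e_y$ gives eigenvalue $0-2=-2$. The two remaining eigenvalues come from the quotient matrix
\[
\begin{pmatrix} 2 & 2\\ 3 & 2\end{pmatrix},
\]
whose eigenvalues are $2\pm\sqrt6$. Thus the spectrum of $\mathbf{D}(H)$ is $\{2+\sqrt6,\,-1,\,-1,\,-2,\,2-\sqrt6\}$, and $\lambda_2(H)=-1$. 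This is not larger than $-1/2$, so this first choice of $H$ does not suffice.

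Therefore $H$ must be enlarged. I would use the structure more fully. Minimality of $S$ means no proper subset of $S$ separates $u$ from $v$, but that alone does not force extra vertices: $K_3\oplus 2K_1$ is itself chordal, and its set $\{a,b,c\}$ is a minimal vertex separator of cardinality $3$. As a sanity check, I would compute $\lambda_2(K_3\oplus 2K_1)$ more carefully. The eigenvector $e_x-e_y$ has eigenvalue $d(x,x)-d(x,y)=-2$, and vectors supported on the clique and summing to zero have eigenvalue $-1$. The quotient also checks out: the row for the clique part is $(2,2)$, since the sum over the clique is $0+1+1$ and there are $2$ neighbours among $\{x,y\}$; the row for $\{x,y\}$ is $(3,2)$, since $x$ has $3$ clique neighbours and $d(x,x)+d(x,y)=2$. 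Its eigenvalues $2\pm\sqrt6$ are about $4.45$ and $-0.45$, so $\lambda_2=2-\sqrt6\approx-0.449>-1/2$. The earlier ordering was the error: $2-\sqrt6$ exceeds $-1$ and is the second largest eigenvalue.

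Hence $H=K_3\oplus 2K_1$ already satisfies $\lambda_2(H)=2-\sqrt6>-1/2$, and interlacing finishes the proof. The only real obstacle is justifying the existence of $x$ and $y$ that are each complete to $S$. This follows from the clique-tree characterization of minimal vertex separators (Blair--Peyton), because $Q'\setminus S$ and $Q''\setminus S$ are nonempty by maximality and lie on opposite sides of $S$.
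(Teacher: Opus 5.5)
Your argument is correct and is essentially the paper's proof. The paper uses the same graph $K_3\oplus 2K_1$ ($K_5$ minus an edge, Figure~\ref{fig:mvs3}(a)), with second distance eigenvalue $2-\sqrt{6}\approx -0.44949$, together with interlacing via Remark~\ref{rem:interl}. Your clique-tree argument that every chordal graph with a $3$-element mvs contains this graph as an induced subgraph fills in a step the paper only asserts; in a write-up, delete the intermediate passage where you misordered the spectrum and claimed $\lambda_2(H)=-1$.
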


\begin{proof}
Let us consider $G$ to be the chordal graph  with  minimum number of vertices containing a $mvs$  $S=\{a,b,c\}$ of cardinality 3,  depicted in Figure \ref{fig:mvs3}(a).
This graph has $\lambda _2\approx -0.44949$.
As all chordal graphs containing at least one $mvs$ of cardinality 3 have $G$ as an induced subgraph, according to Remark \ref{rem:interl},
these graphs do not satisfy the condition $\lambda_2 <-1/2$.
\end{proof}

It is interesting to observe that the graph in Figure \ref{fig:mvs3}(a) is a new forbidden subgraph for our problem.

\begin{figure}[h]
\begin{center}
\begin{tikzpicture}
 [scale=.48,auto=left]
 \tikzstyle{every node}=[circle, draw, fill=black,
                        inner sep=1.5pt, minimum width=4pt]

  \node (c) at (1,10) {};
  \node (d) at (3,10) {};
  \node (b) at (-0.5,8.5)  {};
  \node (e) at (4.5,8.5)  {};
  \node (a) at (2,7) {};

\node at (2,6) [draw=none,fill=none] {$a$};
\node at (1,11) [draw=none,fill=none] {$b$};
\node at (3,11) [draw=none,fill=none] {$c$};

\node at (2,4.5) [draw=none,fill=none] {$(a)$};

 \foreach \from/\to in {c/d,c/b,c/a,d/a,d/e,b/a,b/d,e/a,e/c}
 \draw (\from) -- (\to);

  \node (i) at (9,10) {};
 \node (g) at (7.5,8.5) {};
\node (h) at (10.5,8.5)  {};
 \node (f) at (9,7)  {};
 \node (j) at (9,11.5)  {};

\node at (9,4.5) [draw=none,fill=none] {$(b)$};

 \foreach \from/\to in {i/g,i/h, g/h,g/f, h/f,g/j,h/j}
  \draw (\from) -- (\to);

\end{tikzpicture}
\end{center}
\caption{(a) Smallest  graph with a $mvs$ $S$ such that $|S|=3$; (b) smallest graph with a \textit{mvs} $S$ for which $|S|=2$ and $\mu(S)=2$.}
\label{fig:mvs3}
\end{figure}
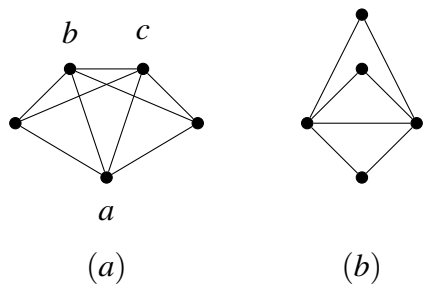

\begin{proposition}\label{lem:mvs-cardinal}
Let $G$ be a chordal graph.
If $G$ has a $mvs$ $S$ of  cardinality 2 with multiplicity
at least 2   then $\lambda_2(G) > -1/2$.
\end{proposition}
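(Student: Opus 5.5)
The plan is to imitate the proof of Proposition~\ref{lem:mvs-cardinal3}. I will exhibit the smallest chordal graph with a $mvs$ $S$ of cardinality 2 and $\mu(S)\ge 2$, namely the graph of Figure~\ref{fig:mvs3}(b). I will show it is an induced subgraph of every such $G$, check that its diameter is at most 2, and compute its $\lambda_2$. Remark~\ref{rem:interl} then finishes the proof.

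\emph{Structural step.} Let $S=\{a,b\}$ be a $mvs$ of $G$ with $\mu(S)\ge 2$. By Lemma~\ref{chordal-1}, $a$ and $b$ are adjacent. I will use the standard fact on clique-trees of chordal graphs that $\mu(S)$ equals the number of full components of $G-S$ minus one. Here a full component is a connected component $C$ of $G-S$ such that every vertex of $S$ has a neighbour in $C$. This fact follows from the Blair--Peyton description of the edges of a clique-tree labelled $S$. Hence $G-S$ has at least three full components $C_1,C_2,C_3$.

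In each $C_i$ I want a vertex $x_i$ adjacent to both $a$ and $b$. To get it, take a shortest path from $a$ to $b$ whose interior lies in $C_i$. If its interior had length greater than one, this path together with the edge $ab$ would form a chordless cycle of length at least 4, contradicting chordality. So the interior is a single vertex $x_i$. The vertices $x_1,x_2,x_3$ lie in distinct components, so they are pairwise nonadjacent. Therefore $\{a,b,x_1,x_2,x_3\}$ induces $K_2\oplus 3K_1$, which is the graph $H$ of Figure~\ref{fig:mvs3}(b).

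\emph{Spectral step.} The graph $H$ has diameter 2, so by Remark~\ref{rem:interl} its distance matrix is a principal submatrix of $\mathbf{D}(G)$ and $\lambda_2(G)\ge\lambda_2(H)$. I compute $\mathbf{D}(H)$ using the equitable partition $\{a,b\}\cup\{x_1,x_2,x_3\}$.
\begin{itemize}
\item Vectors supported on $\{x_1,x_2,x_3\}$ with zero sum give the eigenvalue $-2$ with multiplicity 2.
\item The vector that is $1$ on $a$, $-1$ on $b$ and $0$ elsewhere gives the eigenvalue $-1$.
\item The remaining two eigenvalues are those of the quotient matrix $\begin{pmatrix}1&3\\2&4\end{pmatrix}$, namely $(5\pm\sqrt{33})/2$.
\end{itemize}
Thus $\lambda_2(H)=(5-\sqrt{33})/2\approx -0.372>-1/2$, and so $\lambda_2(G)>-1/2$.

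The main obstacle is the structural step, specifically justifying the link between multiplicity and full components. If citing that fact directly is unsatisfactory, I would argue straight from the clique-tree. The maximal cliques containing $S$ induce a subtree of $T$. Two distinct edges of $T$ labelled $S$ yield three maximal cliques $Q_1,Q_2,Q_3$ containing $S$ whose pairwise intersections along the tree are exactly $S$. Removing $S$ places the sets $Q_i\setminus S$ in different components of $G-S$, and one can then choose $x_i\in Q_i\setminus S$.
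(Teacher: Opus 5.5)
Your proof is correct and follows the paper's route: you exhibit $K_2\oplus 3K_1$ (the graph of Figure~\ref{fig:mvs3}(b), i.e.\ $F_{12}$) as an induced subgraph of diameter 2 and apply Remark~\ref{rem:interl}. Where the paper only asserts that every such $G$ contains this graph, you prove it, using $\mu(S)+1$ full components and chordality to obtain the common neighbours $x_i$; and where the paper cites Guo--Zhou's $F_{12}$, you compute $\lambda_2(H)=(5-\sqrt{33})/2\approx-0.372$ directly.
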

\begin{proof}
The chordal graph with minimum number of vertices which  has a $mvs$  of  cardinality 2 with multiplicity
at least 2
 is in Figure \ref{fig:mvs3}(b). It corresponds to $F_{12}$ in  Figure \ref{fig:grande}, so it does not obey the condition $\lambda_2 < -1/2$.
Any chordal graph with more vertices and a $mvs$ of cardinality 2 with multiplicity
at least 2 has this graph as an induced subgraph.
Thus, graphs with  $mvs$ of cardinality 2  with multiplicity greater than one
 does not satisfy the condition $\lambda_2 < -1/2$.
\end{proof}

\begin{proposition}\label{lem:Ptolemaic}
If the graph $G$ satisfies $\lambda_2(G) < -1/2$ then $G$ is a Ptolemaic graph.
\end{proposition}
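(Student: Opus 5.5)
By Lemma \ref{lem:guochordal}, a graph $G$ with $\lambda_2(G)<-1/2$ is chordal. So, by Lemma \ref{lem:pto}(ii), it suffices to show that $G$ is gem-free. The plan is to show that the gem itself has $\lambda_2>-1/2$, and then transfer this to $G$ through interlacing as in Remark \ref{rem:interl}.

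\textbf{The transfer step.} Label the gem as a path $b,c,d,e$ together with a vertex $a$ adjacent to all four. Since $a$ is universal, the gem has diameter $2$. Hence if the gem occurs as an induced subgraph $H$ of $G$, every pair of vertices of $H$ is at distance $1$ or $2$ in $H$. The same distances hold in $G$: adjacent pairs are at distance $1$, and non-adjacent pairs have the common neighbour $a$, so they are at distance $2$. Thus $\mathbf{D}(H)$ is a principal submatrix of $\mathbf{D}(G)$. Remark \ref{rem:interl} explicitly covers induced subgraphs of diameter at most $2$ of a general graph, so Lemma \ref{lem:interl} gives $\lambda_2(G)\ge\lambda_2(H)$.

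\textbf{The spectral step.} It remains to check that the gem satisfies $\lambda_2>-1/2$. Write $\mathbf{D}=2(J-I)-A$ for the gem. The reflection symmetry swapping $b\leftrightarrow e$ and $c\leftrightarrow d$ splits the spectrum into a symmetric and an antisymmetric part.

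For an antisymmetric vector $x=(0,1,y,-y,-1)$ on $(a,b,c,d,e)$, the row of $b$ gives
\[\mathbf{D}x_b = 0\cdot 1 + 1\cdot y + 2(-y) + 2(-1) = -y-2,\]
and the row of $c$ gives
\[\mathbf{D}x_c = 1 + 0 + 1\cdot(-y) + 2(-1) = -1-y.\]
The eigen-equations $\lambda=-y-2$ and $\lambda y=-1-y$ give $(-y-2)y=-1-y$, that is, $y^2+y-1=0$. Hence $y=(-1\pm\sqrt5)/2$, and the corresponding eigenvalues are $\lambda=-y-2=(-3\mp\sqrt5)/2$. The larger of these, $(-3+\sqrt5)/2\approx-0.382$, already exceeds $-1/2$.

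There is a second distance eigenvalue above it: $\lambda_1>0$, since $\mathbf{D}$ is a nonnegative irreducible matrix and its Perron eigenvalue is positive. Therefore $\lambda_2(\text{gem})\ge(-3+\sqrt5)/2>-1/2$. Alternatively, one can simply cite the gem as one of the forbidden subgraphs in Figure \ref{fig:grande}, or verify the value numerically as in Proposition \ref{lem:mvs-cardinal3}.

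\textbf{Conclusion.} Combining the two steps, if $G$ contained an induced gem we would have $\lambda_2(G)>-1/2$, a contradiction. So $G$ is gem-free and chordal, hence Ptolemaic by Lemma \ref{lem:pto}.

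The main point to watch is the transfer step. Interlacing requires $\mathbf{D}(H)$ to be a genuine principal submatrix of $\mathbf{D}(G)$, and we do not yet know that $G$ is distance hereditary. The diameter-$2$ observation above is what removes this difficulty.
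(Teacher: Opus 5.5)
Your proof is correct and follows the paper's route. Both combine chordality from Lemma~\ref{lem:guochordal}, the exclusion of the gem (which is $F_{11}$ in Figure~\ref{fig:grande}), and Lemma~\ref{lem:pto}(ii). The only difference is that you check two things explicitly that the paper simply takes from Guo and Zhou's forbidden subgraph $F_{11}$: that the gem has $\lambda_2\ge(-3+\sqrt5)/2>-1/2$, and that its distance matrix is a principal submatrix of $\mathbf{D}(G)$ because the gem has diameter $2$.
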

\begin{proof}
It is already known that if $\lambda_2(G) < -1/2$ then $G$ is a chordal graph (Lemma \ref{lem:guochordal}).
Also, the forbidden subgraph $F_{11}$ (Figure \ref{fig:grande}) can not occur in Ptolemaic graphs, as seen in Lemma \ref{lem:pto}.
So, the graphs that obey the condition are Ptolemaic graphs.
\end{proof}

\section*{Ptolemaic graphs of diameter 2 satisfying $\lambda_2 <-1/2$}

The block graphs of diameter 2  for which $\lambda_2 <-1/2$ are the so called ``block stars'' of Lemma \ref{lem:block-graph}. In order to describe all the graphs of diameter 2  satisfying the above condition, we  define a new class of graphs, the {\em relaxed block stars}.
We call  {\em full house}  the graph consisting of the \emph{house graph},
well known in the literature, plus  two chords
connecting diagonally opposite vertices of the cycle of size 4.
A {\em relaxed block star}   is a graph with an universal vertex
and such that it has at least one $mvs$ with cardinality 2 and all its blocks are cliques
or full houses or diamonds. See  Figure \ref{fig:examplerelaxed} for an example.

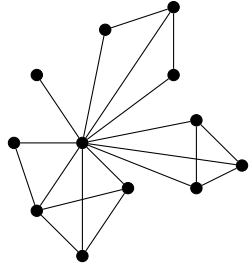
\begin{figure}[h]
\begin{center}
\begin{tikzpicture} [scale=0.15,auto=left]
 \tikzstyle{every node}=[circle, draw, fill=black,
                        inner sep=1.5pt, minimum width=4pt]

\node (a) at (6,10) {};
 \node (b) at (0,10)  {};
\node (c) at (2,4)  {};
\node (d) at (6,0) {};
 \node (e) at (10,6) {};

\node (f) at (2,16) {};

\node (g) at (8,20) {};
\node (h) at (14,22) {};
\node (i) at (14,16) {};

\node (j) at (16,12) {};
\node (k) at (16,6) {};
\node (l) at (20,8) {};


\foreach \from/\to in {a/b,a/c,a/d,a/e,b/c,c/d,c/e,d/e,
a/f,
a/g,a/h,a/i,g/h,h/i,
a/j,a/k,a/l,j/k,j/l,k/l}
\draw (\from) -- (\to);

 \end{tikzpicture}
 \caption{Example of a relaxed block star graph.}
 \label{fig:examplerelaxed}
\end{center}
\end{figure}

We prove in Section~\ref{sec:spec-results} (Theorem~\ref{the:main}) that every
relaxed block star graph satisfies the condition $\lambda_2 < -1/2$.
The next theorem shows that the  graphs of diameter~2 satisfying this condition
and that are not block graphs are precisely the connected induced subgraphs of
relaxed block star graphs.

\smallskip

\begin{theorem} \label{theo:pto-mvs2}
 Let $G$ be a  graph of diameter 2 and suppose that
its set \ $\mathbb S$ of minimal vertex separators  has at least one $mvs$ of cardinality 2.
If $\lambda_2(G) < -1/2$ then $G$ is a connected  induced subgraph of a relaxed block star.
\end{theorem}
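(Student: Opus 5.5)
By Proposition~\ref{lem:Ptolemaic}, $G$ is Ptolemaic, hence chordal and gem-free. By Propositions~\ref{lem:mvs-cardinal3} and~\ref{lem:mvs-cardinal}, every minimal vertex separator of $G$ has cardinality at most $2$. Those of cardinality $2$ have multiplicity $1$. (A separator of cardinality $\ge 4$ contains an induced configuration with a separator of cardinality $3$, so these are excluded as well.) Since $\mathrm{diam}(G)=2$, every induced subgraph of diameter at most $2$ gives a principal submatrix of $\mathbf{D}(G)$. Remark~\ref{rem:interl} therefore lets us forbid every graph of Figure~\ref{fig:grande} of diameter $2$, and also every small diameter-$2$ graph $H$ for which we can check numerically that $\lambda_2(H)>-1/2$. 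The plan is to show from these restrictions that $G$ has a vertex $u$ adjacent to all the others, and then to identify the blocks of $G$.

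\textbf{Step 1: a universal vertex.} Work with a clique tree $T$ of $G$. Every edge of $T$ is labelled by a separator $Q'\cap Q''$ of size $1$ or $2$, and each size-$2$ label occurs only once. First I will argue that all the minimal vertex separators share a common vertex. Take two disjoint separators $S_1,S_2$. Choose maximal cliques on the far sides of the corresponding edges of $T$, together with vertices of them outside $S_1\cup S_2$. Because $\mathrm{diam}(G)=2$ and $G$ is Ptolemaic, this should produce a small induced subgraph of diameter at most $2$ that is one of the forbidden graphs ($F_9$, $F_{10}$, $F_{13}$, a gem, or a graph like $P_5$ plus a dominating-type vertex). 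Its $\lambda_2$ exceeds $-1/2$, which is a contradiction. Granting pairwise intersection, the separators form a family of sets of size at most $2$ that pairwise meet, and a Helly-type argument in the clique tree should give a vertex $u$ lying in all of them. One case must be ruled out by a forbidden subgraph: three separators $\{a,b\},\{b,c\},\{a,c\}$, which form a triangle of pairs with no common vertex. Finally, every maximal clique is adjacent in $T$ to some other clique through a separator containing $u$, so every maximal clique contains $u$. Hence $u$ is universal.

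\textbf{Step 2: the blocks.} Now $G = K_1\oplus H$ with $H=G-u$, and the blocks of $G$ correspond to the components of $H$ with $u$ added. The minimal vertex separators are $\{u\}$ and sets $\{u,x\}$ where $x$ is a cut vertex of $H$. Since $G$ is gem-free, $H$ is $P_4$-free, and $H$ is chordal. Hence every component of $H$ is a connected quasi-threshold graph, and so has its own universal vertex. Take a component $C$ that is not a clique. Separators $\{u,x\}$ with multiplicity $1$ force $C$ to have exactly one cut vertex $x$, and $C-x$ to have exactly two components. Each of these two components must be a clique. Otherwise $C$ would contain an induced path $P_3$ hanging off $x$, and together with $u$ this gives a separator of cardinality $3$ or a forbidden subgraph. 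So $C$ is two cliques $K_a$ and $K_b$ glued at $x$, and $C+u$ is the corresponding graph. The remaining task is to show $a,b\le 2$, i.e.\ that each side is a single vertex or an edge. This would give exactly the diamond ($a=b=1$) or the full house ($a=2,b=1$), or possibly the case $a=b=2$. These larger configurations must then be compared with forbidden graphs such as $F_{13}$ and $SP_1$-type split subgraphs, using Theorem~\ref{theo:split} since they are split graphs. I expect this bound to hold only up to taking induced subgraphs, which is why the conclusion reads ``connected induced subgraph of a relaxed block star''.

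\textbf{Main obstacle.} The hardest part is the exact boundary of the block types in Step 2. I need to decide precisely which sizes $(a,b)$ are allowed and verify the corresponding small forbidden graphs, each with a $\lambda_2$ computation. Step 1's case analysis showing that all separators share a vertex is also delicate. I would first try to reduce both steps to split subgraphs, where Theorem~\ref{theo:split} applies directly.
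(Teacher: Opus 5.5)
Your Step~2 reduction is correct and tidier than the paper's picture-based case analysis. With $u$ universal, every mvs of $G=K_1\oplus H$ is $\{u\}\cup S'$, where $S'$ is empty or a mvs of a component of $H$. So the components of $H$ are $P_4$-free block graphs, and multiplicity one of $\{u,x\}$ forces each non-clique component to be $K_1\oplus(K_a\cup K_b)$.

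\textbf{The gap.} You stop exactly where the decisive spectral input is needed, and the target you propose is wrong. What must be shown is $a+b\le 3$, not $a,b\le 2$.

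\emph{The case $a=b=2$.} This gives $K_2\oplus 2K_2$, two copies of $K_4$ sharing the edge $\{u,x\}$ (Figure~\ref{fig:mvs-2}(d)). This graph is $2$-connected and is neither a clique, a full house nor a diamond. Hence it is not an induced subgraph of any relaxed block star, and it must be excluded. Theorem~\ref{theo:split} cannot do this, because the graph contains an induced $2K_2$ and is not split. $F_{13}$ is irrelevant here, since it has diameter $3$. You need a direct computation. Write $z_1z_2$ and $w_1w_2$ for the two edges of $2K_2$. The partition $\{u,x\},\{z_1,z_2,w_1,w_2\}$ is equitable with quotient $\begin{bmatrix}1&4\\2&5\end{bmatrix}$. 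The vectors $e_u-e_x$, $e_{z_1}-e_{z_2}$, $e_{w_1}-e_{w_2}$ and $e_{z_1}+e_{z_2}-e_{w_1}-e_{w_2}$ give the eigenvalues $-1,-1,-1,-3$. Together these yield $\lambda_2=3-2\sqrt3\approx-0.4641>-1/2$.

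\emph{The case $(a,b)=(3,1)$.} This is $K_2\oplus(K_1\cup K_3)$, i.e.\ $K_5$ plus a vertex joined to two of its vertices (Figure~\ref{fig:mvs-2}(c)). It is split and is excluded by Theorem~\ref{theo:split}; directly, $\lambda_2\approx-0.491$.

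Both graphs have diameter $2$, so interlacing applies, and every pair with $a+b\ge 4$ contains one of them. Only after these two checks do the non-clique components reduce to $P_3$ and the paw. Then $G$ is itself a relaxed block star, with no passage to induced subgraphs needed at that point. These two graphs are precisely the ones the paper checks.

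\textbf{Step~1.} As written it is only a heuristic (``should produce'', ``should give''). You never identify the forbidden subgraphs that would rule out disjoint separators or the triangle $\{a,b\},\{b,c\},\{a,c\}$. You do not need this step at all.

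Suppose $abcd$ is an induced $P_4$ in $G$. Diameter $2$ gives a common neighbour $x$ of $a$ and $d$. Chordality forces both chords $xb$ and $xc$ of the $5$-cycle $abcdx$, producing a gem. So $G$ is a connected $\{P_4,C_4\}$-free graph. Such a graph is a join $G_1\oplus G_2$, and one side must be complete, since non-edges on both sides would give an induced $C_4$. Hence $G$ has a universal vertex.

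The $P_4$-freeness is exactly the paper's proposition immediately following the theorem. It is also the same gem argument you already apply to $H$ in Step~2.
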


\begin{proof}
Let $G$ be  a   graph with $\mathrm{diam}(G)=2$ and at least one $mvs$ of cardinality 2.
By Lemma \ref{lem:guochordal},    we know that
the condition $\lambda_2(G) <-1/2$ implies that $G$ is a chordal graph and thus, by Proposition
\ref{lem:mvs-cardinal3}, the $mvs$ of $G$ have cardinality 1 or 2. Also, by Proposition \ref{lem:pto}, $G$ is a Ptolemaic graph. Thus, in the sequence, we analyze all the possibilities among Ptolemaic graphs of diameter 2  with  $mvs$ of cardinalities 1 and   2  in order to conclude that we must have a connected induced subgraph of a relaxed block star.

Firstly,  let   $G$ be a  graph   with  at least two $mvs$ of cardinality 1.
 Suppose that the two $mvs$ belong  to  the same maximal clique $Q$ of $G$.
Then  there is at least one vertex, that do not belong to $Q$,
adjacent to each $mvs$.
The existence of these vertices implies a diameter  at least  3.
If the two $mvs$s do not belong to the same maximal clique
there must exist another $mvs$ in their path and
this also imply  a diameter at least 3.
So $G$ can have only one $mvs$ of cardinality 1.
Suppose now that $G$ has one $mvs$ $S$ of cardinality 2. By Proposition \ref{lem:mvs-cardinal}, it suffices to analyze   the case where the  $mvs$ $S$ has multiplicity one.
The Ptolemaic graphs with one $mvs$ of cardinality 2 and minimum number of vertices (4, 5 and 6 vertices)
appear in Figure \ref{fig:mvs-2}(a), (b), (c) and (d).
\begin{figure}[h]
\begin{center}
\begin{tikzpicture} [scale=0.3]
 \tikzstyle{every node}=[circle, draw, fill=black,
                        inner sep=1.8pt, minimum width=4pt]

  \node (x) at (-12,2) {};
  \node (y) at (-12,6) {};
\node (z) at (-10,4)  {};
\node (t) at (-14,4)  {};

\node at (-12,0) [draw=none,fill=none] {$(a)$};

\foreach \from/\to in {x/y,x/z,x/t,y/z,y/t}
    \draw (\from) -- (\to);

 \node (g1) at (-4,2) {};
  \node (h1) at (-4,5) {};
  \node (i1) at (-7,2)  {};
  \node (j1) at (-7,5)  {};
 \node (k1) at (-2,3.5) {};

\node at (-5,0) [draw=none,fill=none] {$(b)$};

\foreach \from/\to in {g1/h1,g1/i1,g1/j1,g1/k1,i1/j1,i1/h1,h1/k1,h1/j1}
    \draw (\from) -- (\to);

  \node (a) at (3,2) {};
  \node (b) at (6,2) {};
  \node (c) at (3,5)  {};
  \node (d) at (6,5)  {};
  \node (e) at (4.5,8) {};
  \node (f) at (1,3.5) {};

\node at (3,0) [draw=none,fill=none] {$(c)$};

\foreach \from/\to in {a/b,a/c,a/d,b/c,b/d,c/d,e/a,e/b,e/c,e/d,f/a,f/c}
    \draw (\from) -- (\to);

 \node (g) at (10,2) {};
  \node (h) at (10,5) {};
  \node (i) at (13,2)  {};
  \node (j) at (13,5)  {};
  \node (k) at (16,2) {};
\node (m) at (16,5) {};

\node at (13,0) [draw=none,fill=none] {$(d)$};

\foreach \from/\to in {g/h,g/i,g/j, h/i,h/j, i/j,k/i,k/j,k/m,m/i,m/j}
 \draw (\from) -- (\to);

 \end{tikzpicture}
\caption{Smallest Ptolemaic  graphs with one $mvs$ of cardinality 2.}
\label{fig:mvs-2}
\end{center}
\end{figure}
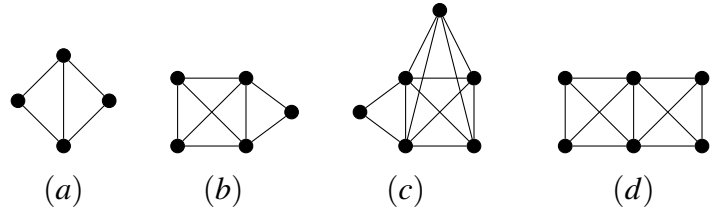

 Figure \ref{fig:mvs-2}(a) (the diamond graph) is an induced subgraph of the full house (Figure \ref{fig:mvs-2}(b)). Since this last one is an induced subgraph of the split graph $SP_1$, by Theorem \ref{theo:split} both the graphs have $\lambda_2 <-1/2$. Since they can be  induced subgraphs of any relaxed block star, they are allowed as subgraphs of our $G$.
The graph in Figure \ref{fig:mvs-2}(c)  is a split graph that does not satisfy Theorem \ref{theo:split}.  The graph in Figure \ref{fig:mvs-2}(d) has  $\lambda_2 \approx  -0.4641$, so it does not satisfy the required condition on $\lambda_2$.
 Figures \ref{fig:mvs-2}(c) and (d) correspond to the addition of one vertex to each of the maximal cliques
of the full house graph.
Any other addition, as, for instance, a vertex adjacent to a vertex of the $mvs$,
 results in the creation of a second $mvs$, a situation that we are going to study later.
Any Ptolemaic graph with one $mvs$ $S=\{a,b\}$ of cardinality $2$ and at least five vertices adjacent
simultaneously to $a$ and $b$  has the graphs of
Figure \ref{fig:mvs-2}(c) or Figure \ref{fig:mvs-2}(d) as an induced subgraph.
Hence, none of these graphs can  appear as a induced subgraph of our initial graph $G$.

So, being $G$ a  Ptolemaic graph of diameter 2,
if $\lambda_2(G) < -1/2$ and $G$ has only one $mvs$ and this $mvs$ has cardinality 2 then $G$
is the full house or an induced subgraph of the full house.

 Suppose now that the graph $G$ has more than one $mvs$.
  Firstly, consider that  there are  two $mvs$, $S_i$ and $S_j$, of cardinality 2.
If $S_i \cap S_j = \emptyset$ they can belong to same maximal clique (Figure \ref{fig:mvs2-2}(a))
or not.
In the first case, the graph is  the forbidden graph $F_{13}$  (Figure \ref{fig:grande}).
In the second case, there is at least  another $mvs$ separating the maximal cliques
and the graph has at least diameter 3 (Figure \ref{fig:mvs2-2}(b)). So, the situation $S_i \cap S_j = \emptyset$ does not happen.
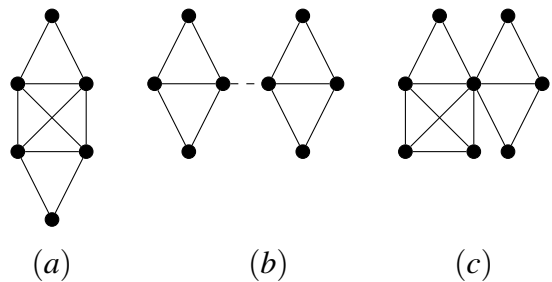
\begin{figure}[h]
\begin{center}
\begin{tikzpicture} [scale=.3,auto=left]
 \tikzstyle{every node}=[circle, draw, fill=black,
                        inner sep=1.8pt, minimum width=4pt]

\node (a1) at (-6,2) {};
 \node (b1) at (-9,5)  {};
 \node (c1) at (-9,2)  {};
\node (d1) at (-6,5) {};
 \node (e1) at (-7.5,8) {};
\node (f1) at (-7.5,-1) {};

\node at (-7.5,-3) [draw=none,fill=none] {$(a)$};

\foreach \from/\to in {a1/b1,a1/d1,a1/c1,b1/d1,b1/c1,c1/d1,e1/d1,b1/e1,f1/c1,f1/a1}
\draw (\from) -- (\to);

\node (a2) at (-1.5,2) {};
\node (b2) at (-3,5)  {};
\node (d2) at (0,5) {};
\node(d21) at (2,5) {};
\node (e2) at (-1.5,8) {};
\node (f2) at (5,5) {};
\node(g2) at (3.5,2){};
\node(h2) at (3.5,8){};

\foreach \from/\to in {a2/b2,a2/d2,b2/d2,e2/d2,b2/e2,f2/d21,h2/d21,g2/d21,h2/f2,g2/f2}
\draw (\from) -- (\to);

\draw[dashed] (d2) -- (d21) ;

\node at (2,-3) [draw=none,fill=none] {$(b)$};

\node (a3) at (11,2) {};
\node (b3) at (8,5)  {};
\node (c3) at (8,2)  {};
\node (d3) at (11,5) {};
\node (e3) at (9.5,8) {};
\node (f3) at (14,5) {};
\node(g3) at (12.5,2){};
\node(h3) at (12.5,8){};

\node at (11,-3) [draw=none,fill=none] {$(c)$};

\foreach \from/\to in {a3/b3,a3/d3,a3/c3,b3/d3,b3/c3,c3/d3,e3/d3,b3/e3,f3/d3,h3/d3,g3/d3,h3/f3,g3/f3}
\draw (\from) -- (\to);

 \end{tikzpicture}
\vskip -.5 cm
\caption{Smallest Ptolemaic graphs with two  $mvs$ of cardinality 2.}
\label{fig:mvs2-2}
\end{center}
\end{figure}

If $S_i \cap S_j = \{x\}$ and $\{x\}$  is not a $mvs$,
the graph  has the gem $F_{11}$ as induced subgraph, which is a forbidden  subgraph
(Figura \ref{fig:grande}).
If $\{x\}$  is a $mvs$ and $S_i$ and $S_j$ belong to full houses or
induced subgraphs of full houses as in Figure \ref{fig:mvs2-2}(c),  the graph  is a connected induced subgraph of a relaxed block star.

 Consider now that the graph has minimal vertex separators of cardinalities 1 and 2. Specifically,
suppose
that  $G$ has one $mvs$ of cardinality $1$, $S_i$, and at least one $mvs$ of cardinality $2$, $S_j=\{a,b\}$.
Figure \ref{fig:mvs-1and2} shows the possible cases.
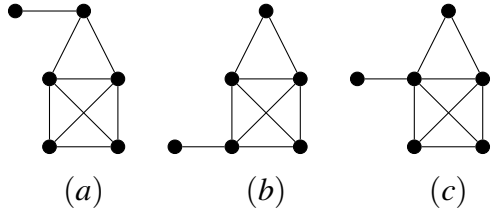
\begin{figure}[h]
\begin{center}
\begin{tikzpicture} [scale=.3,auto=left]
 \tikzstyle{every node}=[circle, draw, fill=black,
                        inner sep=1.8pt, minimum width=4pt]

 \node (a) at (9,2) {};
 \node (b) at (6,5)  {};
 \node (c) at (6,2)  {};
 \node (d) at (9,5) {};
 \node (e) at (7.5,8) {};
\node (f) at (3.5,2) {};

\node at (7.5,0) [draw=none,fill=none] {$(b)$};

\foreach \from/\to in {a/b,a/d,a/c,b/d,b/c,c/d,f/c,e/d,b/e}
\draw (\from) -- (\to);

\node (a1) at (1,2) {};
 \node (b1) at (-2,5)  {};
 \node (c1) at (-2,2)  {};
 \node (d1) at (1,5) {};
 \node (e1) at (-0.5,8) {};
\node (f1) at (-3.5,8) {};

\node at (-0.5,0) [draw=none,fill=none] {$(a)$};

\foreach \from/\to in {a1/b1,a1/d1,a1/c1,b1/d1,b1/c1,c1/d1,f1/e1,e1/d1,b1/e1}
\draw (\from) -- (\to);

\node (o) at (17,2) {};
 \node (p) at (14,5)  {};
 \node (q) at (14,2)  {};
 \node (r) at (17,5) {};
 \node (t) at (15.5,8) {};
  \node (s) at (11.5,5) {};

\node at (15.5,0) [draw=none,fill=none] {$(c)$};

\foreach \from/\to in {o/p,o/r,o/q,p/r,p/q,q/r,s/p,t/r,p/t}
\draw (\from) -- (\to);

 \end{tikzpicture}
\vskip -.5cm
\caption{Graphs with $mvs$ of cardinality 1 and 2.}
\label{fig:mvs-1and2}
\end{center}
\end{figure}

In Figures \ref{fig:mvs-1and2}(a) and \ref{fig:mvs-1and2}(b), $S_i \cap S_j = \emptyset$
and the  graphs contain the forbidden graph $F_9$ as an induced subgraph; so, they can not be induced subgraphs of graph $G$ due the condition $\lambda_2(G)<-1/2$.
We observe that, in this case,    the vertices adjacent simultaneously to $a$ and $b$ are  simplicial vertices.
For the graph in Figure \ref{fig:mvs-1and2}(c),
it holds that $\lambda_2 \approx -0.51210$ and so, it can appear as a connected induced subgraph of a relaxed block star.
 So, being $G$ a  Ptolemaic graph with $\mathrm{diam}(G)=2$ and at least one  $mvs$
of cardinality 2,
if $\lambda_2(G) < -1/2$ then $G$ has at most one $mvs$  of cardinality 1
that must belong to all $mvs$ of cardinality 2.
Hence, $G$ contains only one universal vertex.
The $mvs$ of cardinality 2 are such that
  they establish a full house graph or an induced subgraph of a full house graph
and all vertices  adjacent to  both vertices of the separators
are simplicial vertices.
We conclude that graph $G$ is a connected induced subgraph of a relaxed block graph.
\end{proof}

The next proposition establishes the equivalence between  the class of
Ptolemaic graphs of diameter 2 and that of the  \emph{quasi}-threshold graphs.
This equivalence  reinforces the fact  that any result obtained here for Ptolemaic graphs
of diameter 2 is valid for the more general class of cographs.

\begin{proposition} A graph $G$ is a Ptolemaic graph of diameter 2 if and only if $G$ is a connected \emph{quasi}-threshold graph.
\end{proposition}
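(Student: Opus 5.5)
We prove the two implications separately, using Lemma \ref{lem:pto} (Ptolemaic $=$ gem-free chordal $=$ distance-hereditary chordal) and the definition of quasi-threshold graphs as the $\{C_4,P_4\}$-free graphs. The key observation is that $P_4$ is the only obstruction separating "diameter at most 2" from "every connected induced subgraph has diameter at most 2".

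\emph{($\Rightarrow$)} Let $G$ be Ptolemaic with $\mathrm{diam}(G)=2$. Then $G$ is connected and chordal, so it is $C_4$-free, and it remains to show that $G$ is $P_4$-free. Suppose $H=G[\{u,v,w,x\}]$ is an induced path $u\,v\,w\,x$. Since $H$ is connected, distance heredity (Lemma \ref{lem:pto}(iii)) gives $d_G(u,x)=d_H(u,x)=3$, which contradicts $\mathrm{diam}(G)=2$. Hence $G$ is a connected quasi-threshold graph.

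\emph{($\Leftarrow$)} Let $G$ be a connected $\{C_4,P_4\}$-free graph.

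First, $G$ is chordal. An induced $C_k$ with $k\ge 5$ contains an induced $P_4$ (four consecutive vertices of the cycle), and $C_4$ is excluded by hypothesis.

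Second, $G$ is gem-free. The gem contains an induced $P_4$, namely its four non-universal vertices. Therefore, by Lemma \ref{lem:pto}(ii), $G$ is Ptolemaic.

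Third, $\mathrm{diam}(G)\le 2$. If some pair had $d_G(u,x)\ge 3$, the first four vertices of a shortest $u$--$x$ path would form an induced $P_4$, since a chord would shorten the path.

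Finally, we must rule out the degenerate case $\mathrm{diam}(G)\le 1$, i.e.\ $G$ complete. Here we adopt the convention implicit in the statement that diameter~2 means diameter at most~2, so that complete graphs are allowed. Alternatively, one can restrict both sides to non-complete graphs; a non-complete connected graph has diameter exactly 2 under the bound just shown.

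The main obstacle is not any hard step but this bookkeeping around complete graphs: the equivalence is literally true only with "diameter at most 2", or with non-complete graphs on the quasi-threshold side. I would state this explicitly in the proof. The remaining steps are the direct forbidden-subgraph checks above: $P_4$ inside $C_k$ for $k\ge5$, inside the gem, and inside any shortest path of length~3.
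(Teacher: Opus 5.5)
Your proof is correct, but the forward direction takes a different route from the paper's. The paper starts from an induced $P_4$ $abcd$ and uses $\mathrm{diam}(G)=2$ to pick a common neighbour $x$ of $a$ and $d$. It then argues that chordality forces both chords $xb$ and $xc$ on the resulting $5$-cycle, so $\{a,b,c,d,x\}$ induces a gem, contradicting gem-freeness (part (ii) of Lemma~\ref{lem:pto}). You instead use part (iii): an induced $P_4$ is itself a connected induced subgraph in which the endpoints are at distance $3$, so distance heredity contradicts the diameter bound at once. Both arguments rest on Howorka's equivalence. Yours is shorter and avoids the chord analysis, while the paper's exhibits the gem ($F_{11}$) explicitly as the obstruction, which fits the forbidden-subgraph style of the rest of the paper.

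For the converse, which the paper dismisses as straightforward, you supply the actual checks: $P_4$ inside $C_k$ for $k\ge 5$, inside the gem, and inside any shortest path of length $3$. Your caveat is also justified. Complete graphs such as $K_3$ are connected quasi-threshold (indeed Ptolemaic) graphs of diameter $1$. So the ``if'' direction holds only when ``diameter $2$'' is read as ``diameter at most $2$'', or when complete graphs are excluded; the paper's statement passes over this point.
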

\begin{proof}  If $G$ is a Ptolemaic graph of diameter 2 then it is a $P_4$-free graph, hence a \emph{quasi}-threshold graph.
Indeed, suppose by contradiction that $G$ has an induced $P_4$, $ a b c d$.
Then every pair of non-adjacent vertices must have a common neighbor.
Let $x$ be the neighbor of $a$ and $d$.
They form a cycle of size 5, and as $G$ is chordal, this cycle must have at least two chords.
Additional edges between the vertices $a,b,c,d$ are not allowed.
So, the sole possible edges are $\{x,b\}$ and $\{x,c\}$, forming the gem graph, which is a forbidden subgraph. The converse is straightforward.
 \end{proof}

\section*{Ptolemaic graphs with diameter 3 satisfying $\lambda_2 <-1/2$}

Before presenting the results of this subsection, we register here  the following general fact obtained from our previous studies for further reference.

\begin{lemma}\label{lem:full-diamond}
Let $G$ be a graph with $\lambda_2(G) < -1/2$. Then $G$ is a Ptolemaic graph and if $G$ has at least one $mvs$  of cardinality 2 then $G$ has  a full house graph or a diamond graph as an induced subgraph; furthermore,
the vertices adjacent to the vertices of the minimal vertex separators of cardinality 2 of $G$ are simplicial vertices.
\end{lemma}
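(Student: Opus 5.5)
The plan is to assemble the lemma almost entirely from earlier results, checking each of its three assertions in turn.

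\emph{Step 1: $G$ is Ptolemaic.} This is exactly Proposition~\ref{lem:Ptolemaic}. That proposition in turn rests on Lemma~\ref{lem:guochordal} and on the exclusion of the gem $F_{11}$ via Lemma~\ref{lem:pto}.

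\emph{Step 2: $G$ contains a diamond or a full house.} Let $S=\{a,b\}$ be a $mvs$ of cardinality 2. By Lemma~\ref{chordal-1}, $S$ is a clique, so $a$ and $b$ are adjacent. Since $S$ is a minimal $uv$-separator, both the component containing $u$ and the component containing $v$ have a vertex adjacent to both $a$ and $b$ (a standard property of minimal separators). Call these vertices $x$ and $y$. They lie in different components of $G-S$, so they are non-adjacent. Hence $\{a,b,x,y\}$ induces a diamond. When $G$ has diameter 2, Theorem~\ref{theo:pto-mvs2} gives the finer conclusion that the configuration around $S$ is a diamond or a full house. In general the diamond alone already suffices for the stated disjunction.

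\emph{Step 3: the common neighbours are simplicial.} Let $w$ be adjacent to both vertices of a cardinality-2 $mvs$ $S=\{a,b\}$, with $w\notin S$. I would argue by contradiction, assuming $w$ is not simplicial. Then $w$ has two non-adjacent neighbours. Since $w$ is adjacent to both $a$ and $b$, this forces $w$ to have a neighbour $z$ outside $S$ that is non-adjacent to $a$ or to $b$ (or $N(w)$ contains another separator). I would split into cases according to whether $z$ is adjacent to exactly one of $a,b$ or to neither.

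\begin{itemize}
\item If $z$ is adjacent to neither $a$ nor $b$, then $\{z,w,a,b\}$ together with a vertex $y$ on the other side of $S$ forms one of the configurations of Figure~\ref{fig:mvs-1and2}(a),(b). These contain $F_9$, which is excluded by Remark~\ref{rem:interl}.
\item If $z$ is adjacent to exactly one of $a,b$, we obtain a second $mvs$ of cardinality 2 meeting $S$, or an $S$ in a larger clique. This leads either to a gem (excluded by Lemma~\ref{lem:pto}), to a $mvs$ of multiplicity at least 2 (excluded by Proposition~\ref{lem:mvs-cardinal}), to $F_{13}$, or to the graphs of Figure~\ref{fig:mvs-2}(c),(d), which were shown in the proof of Theorem~\ref{theo:pto-mvs2} to have $\lambda_2>-1/2$.
\item If $w$ lies in a clique with three common neighbours of $S$, then a $mvs$ of cardinality 3 appears, which is excluded by Proposition~\ref{lem:mvs-cardinal3}.
\end{itemize}

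The main obstacle is Step 3 in arbitrary diameter. The arguments in the proof of Theorem~\ref{theo:pto-mvs2} used diameter 2, so I would need to check that every small witness subgraph used there (the $F_9$-containing graphs, the $F_{13}$ configuration, and Figure~\ref{fig:mvs-2}(c),(d)) has diameter at most 2, or is an isometric induced subgraph via the Ptolemaic (distance hereditary) property. That check is what justifies applying interlacing through Remark~\ref{rem:interl} without any diameter hypothesis on $G$. Since $G$ is Ptolemaic, every connected induced subgraph is isometric, so this check is automatic. With that observation, the case analysis transfers verbatim.
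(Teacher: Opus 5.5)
Your Steps 1 and 2 are correct, and your route differs from the paper's. The paper gets the diamond or full house by pointing back to the case analysis in the proof of Theorem~\ref{theo:pto-mvs2}, asserting it holds ``independently of the diameter''. It then dismisses simpliciality as ``straightforward''. You instead give a local argument that works for any chordal $G$.

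Your Step 2 is in effect Lemma~\ref{lem:block}: a chordal graph with a $mvs$ of cardinality 2 is not a block graph, so it contains an induced diamond. The existence of a common neighbour of $a,b$ in each full component of $G-S$ is a property of chordal graphs, not of minimal separators in general. You have chordality from Step 1, so this is fine.

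Your remark that distance-heredity makes every connected induced subgraph isometric is important. It is what lets Remark~\ref{rem:interl} apply to diameter-3 witnesses such as $F_9$, and it is exactly what the paper's ``independently of the diameter'' silently relies on.

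Step 3, however, is complete only in its first bullet. Fix $y$, a common neighbour of $a$ and $b$ in a full component of $G-S$ not containing $w$. Then $y$ is non-adjacent to $w$ and to every neighbour of $w$ outside $S$. If $w$ is not simplicial, take non-adjacent $p,q\in N(w)$. One of the following occurs.

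\emph{(i)} Some $z\in N(w)\setminus S$ is adjacent to neither $a$ nor $b$. Then $\{z,w,a,b,y\}$ induces $F_9$, as in your first bullet.

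\emph{(ii)} Some $z\in N(w)\setminus S$ is adjacent to $a$ but not to $b$. Then $a$ is adjacent to all of $z,w,b,y$, while $zwby$ is an induced $P_4$. So $\{z,w,a,b,y\}$ induces the gem, which Lemma~\ref{lem:pto} excludes. Your second bullet lists four possible outcomes without showing that any of them must occur; this one-line observation replaces it.

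\emph{(iii)} Otherwise $p,q\notin S$ and both are adjacent to $a$ and $b$. If, say, $p=a$, then $q$ would already fall under (i) or (ii). Now $\{a,b,w,p,q\}$ induces $K_5$ minus an edge, the graph of Figure~\ref{fig:mvs3}(a), with $\lambda_2=2-\sqrt{6}>-1/2$.

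Your third bullet describes a different configuration, and its implication is false. A clique containing $S$ together with several common neighbours of $S$ need not create a $mvs$ of cardinality 3: in Figure~\ref{fig:mvs-2}(c) the only $mvs$ is $S$. Moreover, $w$ is simplicial in that situation anyway.

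With these two repairs the case split is exhaustive, and each case yields a forbidden induced subgraph, so the lemma follows.
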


\begin{proof}
Let $G$ be a Ptolemaic graph with at least one  $mvs$ of cardinality 2. By the proof of Theorem \ref{theo:pto-mvs2} and observing Figures \ref{fig:mvs-2} and \ref{fig:mvs-1and2}, $G$ contains a full house or a diamond as an induced subgraph, independently of the diameter of the graph. Also,
it is straightforward  that the vertices adjacent
to the $mvs$ of cardinality 2 must be simplicial vertices.
\end{proof}

In the sequence, in order to study the case of Ptolemaic graphs with diameter 3 satisfying  $\lambda_2 <-1/2$ that are not block graphs,  a new class of those graphs must be defined.
A {\em triple block graph} $Pt_2(p,q)$, where $p,q \geq 2$, is a graph with three blocks such that one of them
is a full house (or a diamond),  the other two are cliques $K_p$ and $K_q$ and
these cliques join the full house by the two vertices of its $mvs$,  see Figure \ref{fig:PT1-PT2}.

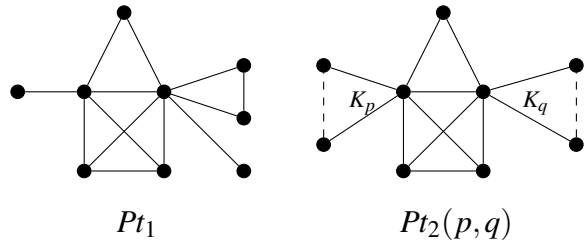
\begin{figure}[h]
\begin{center}
\begin{tikzpicture} [scale=.35,auto=left]
 \tikzstyle{every node}=[circle, draw, fill=black,
                        inner sep=1.8pt, minimum width=4pt]

 \node (o) at (17,2) {};
 \node (p) at (14,5)  {};
 \node (q) at (14,2)  {};
 \node (r) at (17,5) {};
 \node (t) at (15.5,8) {};
  \node (s) at (11.5,5) {};
  \node (x) at (20,2) {};
\node (y) at (20,4) {};
\node (z) at (20,6) {};

\foreach \from/\to in {o/p,o/r,o/q,p/r,p/q,q/r,s/p,r/x,r/y,r/z,y/z,t/p,t/r}
 \draw (\from) -- (\to);

\node (o1) at (29,2) {};
\node (p1) at (26,5)  {};
\node (q1) at (26,2)  {};
\node (r1) at (29,5) {};
 \node (t1) at (27.5,8) {};
 \node (s1) at (23,3) {};
\node (s2) at (23,6) {};
\node (y1) at (32.5,3) {};
\node (z1) at (32.5,6) {};

\foreach \from/\to in {o1/r1,o1/q1,o1/p1,p1/r1,p1/q1,q1/r1,s1/p1,s2/p1,r1/y1,r1/z1,t1/p1,t1/r1}
\draw (\from) -- (\to);

\draw[dashed] (s1) -- (s2) ;
 \draw[dashed] (y1) -- (z1) ;
\node at (24.5,4.5) [draw=none,fill=none] {$^{K_p}$};
\node at (31,4.5) [draw=none,fill=none] {$^{K_q}$};

\node at (16,0) [draw=none,fill=none] {$Pt_1$};
\node at (28,0) [draw=none,fill=none] {$Pt_2(p,q)$};

     \end{tikzpicture}
\caption{The graph $Pt_1$ and the triple block $Pt_2(p,q)$. }
 \label{fig:PT1-PT2}
\end{center}
\end{figure}

In Section~\ref{sec:spec-results} (Theorem \ref{the:Pt2}), we prove that the graph $Pt_1$
and every triple block graph $Pt_2(p,q)$ satisfy the condition
$\lambda_2 < -1/2$.

\begin{theorem}\label{theo:pto-mvs3}
Let $G$ be a   graph of diameter 3 with at least one $mvs$ of cardinality  2.
If $\lambda_2(G) < -1/2$ then
 $G$ is a connected induced  subgraph of  $Pt_1$ or of a $Pt_2(p,q)$.
\end{theorem}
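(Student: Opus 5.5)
We proceed as in the proof of Theorem~\ref{theo:pto-mvs2}: reduce to a structural analysis of Ptolemaic graphs, then exclude configurations by exhibiting small forbidden induced subgraphs.

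\emph{Reduction.} By Lemma~\ref{lem:guochordal} and Proposition~\ref{lem:Ptolemaic}, $G$ is Ptolemaic, hence distance hereditary. By Remark~\ref{rem:interl}, every connected induced subgraph $H$ of $G$ must satisfy $\lambda_2(H)<-1/2$. By Propositions~\ref{lem:mvs-cardinal3} and~\ref{lem:mvs-cardinal}, every $mvs$ has cardinality $1$ or $2$, and every $mvs$ of cardinality $2$ has multiplicity one. By Lemma~\ref{lem:full-diamond}, each $mvs$ $S=\{a,b\}$ of cardinality $2$ sits in a block that is a diamond or an induced subgraph of a full house. Moreover, the common neighbours of $a$ and $b$ are simplicial.

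\emph{Step 1: only one cardinality-$2$ separator.} Suppose $G$ has two $mvs$ $S_i,S_j$ of cardinality $2$.
\begin{itemize}
\item If $S_i\cap S_j=\emptyset$, then $F_{13}$ is induced, or the two blocks are joined by a path, which gives an induced $F_9$ or $F_{10}$ type configuration.
\item If $S_i\cap S_j=\{x\}$, the gem case is excluded, so $\{x\}$ is a cut vertex. The diameter-$3$ hypothesis then forces a vertex at distance $3$, which must lie beyond some block attached to a second cut vertex.
\end{itemize}
In each subcase I would extract a small induced subgraph (up to about $8$ vertices) and either identify it with one of $F_1,\dots,F_{13}$ or compute $\lambda_2>-1/2$ numerically, as the paper does elsewhere. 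I expect the conclusion to be that $G$ has exactly one $mvs$ $S=\{a,b\}$ of cardinality $2$, lying in a full house or diamond block $B$.

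\emph{Step 2: structure of the remaining cut vertices.} Since $\mathrm{diam}(G)=3$, the graph must contain cut vertices.
\begin{itemize}
\item A cut vertex outside $S$ whose block lies away from $B$ (as in Figures~\ref{fig:mvs-1and2}(a),(b)) produces an induced $F_9$, so it is excluded.
\item Hence every cut vertex belongs to $B$, and it is either $a$, $b$, or a simplicial vertex of the full house.
\item Pendant cliques attached at both $a$ and $b$ give the shape of $Pt_2(p,q)$.
\item A single pendant vertex at one vertex of $S$, combined with a clique at the other, gives the shape of $Pt_1$ after comparing with Figure~\ref{fig:PT1-PT2}.
\end{itemize}
I then need to rule out the remaining options:
\begin{itemize}
\item a pendant block at a simplicial vertex of $B$, which gives distance $3$ to the opposite side and should contain a forbidden subgraph from Figure~\ref{fig:grande} or a graph with $\lambda_2>-1/2$ checked numerically;
\item two or more blocks at the same vertex of $S$;
\item non-clique blocks, which are excluded because the only $mvs$ of cardinality $2$ is $S$;
\item blocks hanging off the pendant cliques, which would raise the diameter above $3$ or create a block-graph obstruction such as $F_1$--$F_4$, via Lemma~\ref{lem:block-graph}.
\end{itemize}

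\emph{Step 3: conclusion.} Collecting the surviving configurations, $G$ is an induced subgraph of $Pt_1$ or of some $Pt_2(p,q)$.

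\emph{Main obstacle.} The hardest part will be the exhaustive case analysis in Step 2. It requires a finite list of minimal "bad" extensions, each verified either as containing a listed $F_k$ or as having $\lambda_2>-1/2$ by direct computation. The key point is to argue minimality: any larger bad graph contains one of these small graphs as an induced subgraph, and distance heredity makes the interlacing argument valid for it. I would organise this step by the position of the vertex realising distance $3$ relative to $S$.
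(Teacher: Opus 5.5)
\textbf{Gap in Step 2: $Pt_1$ is misidentified, and you plan to exclude exactly its configuration.}

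Your reduction matches the paper, and so do the parts of Step 2 that force $S=\{a,b\}$ to be the middle edge of an induced $P_4$ (no pendant blocks at common neighbours of $a,b$, via $F_9$ and $F_{10}$) and that make the other blocks cliques. The genuine gap is that you have misread $Pt_1$. In Figure~\ref{fig:PT1-PT2}, $Pt_1$ is a full house with separator $\{a,b\}$. It has one pendant vertex at $a$ and \emph{two} further blocks at $b$, a pendant edge and a triangle, so $\mu(\{b\})=2$. What you call $Pt_1$ (a pendant vertex at one vertex of $S$, a clique at the other) is just $Pt_2(2,q)$.

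As a result, one item on your exclusion list, ``two or more blocks at the same vertex of $S$'', cannot be ruled out. Take $SP_1$, which is $Pt_1$ with a triangle vertex other than $b$ deleted.
\begin{itemize}
\item It has diameter $3$ and a cardinality-$2$ separator $S$, with two pendant blocks at $b$.
\item It satisfies $\lambda_2(SP_1)<-1/2$ by Theorem~\ref{theo:split}.
\item It is not an induced subgraph of any $Pt_2(p,q)$. There, the vertices at distance $3$ from a vertex on the $K_p$ side form a clique, whereas the two pendants at $b$ in $SP_1$ are non-adjacent and both at distance $3$ from the pendant at $a$.
\end{itemize}
So Step 2, carried out as planned, would either stall or reach the false conclusion that $G$ always embeds in some $Pt_2(p,q)$.

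\textbf{What is missing.} You need the multiplicity analysis of the cut vertices $\{a\}$ and $\{b\}$ that the paper performs.
\begin{itemize}
\item $\mu(\{a\})=\mu(\{b\})=1$ yields $Pt_2(p,q)$.
\item $\mu(\{b\})\ge 3$, or $\mu(\{a\})=\mu(\{b\})=2$, is excluded by the split graphs of Figure~\ref{fig:diam3C}.
\item In the mixed case $\mu(\{a\})=1$, $\mu(\{b\})=2$, the block at $a$ must be a single edge, since a triangle there gives $F_8$ (Figure~\ref{fig:diam3B}(d)). The blocks at $b$ are limited to a pendant edge and a triangle; for instance, enlarging the triangle to $K_4$ gives $\lambda_2\approx-0.49839$ (Figure~\ref{fig:diam3B}(c)). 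This case is precisely where $Pt_1$ comes from.
\end{itemize}

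\textbf{Step 1.} Your Step 1 is stated only as an expectation. The paper at least names the obstructions: $F_{13}$, and $F_9$ or $F_{10}$ in the graphs of Figure~\ref{fig:diam3A}.
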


\begin{proof}
By    Propositions \ref{lem:mvs-cardinal3} and \ref{lem:Ptolemaic},
if $\lambda_2(G) < -1/2$ then $G$ is a Ptolemaic graph for which  all the $mvs$  have cardinality 1 or 2.
By Proposition 6, 
 we know that the $mvs$ of cardinality 2 belong to a full house or a diamond.
By Proposition \ref{lem:mvs-cardinal}, for  all $mvs$ $S$ of $G$ with cardinality 2 it holds  $\mu(S) =1$. Also, being a  graph of  diameter 3, $G$ has an induced $P_4$, say $v_1v_2v_3v_4$.
Let suppose   that  $G$    has exactly  two $mvs$ of cardinality 2, $S_1$ and $S_2$.
Figure \ref{fig:diam3A} shows the  Ptolemaic graphs  with diameter 3 and minimum number of vertices  that obey all these conditions.
The graph (a) in Figure \ref{fig:diam3A} is the forbidden subgraph $F_{13}$; graphs (b) and (c) contain the forbidden subgraph $F_9$ and
 both the graphs (d) and (e) contain the forbidden subgraph $F_{10}$.
Hence, the graph $G$ cannot have more than one $mvs$ of cardinality 2.  Also, we may note that in graphs (d) and (e) of Figure \ref{fig:diam3A},  the $mvs$ are edges of  $P_4$.
Taking this fact into account, we conclude that  if the allowed $mvs$ is the edge
$\{v_1,v_2\}$, the graph contains the forbidden subgraph $F_{10}$.
So, the $mvs$ of cardinality 2 of  $G$  must be the edge $\{v_2,v_3\}$ of $P_4$.

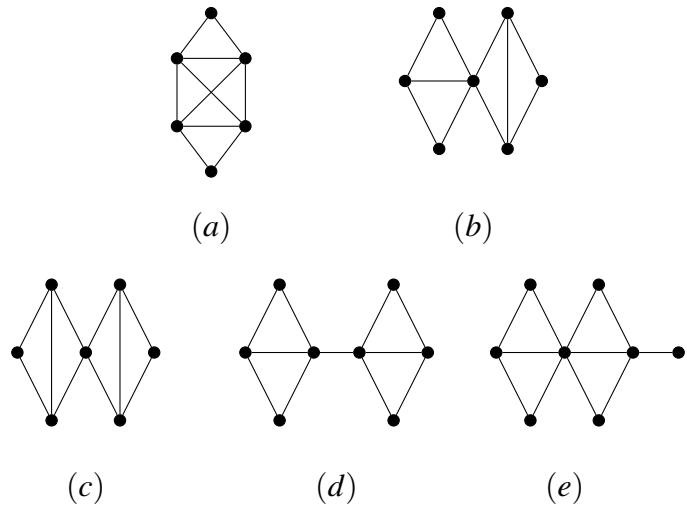
\begin{figure}[h]
\begin{center}
\begin{tikzpicture} [scale=.3,auto=left]
 \tikzstyle{every node}=[circle, draw, fill=black,
                        inner sep=1.5pt,  minimum width=4pt]

\node (x) at (-5,13) {};
 \node (y) at (-8,16)  {};
 \node (z) at (-8,13)  {};
\node (t) at (-5,16) {};
\node (u) at (-6.5,18) {};
\node (v) at (-6.5,11) {};

\node at (-6.5,8.5) [draw=none,fill=none] {$(a)$};

\foreach \from/\to in {x/y,x/z,x/t,x/v,y/u,y/z,y/t,z/t,z/v,t/u}
\draw (\from) -- (\to);

\node (a3) at (2,15) {};
\node (b3) at (5,15)  {};
\node (c3) at (8,15)  {};
\node (d3) at (3.5,18) {};
\node (e3) at (3.5,12) {};
\node (f3) at (6.5,12) {};
\node(g3) at (6.5,18){};

\node at (5,8.5) [draw=none,fill=none] {$(b)$};

\foreach \from/\to in {a3/b3,a3/d3,a3/e3,b3/d3,b3/e3,b3/g3,b3/f3,c3/g3,c3/f3,f3/g3}
\draw (\from) -- (\to);


\node (a5) at (-9,3) {};
\node (b5) at (-12,3)  {};
\node (c5) at (-15,3)  {};
\node (d5) at (-10.5,6) {};
\node (e5) at (-10.5,0) {};
\node (f5) at (-13.5,0) {};
\node(g5) at (-13.5,6){};

\node at (-12,-3) [draw=none,fill=none] {$(c)$};

\foreach \from/\to in {a5/d5,a5/e5,b5/d5,b5/e5,b5/g5,b5/f5,c5/g5,c5/f5,d5/e5,f5/g5}
\draw (\from) -- (\to);

\node (a2) at (-3.5,0) {};
\node (b2) at (-5,3)  {};
\node (d2) at (-2,3) {};
\node(d21) at (0,3) {};
\node (e2) at (-3.5,6) {};
\node (f2) at (3,3) {};
\node(g2) at (1.5,0){};
\node(h2) at (1.5,6){};

\foreach \from/\to in {a2/b2,a2/d2,b2/d2,e2/d2,b2/e2,f2/d21,h2/d21,g2/d21,h2/f2,g2/f2,d2/d21}
\draw (\from) -- (\to);

\node at (-1,-3) [draw=none,fill=none] {$(d)$};

\node (a4) at (7.5,0) {};
\node (b4) at (6,3)  {};
\node (d4) at (9,3) {};
\node(d41) at (12,3) {};
\node (e4) at (7.5,6) {};
\node (f4) at (14,3) {};
\node(g4) at (10.5,0){};
\node(h4) at (10.5,6){};

\foreach \from/\to in {a4/b4,a4/d4,b4/d4,b4/e4,d4/e4,d4/d41,d4/h4,d4/g4,d41/f4,
d41/h4,d41/g4}
\draw (\from) -- (\to);

\node at (9,-3) [draw=none,fill=none] {$(e)$};

 \end{tikzpicture}
\caption{Smallest Ptolemaic graphs with diameter $>2$ and two $mvs$ of cardinality 2.}
\label{fig:diam3A}
\end{center}
\end{figure}

 Let     $G$ be a graph with  $\mathrm{diam}(G)=3$ and $\lambda_2 <-1/2$ that has  one $mvs$ of cardinality 2 belonging to a full house or a diamond  corresponding to
  the edge $\{v_2,v_3\}$ of the induced $P_4$.  Then there are two  $mvs$, $S_i$ and $S_j$, of cardinality 1  corresponding to vertices $v_2, v_3$. If each one of these $mvs$ have multiplicity 1 then $G$ is a   triple block graph $Pt_2(p,q)$, for $p, q \geq 2$.

  Let us now consider that the two $mvs$ of cardinality 1  of $G$ satisfy $\mu(S_i) =1$ and $\mu(S_j)=2$.
The graph in Figure \ref{fig:diam3B}(a) is a connected induced subgraph of the split graph $SP_1$, which satisfies  $\lambda_2 <-1/2$ (Theorem \ref{theo:split}).
By adding one vertex to this graph, maintaining the multiplicities of the $mvs$,
 we obtain  graphs (b) and (d) in Figure \ref{fig:diam3B}.
The graph (b) has $\lambda_2 \approx -0.50229 <-1/2$ but  the same is not true for  graph (d)  because it has    $F_8$ as a subgraph.
Adding one more vertex to graph (b) in  Figure \ref{fig:diam3B},  we obtain graph (c), which has $\lambda_2 \approx -0.49839$, so it does not satisfy the required condition on $\lambda_2$.
 Finally, Figure \ref{fig:diam3C} shows two  Ptolemaic graphs with diameter 3 and $mvs$ $S_i$ and $S_j$ of cardinality 1:
 graph (a) is the smallest one
such that $\mu(S_i) =1$ and $\mu(S_j) =3$, and
 graph (b) is the smallest  with   $\mu(S_i) =\mu(S_j) =2$.
Both are split graphs but,  by Theorem \ref{theo:split}, they do not obey the condition $\lambda_2 <-1/2$.
We conclude that, if our graph $G$ satisfies $\lambda_2 <-1/2$,  $\mathrm{diam}(G)=3$ and it is not a block graph  then  it is a connected induced subgraph of $Pt_1$ or of a triple block graph $Pt_2(p,q)$, $p,q \geq 2$.
\end{proof}


\begin{figure}[h]
\begin{center}
\begin{tikzpicture} [scale=.3,auto=left]
 \tikzstyle{every node}=[circle, draw, fill=black,
                        inner sep=1.5pt, minimum width=4pt]

\node (a1) at (1,10) {};
 \node (b1) at (3,10)  {};
\node (c1) at (6,10)  {};
\node (d1) at (8,10) {};
 \node (e1) at (7,12) {};
\node (f1) at (4.5,13) {};
\node (g1) at (4.5,7) {};

\node at (4.5,4) [draw=none,fill=none] {$(a)$};

\foreach \from/\to in {a1/b1,b1/c1,b1/f1,b1/g1,c1/d1,c1/e1,c1/f1,c1/g1}
\draw (\from) -- (\to);

\node (a2) at (11,10) {};
 \node (b2) at (13,10)  {};
\node (c2) at (16,10)  {};
\node (d2) at (18,10) {};
 \node (e2) at (17,12) {};
\node (f2) at (14.5,13) {};
\node (g2) at (14.5,7) {};
\node (h2) at (17,8) {};

\node at (14.5,4) [draw=none,fill=none] {$(b)$};

\foreach \from/\to in {a2/b2,b2/c2,b2/f2,b2/g2,c2/d2,c2/e2,c2/f2,c2/g2,c2/h2,d2/h2}
\draw (\from) -- (\to);

\node (a3) at (21,10) {};
 \node (b3) at (23,10)  {};
\node (c3) at (26,10)  {};
\node (d3) at (28,10) {};
 \node (e3) at (27,12) {};
\node (f3) at (24.5,13) {};
\node (g3) at (24.5,7) {};
\node (h3) at (27,8) {};
\node (i3) at (29,8.5) {};

\node at (24,4) [draw=none,fill=none] {$(c)$};

\foreach \from/\to in {a3/b3,b3/c3,b3/f3,b3/g3,c3/d3,c3/e3,c3/f3,c3/g3,c3/h3,d3/h3,
i3/c3,i3/d3,i3/h3}
\draw (\from) -- (\to);

\node (a4) at (31,11) {};
 \node (b4) at (33,10)  {};
\node (c4) at (36,10)  {};
\node (d4) at (38,10) {};
 \node (e4) at (37,12) {};
\node (f4) at (34.5,13) {};
\node (g4) at (34.5,7) {};
\node (h4) at (31,9) {};

\node at (36,4) [draw=none,fill=none] {$(d)$};

\foreach \from/\to in {a4/b4,a4/h4,b4/c4,b4/f4,b4/g4,b4/h4,c4/d4,c4/e4,c4/f4,c4/g4}
\draw (\from) -- (\to);

 \end{tikzpicture}
\caption{Smallest graphs of diameter 3  with two $mvs$ of cardinality 1, one  of multiplicity 1 and one with multiplicity 2.}
\label{fig:diam3B}
\end{center}
\end{figure}
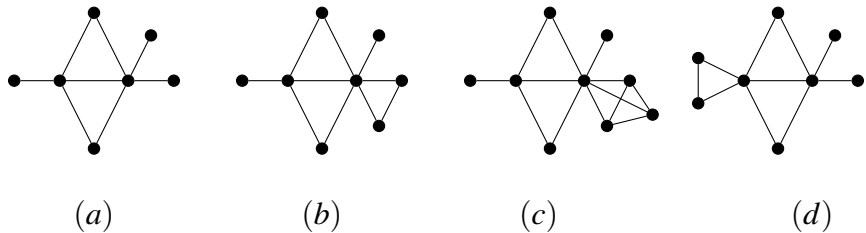

\begin{figure}[h]
\begin{center}
\begin{tikzpicture} [scale=.3,auto=left]
 \tikzstyle{every node}=[circle, draw, fill=black,
                        inner sep=1.5pt, minimum width=4pt]

\node (a1) at (1,10) {};
 \node (b1) at (3,10)  {};
\node (c1) at (6,10)  {};
\node (d1) at (8,10) {};
 \node (e1) at (7,12) {};
\node (f1) at (4.5,13) {};
\node (g1) at (4.5,7) {};
\node (h1) at (7,8) {};

\node at (4.5,4) [draw=none,fill=none] {$(a)$};

\foreach \from/\to in {a1/b1,b1/c1,b1/f1,b1/g1,c1/d1,c1/e1,c1/f1,c1/g1,c1/h1}
\draw (\from) -- (\to);

\node (a2) at (11,10) {};
 \node (b2) at (13,10)  {};
\node (c2) at (16,10)  {};
\node (d2) at (18,10) {};
 \node (e2) at (17,12) {};
\node (f2) at (14.5,13) {};
\node (g2) at (14.5,7) {};
\node (h2) at (11,12) {};

\node at (14.5,4) [draw=none,fill=none] {$(b)$};

\foreach \from/\to in {a2/b2,b2/c2,b2/f2,b2/g2,c2/d2,c2/e2,c2/f2,c2/g2,b2/h2}
\draw (\from) -- (\to);

 \end{tikzpicture}
\caption{Smallest graphs with  $mvs$ of different multiplicities.}
\label{fig:diam3C}\end{center}
\end{figure}
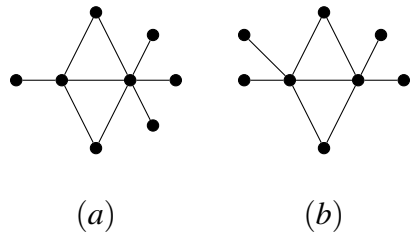

\begin{proposition}\label{prop:diameter4}
Let $G$ be a chordal graph with at least one $mvs$ of cardinality 2.
If $\lambda_2(G) < -1/2$ then the  diameter of $G$ is at most equal to 3.
\end{proposition}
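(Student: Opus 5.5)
Suppose, for contradiction, that $G$ is chordal, has an $mvs$ $S=\{a,b\}$ of cardinality 2, satisfies $\lambda_2(G)<-1/2$, and has $\mathrm{diam}(G)\ge 4$. We will exhibit a connected induced subgraph $H$ of $G$ with $\lambda_2(H)>-1/2$. By Proposition~\ref{lem:Ptolemaic}, $G$ is Ptolemaic, hence distance hereditary. So Remark~\ref{rem:interl} applies to every connected induced subgraph of $G$, even one of diameter $4$.

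By Propositions~\ref{lem:mvs-cardinal3} and \ref{lem:mvs-cardinal}, every $mvs$ has cardinality at most 2, and those of cardinality 2 have multiplicity one. By Lemma~\ref{lem:full-diamond}, $S$ lies in a diamond (or full house) block. All common neighbours of $a$ and $b$ outside $S$ are simplicial, so any path leaving that block must exit through $a$ or $b$ alone. Moreover, the argument in the proof of Theorem~\ref{theo:pto-mvs3} via the forbidden subgraphs $F_{9},F_{10},F_{13}$ shows that $G$ has exactly one $mvs$ of cardinality 2, since this part of the argument only used local configurations. Hence outside this single block $G$ looks like a block graph attached at $a$ and $b$, with cut vertices as the remaining separators.

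Next we take a diametral pair $u,w$ with $d_G(u,w)\ge 4$ and an induced shortest path $P$ between them. We locate $P$ relative to the diamond $\{a,b,c,d\}$, where $c,d$ are the common neighbours of $a$ and $b$.

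\textbf{Case 1: $P$ meets $\{a,b,c,d\}$ in at most one vertex, or does not meet it.} Then $G$ contains an induced path on 5 vertices together with the diamond attached at some vertex. We take $H$ to be a shortest connected subgraph containing the diamond and a $P_5$ whose endpoints are far from the diamond. This yields a graph containing $F_4$, $F_3$ or $F_{10}$, according to where the attachment occurs.

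\textbf{Case 2: $P$ passes through the edge $ab$.} Then there are vertices $x$ at distance $\ge 2$ from $b$ hanging off $a$, or symmetrically off $b$. The minimal such $H$ is a diamond with a pendant $P_2$ at $a$ and a pendant vertex at $b$, on 7 vertices. We identify it with a forbidden graph from Figure~\ref{fig:grande}, for instance $F_{10}$ extended, or $F_{6}$/$F_{7}$-like, or else compute $\lambda_2(H)>-1/2$ directly.

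The main obstacle is making the case analysis exhaustive: we must show that every diameter-$4$ configuration contains one of finitely many small graphs. We plan to do this by taking minimal counterexamples (deleting simplicial vertices not needed for the distance-$4$ pair or for $S$), which reduces to graphs on at most 8 vertices. For each of these we either match a graph in Figure~\ref{fig:grande} or compute $\lambda_2$ numerically. In particular, the graph consisting of a diamond on $\{a,b,c,d\}$ with a path $a x y$ and a pendant $z$ at $b$ is the key one, and we expect to verify $\lambda_2>-1/2$ for it by direct computation.
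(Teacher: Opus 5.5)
There is a genuine gap: all the content of your argument sits in the case analysis, and that analysis is neither exhaustive nor justified. Your split into ``$P$ meets the diamond in at most one vertex'' and ``$P$ passes through $ab$'' misses, for instance, a shortest path $c,a,x,y,z$ that starts at a simplicial common neighbour of $a$ and $b$. In Case~1 you give no argument that one obtains $F_3$, $F_4$ or $F_{10}$ ``according to where the attachment occurs''. The diamond may be joined to $P$ by an arbitrarily long path, and a vertex adjacent to an interior vertex of $P$ may also be adjacent to that vertex's path-neighbours, so tree obstructions like $F_3,F_4$ need not appear. The promised reduction to graphs on at most 8 vertices is only announced, so the ``main obstacle'' you name is left open.

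Separately, your claim that $G$ has exactly one $mvs$ of cardinality 2, because the argument of Theorem~\ref{theo:pto-mvs3} is ``local'', is not sound. The relaxed block stars $G(r,p,q)$ with $q\ge 2$ have several $mvs$ of cardinality 2 sharing the universal vertex and still satisfy $\lambda_2<-1/2$ (Theorem~\ref{the:main}), so that exclusion genuinely depends on the diameter. You do not need it anyway.

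The missing idea is that you never need a diametral path. Instead, show that every vertex lies within distance $1$ of $S=\{a,b\}$.
\begin{itemize}
\item Suppose not. Pick $z$ with $d_G(z,S)=2$ and a neighbour $y$ of $z$ with $d_G(y,S)=1$.
\item If $y$ were adjacent to both $a$ and $b$, it would be simplicial (Lemma~\ref{lem:full-diamond}). Then $N_G(y)$ would be a clique containing $a$ and $z$, forcing $z$ adjacent to $a$. So $y$ is adjacent to exactly one of them, say $a$.
\item Since $c$ and $d$ are simplicial and adjacent to $b$ and $a$, neither $y$ nor $z$ can be adjacent to $c$ or $d$.
\item Hence $\{a,b,c,d,y,z\}$ induces exactly $F_{10}$, a diamond with a pendant $P_2$ at a vertex of its separator. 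This is excluded, because $G$ is Ptolemaic and hence distance hereditary, so interlacing applies.
\end{itemize}
Every vertex is therefore $a$, $b$, or adjacent to one of them. Any two vertices are then joined through the edge $ab$ by a walk of length at most $3$, so $\mathrm{diam}(G)\le 3$.

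This is what the paper's terse proof does: simplicial common neighbours plus the single obstruction $F_{10}$. Your Case~1 simply cannot occur once this is known. Your Case~2 is immediately $F_{10}$. Your ``key'' 7-vertex graph (diamond, path $a\,x\,y$, pendant $z$ at $b$) already contains $F_{10}$ as an induced subgraph after deleting $z$, so no eigenvalue computation is needed.
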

\begin{proof}
Let $\mathbb S$ be the set of minimal vertex separators of $G$.
Let $S \in \mathbb S$ be a $mvs$ with cardinality 2;
since the adjacent vertices of $S$ must be simplicial vertices (Lemma \ref{lem:full-diamond}),
$S$ must be an edge of an induced path $v_1v_2v_3v_4v_5$.
In this case  $G$ contains the forbidden subgraph $F_{10}$ and then, it has $\lambda_2(G) > -1/2$, a contradiction.\
\end{proof}

\section{Spectral  results on graphs satisfying $\lambda_2<-1/2$}
\label{sec:spec-results}

In the previous section, we described the structures of graphs with diameters 2 and 3 satisfying $\lambda_2<-1/2$ and possessing at least one $mvs$ of cardinality 2, which, therefore, are not block graphs. For the latter, Lemma \ref{lem:block} provides a complete characterization of those that satisfy the required condition on $\lambda_2$. In our constructions, which focused, in addition to the diameter, on aspects of the minimal vertex separators presented in the structure of graphs satisfying the above spectral condition, we introduced the relaxed block star graphs  and also the (Ptolemaic)  graphs $Pt_1$ and triple blocks $Pt_2(p,q)$, where $p,q\geq 2$. Now, this section is dedicated  to showing that these graphs actually satisfy the condition $\lambda_2<-1/2$. We begin by recalling other spectral properties of the distance matrix of graphs.

 \begin{lemma}(Lemmas 3.4 and 3.5 \cite{LuHuangHuang2017}\label{lem:lemasLuHH})
  Let $G=(V,E)$ be a  graph on $n$ vertices. It holds that:
  \begin{enumerate}
      \item[(i)] if $S = \{v_1, \ldots , v_r\} \subset V$ $ (r \geq  2)$
induces a clique of $G$ with $N_G(v_i )\setminus S = N_G(v_j ) \setminus S$ for  $1 \leq  i, j \leq p$  then $-1$ is an
eigenvalue of $\mathbf{D}(G)$ with multiplicity at least $r-1$.

\item[(ii)] if $S = mK_r$ $(m \geq  2)$ is an
induced subgraph of $G$ with $N_G(u) \setminus V(S) = N_G(v) \setminus V(S)$ for any vertices  $u, v \in V(S)$, then
$-(r + 1)$ is an eigenvalue of $\mathbf{D}(G)$ with multiplicity at least $m -1$.
 \end{enumerate}
 \end{lemma}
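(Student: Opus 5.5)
We prove part (i) by exhibiting explicit eigenvectors, and part (ii) by the same idea applied to vectors that are constant on each copy of $K_r$.

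\textbf{Part (i).} For $2\le j\le r$ set $x^{(j)}=e_{v_1}-e_{v_j}$. Compute $\mathbf{D}(G)x^{(j)}$ coordinate by coordinate.

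First take $w\notin S$. The hypothesis $N_G(v_1)\setminus S=N_G(v_j)\setminus S$ gives $d_G(w,v_1)=d_G(w,v_j)$. To see this, note that a shortest path from $w$ to $v_1$ leaves $S$ through a last vertex outside $S$. That vertex is adjacent to $v_1$, hence to $v_j$, or else the path enters $S$ at some $v_k$ and then uses one more edge inside the clique. In either case the same length is achieved to $v_j$, and symmetry gives equality. Hence $(\mathbf{D}x^{(j)})_w=0$.

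Next take $w=v_k\in S$. Since $S$ is a clique, distances within $S$ are $1$ off the diagonal. For $k\ne 1,j$ the entry is $1-1=0$. For $k=1$ it is $0-1=-1$, and for $k=j$ it is $1-0=1$. So $\mathbf{D}x^{(j)}=-x^{(j)}$.

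The $r-1$ vectors $x^{(j)}$ are linearly independent, so $-1$ has multiplicity at least $r-1$. The index range "$1\le i,j\le p$" in the statement should read $r$.

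\textbf{Part (ii).} Let the copies of $K_r$ be $C_1,\dots,C_m$, with vertex sets $V_1,\dots,V_m$. Define $\mathbf{1}_{V_i}$ as the indicator vector of $V_i$, and for $2\le i\le m$ set $y^{(i)}=\mathbf{1}_{V_1}-\mathbf{1}_{V_i}$.

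The key step is to determine the distances inside $V(S)$. Vertices in different copies are nonadjacent, since $S$ is induced as a disjoint union. Because $G$ is connected and all vertices of $S$ share the same outside neighbourhood $N$, that set $N$ is nonempty whenever $G\ne S$. So any two vertices of $S$ in different copies are at distance exactly $2$, through a common neighbour in $N$. Vertices in the same copy are at distance $1$.

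For $w\notin V(S)$, the same last-vertex argument as in (i) shows that $d_G(w,u)$ is independent of $u\in V(S)$. Hence $(\mathbf{D}y^{(i)})_w=r\cdot c-r\cdot c=0$, where $c$ is that common distance.

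For $w\in V_1$, the $V_1$-part contributes $(r-1)\cdot 1$ and the $V_i$-part contributes $-r\cdot 2$, giving $r-1-2r=-(r+1)$. Symmetrically, for $w\in V_i$ we get $2r-(r-1)=r+1$. For $w\in V_k$ with $k\ne 1,i$ we get $2r-2r=0$. Thus $\mathbf{D}y^{(i)}=-(r+1)y^{(i)}$.

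The vectors $y^{(2)},\dots,y^{(m)}$ have disjoint supports apart from $V_1$, so they are independent. This gives multiplicity at least $m-1$.

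\textbf{Main subtlety.} The one delicate step is justifying that the distance from $w\notin V(S)$ to $u\in V(S)$ does not depend on $u$. A shortest path can be taken to enter $V(S)$ only at its final vertex. Its penultimate vertex then lies in $N$, which is adjacent to all of $V(S)$, and that settles it.
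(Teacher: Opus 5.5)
Your proof is correct. The vectors $e_{v_1}-e_{v_j}$ and $\mathbf{1}_{V_1}-\mathbf{1}_{V_i}$ are genuine eigenvectors for $-1$ and $-(r+1)$, your shortest-path argument that every $w$ outside $S$ (resp.\ $V(S)$) is equidistant from the twin vertices is sound, and you are right that the index range should read $1\le i,j\le r$.

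The paper gives no proof of its own here; it simply quotes Lemmas 3.4 and 3.5 of Lu, Huang and Huang. Your explicit-eigenvector verification is the standard argument, and it is the same device the paper uses in Lemma~\ref{fator3} and Theorem~\ref{the:Pt2}. One small tightening in (ii): the proviso ``whenever $G\ne S$'' is unnecessary. Since $G$ is connected and $mK_r$ with $m\ge 2$ is not, the common outside neighbourhood $N$ is always nonempty.
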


 The theory of equitable partitions for symmetric matrices can be found in \cite{CRS2010}. The particular case of the distance matrix is studied in detail in \cite{LuHuangHuang2017}. Here,  we recall the main concepts and results of this theory, for further reference.

Denote by $d(v, S) = \sum_{u\in S} d_G(u, v)$, where $v \in V$ and $S$ is a non-empty subset
of $V$, the vertex set of a given graph $G$.
The  partition  $\pi=\{ V_1,
V_2, \ldots , V_k\}$ of $V$ is called a \textit{distance equitable partition}  if, for any $v \in V_i$, $d(v, V_j ) = b_{ij}$
is a constant depending only on $i, j$, for  $1 \leq i, j \leq k$. Here the matrix $\mathbf{F}_\pi(G) = [b_{ij} ]_{k\times k}$ is
called the \textit{distance divisor matrix} of $G$ with respect to $\pi$.

\begin{lemma}\cite{LuHuangHuang2017}\label{lem:equit}
Let $G=(V,E)$ be a  graph with distance matrix  $\mathbf{D}$, and let $\pi= \left\{
V_1, V_2,  \ldots \right. $ $\left.\ldots,  V_k \right\}$ be a distance equitable partition of $V$ with distance divisor matrix
$\mathbf{F}_\pi$.
 Then $\det(x \mathbf{I} - \mathbf{F}_\pi)$ divides
 $\det(x\mathbf{I} - \mathbf{D})$. Furthermore, $\lambda_1(G)$ is an eigenvalue of $\mathbf{F}_\pi$ and also, the largest one.
\end{lemma}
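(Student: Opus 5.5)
We prove the lemma from the definition of a distance equitable partition by realizing $\mathbf{F}_\pi$ as the restriction of $\mathbf{D}$ to an invariant subspace. Let $\mathbf{P}$ be the $n\times k$ characteristic matrix of $\pi$: the $(v,j)$ entry is $1$ if $v\in V_j$ and $0$ otherwise. For $v\in V_i$ the $(v,j)$ entry of $\mathbf{D}\mathbf{P}$ is $\sum_{u\in V_j} d_G(v,u)=d(v,V_j)=b_{ij}$, which is also the $(v,j)$ entry of $\mathbf{P}\mathbf{F}_\pi$. Hence
\[
\mathbf{D}\mathbf{P}=\mathbf{P}\mathbf{F}_\pi .
\]
The columns of $\mathbf{P}$ are nonzero with disjoint supports, so they are linearly independent. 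Their span $W$, the vectors constant on each cell, is therefore a $k$-dimensional $\mathbf{D}$-invariant subspace. In the basis given by the columns of $\mathbf{P}$, the restriction $\mathbf{D}|_W$ has matrix $\mathbf{F}_\pi$.

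For the divisibility, we use that $\mathbf{D}$ is symmetric, so $W^\perp$ is also $\mathbf{D}$-invariant. Extend the columns of $\mathbf{P}$ to a basis of $\mathbb{R}^n$ by a basis of $W^\perp$. In this basis $\mathbf{D}$ is similar to a block diagonal matrix $\mathrm{diag}(\mathbf{F}_\pi,\mathbf{B})$. Consequently $\det(x\mathbf{I}-\mathbf{D})=\det(x\mathbf{I}-\mathbf{F}_\pi)\det(x\mathbf{I}-\mathbf{B})$, which gives the divisibility. (Alternatively, a block upper triangular form, obtained from invariance of $W$ alone, already suffices.) A byproduct is that every eigenvalue of $\mathbf{F}_\pi$ is real, being an eigenvalue of $\mathbf{D}$.

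For the statement about $\lambda_1(G)$ we use Perron--Frobenius. Since $G$ is connected, $\mathbf{D}$ is a nonnegative irreducible matrix (all off-diagonal entries are positive once $n\ge 2$; the case $n=1$ is trivial). So $\lambda_1(G)$ is simple and has an eigenvector $\x$ with all entries positive. The entries of $\mathbf{F}_\pi$ are sums of entries of $\mathbf{D}$, so $\mathbf{F}_\pi$ is nonnegative, and it is irreducible for $k\ge2$ because each $b_{ij}$ with $i\neq j$ is positive. Let $\mathbf{y}>0$ be its Perron vector with eigenvalue $\rho(\mathbf{F}_\pi)$. Then $\mathbf{P}\mathbf{y}$ is a positive vector with $\mathbf{D}\mathbf{P}\mathbf{y}=\mathbf{P}\mathbf{F}_\pi\mathbf{y}=\rho(\mathbf{F}_\pi)\mathbf{P}\mathbf{y}$. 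A nonnegative irreducible matrix has, up to scaling, only one positive eigenvector, the Perron vector. Hence $\rho(\mathbf{F}_\pi)=\lambda_1(G)$, so $\lambda_1(G)$ is an eigenvalue of $\mathbf{F}_\pi$. It is the largest one, since all eigenvalues of $\mathbf{F}_\pi$ are eigenvalues of $\mathbf{D}$ and so are at most $\lambda_1(G)$.

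The only step needing care is the last one. One must either invoke uniqueness of positive eigenvectors in Perron--Frobenius, or argue directly that $\x$ is constant on each cell: the simplicity of $\lambda_1$ and invariance under $\mathbf{D}$ do not immediately force this, so I would rely on the $\mathbf{P}\mathbf{y}$ argument above. The remaining steps are routine linear algebra.
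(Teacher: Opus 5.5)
Your proof is correct. The paper itself gives no proof of this lemma; it quotes the result from \cite{LuHuangHuang2017}, so there is no internal argument to compare against. What you wrote is the standard quotient-matrix proof behind that citation. The identity $\mathbf{D}\mathbf{P}=\mathbf{P}\mathbf{F}_\pi$ makes the cellwise-constant vectors a $\mathbf{D}$-invariant subspace on which $\mathbf{D}$ acts as $\mathbf{F}_\pi$, which gives the divisibility. Perron--Frobenius then identifies $\rho(\mathbf{F}_\pi)$ with $\lambda_1(G)$.

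Two small remarks. First, you only claimed irreducibility of $\mathbf{F}_\pi$ for $k\ge 2$. For $k=1$ there is nothing to prove: $y=(1)$ and $\mathbf{P}y=\mathbf{1}$ is a positive eigenvector of $\mathbf{D}$, so your argument still goes through.

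Second, your closing caveat is unnecessary. Because $W$ and $W^\perp$ are both $\mathbf{D}$-invariant, the orthogonal projection of the Perron vector of $\mathbf{D}$ onto $W$ is again an eigenvector for $\lambda_1(G)$. It is nonzero, since its inner product with $\mathbf{1}\in W$ equals that of the Perron vector, which is positive. Writing this projection as $\mathbf{P}z$ and using that $\mathbf{P}$ has full column rank gives $\mathbf{F}_\pi z=\lambda_1(G)z$ directly. This route avoids the Perron vector of $\mathbf{F}_\pi$ and does not even use simplicity of $\lambda_1(G)$.
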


\section*{Relaxed block stars satisfy $\lambda_2 <-1/2$ } \label{sec:main}

In Section~\ref{sec:4-struct}, in the study of Ptolemaic graphs of diameter~2
satisfying $\lambda_2 < -1/2$ (see Theorem~\ref{theo:pto-mvs2}), we introduced
the class of graphs that we call \emph{relaxed block stars}. We now aim to show
that these graphs do indeed satisfy the spectral condition on $\lambda_2$.
In this way, the characterization of graphs of diameter~2 whose distance matrix
has second largest eigenvalue less than $-1/2$ will be complete,
see Corollary~\ref{cor:caract-diam2}.

To achieve this goal, we apply Descartes' Rule of Signs and Sturm's Theorem,
which will be recalled in due course.

First, we observe that relaxed block star graphs can be described as a specific
join construction, as follows.   Indeed, a relaxed block star is a graph
of the form
\begin{equation}\label{eq:geral-relax}
K_1 \oplus \left(
(p_1K_{r_1} \cup \cdots \cup p_kK_{r_k})
\cup q\bigl(K_1 \oplus (K_1 \cup K_2)\bigr)
\cup s\, P_3
\right),
\end{equation}
for any positive integers
$k, p_1,\ldots,p_k, r_1,\ldots,r_k, q,$ and $s$.

For example, the graph in Figure~\ref{fig:examplerelaxed} is
$K_1 \oplus \bigl((K_1 \cup K_4) \cup (K_1 \oplus (K_1 \cup K_2)) \cup P_3\bigr)$.
Note that $P_3$ is an induced subgraph of $K_1 \oplus (K_1 \cup K_2)$.
Moreover, for $1 \le i \le k$, each $K_{r_i}$ is an induced subgraph of $K_r$,
where $r=\max_{1 \le i \le k} r_i$. Taking into account
Remark~\ref{rem:interl}, the fact that any relaxed block star satisfies
$\lambda_2 < -1/2$ is therefore a straightforward consequence of the following
theorem.

\begin{theorem}\label{the:main}
Let
\begin{equation}\label{eq:Grpq}
G = G(r,p,q) = K_1 \oplus \bigl(pK_r \cup q(K_1 \oplus (K_1 \cup K_2))\bigr),
\end{equation}
where $r,p,q \ge 1$ are positive integers. Then the second largest
$\mathbf{D}$-eigenvalue of $G(r,p,q)$ is less than $-1/2$.
\end{theorem}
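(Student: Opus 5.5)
The plan is to determine the whole $\mathbf{D}$-spectrum of $G(r,p,q)$. We split $\mathbb{R}^{V}$ into eigenvectors that are "local", which can be handled by hand or by Lemma \ref{lem:lemasLuHH}, and a small quotient part coming from a distance equitable partition (Lemma \ref{lem:equit}).

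\emph{Notation.} Let $u$ be the universal vertex. Write each copy of $K_1\oplus(K_1\cup K_2)$ as a paw with centre $c$, pendant $x$, and triangle vertices $y,z$. Since $\mathrm{diam}(G)=2$, we have $\mathbf{D}=2(\mathbf{J}-\mathbf{I})-\mathbf{A}$, and every paw or clique induces a subgraph whose distances agree with those in $G$.

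\emph{Local eigenvectors.} These come in four families.
\begin{itemize}
\item For $r\ge 2$, Lemma \ref{lem:lemasLuHH}(i) applied to each $K_r$ gives the eigenvalue $-1$ with multiplicity $p(r-1)$.
\item Applied to each pair $\{y,z\}$ of a paw, the same lemma gives $-1$ with multiplicity $q$.
\item Lemma \ref{lem:lemasLuHH}(ii) applied to $pK_r$ gives $-(r+1)\le -2$ with multiplicity $p-1$.
\item For the $q$ paws, take a vector equal to $w\in\mathbb{R}^4$ on one paw, to $-w$ on another, and $0$ elsewhere. A direct computation shows that $\mathbf{D}$ acts on such vectors as $\mathbf{D}(\mathrm{paw})-2\mathbf{J}_4=-2\mathbf{I}-\mathbf{A}(\mathrm{paw})$. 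The cross-paw entries are all $2$, so they contribute $-2\mathbf{J}$, and the entry at $u$ cancels. This gives eigenvalues $-2-\mu$, with multiplicity $q-1$ each, for $\mu$ running over the adjacency spectrum of the paw.
\end{itemize}

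\emph{The delicate point in the paw family.} The paw's adjacency characteristic polynomial is $x^4-4x^2-2x+1$. We need its least root to exceed $-3/2$. This holds because the polynomial takes the value $1/16>0$ at $-3/2$ and is negative at $-1$, and its other roots lie in $(-1,\infty)$. So all these eigenvalues are $<-1/2$; in fact they are at most about $-0.52$, since $\mu_{\min}\approx-1.48$.

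\emph{Quotient part.} Take the partition $\pi=\{\{u\},V(pK_r),X,C,Y\}$, where $X,C,Y$ collect respectively all pendants, all centres, and all triangle vertices $y,z$ of the $q$ paws. This is distance equitable. Its $5\times5$ divisor matrix $\mathbf{F}_\pi$ has entries that are linear in $p,q,r$; for example, the row of $u$ is $(0,pr,q,q,2q)$. The local eigenvectors above, together with $\mathbf{F}_\pi$, account for all $1+pr+4q$ dimensions. By Lemma \ref{lem:equit}, the remaining eigenvalues are the roots of $f(x)=\det(x\mathbf{I}-\mathbf{F}_\pi)$, and $\lambda_1$ is the largest of them. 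So it remains to show that $f$ has exactly one root $\ge -1/2$.

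\emph{Main obstacle.} The hard step is showing that $f$ has only one root $\ge -1/2$ for all $r,p,q\ge1$. I would substitute $x=y-\tfrac12$ and expand $g(y)=f(y-\tfrac12)$. The goal is to check that the coefficient sequence of $g$ has exactly one sign change, with all coefficients polynomials in $p,q,r$ of controlled sign. Descartes' Rule of Signs would then give exactly one positive root of $g$, i.e.\ exactly one root of $f$ greater than $-1/2$, and $g(0)\neq0$ would exclude $-1/2$ itself. If a middle coefficient turns out not to have a fixed sign, I would instead build the Sturm sequence of $f$ and count sign changes at $-1/2$ and at $+\infty$, possibly separating small cases such as $r=1$ or $q=1$ for direct computation. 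Combining the quotient part with the local eigenvalues proves $\lambda_2(G(r,p,q))<-1/2$.
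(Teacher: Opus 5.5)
Your proposal is correct, and for the decisive step it takes a different and cleaner route than the paper. The decomposition matches the paper's: the eigenvalues $-1$ and $-(r+1)$ from Lemma~\ref{lem:lemasLuHH}, a family built from differences of two paws, and the quintic $f$ of the same partition $\pi$.

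Your identity $\mathbf{D}(\mathrm{paw})-2\mathbf{J}=-2\mathbf{I}-\mathbf{A}(\mathrm{paw})$ also explains the paper's cubic. With $\phi(\mu)=\mu^4-4\mu^2-2\mu+1$ one has $\phi(-2-x)=(x+1)(x^3+7x^2+13x+5)$; the paper keeps only vectors symmetric in the triangle pair, which removes the factor $x+1$.

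For $f$, the paper argues in three steps:
\begin{itemize}
\item it applies Descartes to $f$ itself to get one positive root;
\item it checks $f(-\frac12)<0$ and $f(0)<0$;
\item it shows $f''<0$ on $[-\frac12,0]$ via a symbolic Sturm sequence.
\end{itemize}
That concavity argument also needs $f'(-\frac12)<0$, which the paper states without a separate check. Your shift $x=y-\frac12$ handles everything at once, and it does work, though you left it unchecked. Writing $A=8(q-1)+r(2p-1)\ge r$,
\begin{align*}
f\bigl(y-\tfrac12\bigr)={}&y^5-\bigl(A+\tfrac52\bigr)y^4-\bigl((8r+20)q+11rp-5r-\tfrac{13}{2}\bigr)y^3\\
&-\bigl((16r+10)q+\tfrac{27}{2}rp-4r+\tfrac54\bigr)y^2-\bigl((2r+1)q+\tfrac{rp}{4}+\tfrac{13r}{4}+\tfrac{27}{16}\bigr)y-\tfrac{2r+1}{32}.
\end{align*}
Every bracket is positive for $p,q,r\ge1$; for example, the $y^3$ bracket is at least $14r+\frac{27}{2}$. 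The sign pattern $(+,-,-,-,-,-)$ therefore gives exactly one root of $f$ in $(-\frac12,\infty)$, namely $\lambda_1$. The constant term $f(-\frac12)=-\frac{2r+1}{32}\neq0$ shows that $-\frac12$ is not a root. No Sturm sequence or small-case split is needed, and the $y$-coefficient is exactly $f'(-\frac12)$.

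There are two slips to repair.

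First, the paw polynomial is not negative at $-1$; it vanishes there, since $e_y-e_z$ is an eigenvector. Factor it as $\phi(\mu)=(\mu+1)(\mu^3-\mu^2-3\mu+1)$. The cubic takes the values $-\frac18,\,2,\,1,\,-2,\,10$ at $-\frac32,\,-1,\,0,\,1,\,3$. Its three roots therefore lie in $(-\frac32,-1)$, $(0,1)$ and $(1,3)$, and $\mu_{\min}>-\frac32$ follows.

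Second, your families are not independent. The paw-difference vectors with $w=e_y-e_z$ already lie in the span of the $q$ vectors $e_{y_i}-e_{z_i}$, so your multiplicities add up to $pr+5q$, not $1+pr+4q$. The invariant subspaces still sum to $\mathbb{R}^V$, so every eigenvalue is still a listed local one or a root of $f$, and your conclusion stands. Taking $w$ symmetric in $y,z$, as the paper does, makes the count exact.
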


Thus, our goal in this section is to prove Theorem~\ref{the:main}. To this
end, we first establish several auxiliary results, which will be stated and
proved as lemmas. Throughout the remainder of this subsection, we assume that
$G=G(r,p,q)$ is a fixed relaxed block star as defined in
(\ref{eq:Grpq}), with parameters $p,q,r \ge 1$ and characteristic polynomial
$P_G(x)$.
\smallskip

\begin{lemma}\label{fator1e2} For the   characteristic polynomial $P_G(x)$ of the graph $G=G(r,p,q)$, it holds that:
\begin{enumerate}
     \item[(i)] assuming that  $r\geq 2$, 
     then  $(x+1)^{(p(r-1)+q)}$ divides $P_G(x)$;
     \item[(ii)] if  $p\geq 2$ then $(x+(r+1))^{(p-1)}$ divides $P_G(x)$.
\end{enumerate}
\end{lemma}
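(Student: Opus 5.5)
The plan is to apply Lemma~\ref{lem:lemasLuHH} directly to suitable vertex subsets of $G=G(r,p,q)$. Throughout, multiplicities of eigenvalues of the symmetric matrix $\mathbf{D}(G)$ will be counted by exhibiting linearly independent eigenvectors. To fix notation, let $u$ be the universal vertex of $G$. Write the $i$-th copy of $K_r$ as $C_i$, $1\le i\le p$. Write the $j$-th copy of $K_1\oplus(K_1\cup K_2)$ as having apex $c_j$, pendant $a_j$, and edge $\{b_j,b'_j\}$, $1\le j\le q$.

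For (i), assume $r\ge 2$. Each $C_i$ is a clique whose vertices all satisfy $N_G(v)\setminus C_i=\{u\}$. By Lemma~\ref{lem:lemasLuHH}(i), $-1$ is an eigenvalue with eigenspace containing the $r-1$ vectors $e_v-e_w$, $v,w\in C_i$. This can be checked directly: rows $v$ and $w$ of $\mathbf{D}(G)$ agree outside $\{v,w\}$, so $\mathbf{D}(e_v-e_w)=-(e_v-e_w)$. Likewise each pair $\{b_j,b'_j\}$ is a clique with common outside neighbourhood $\{u,c_j\}$, giving the eigenvector $e_{b_j}-e_{b'_j}$ for $-1$. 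All these vectors are supported on the pairwise disjoint sets $C_1,\dots,C_p,\{b_1,b'_1\},\dots,\{b_q,b'_q\}$. Within each support they are independent (a basis of the sum-zero vectors there), so the whole family is linearly independent. This yields multiplicity at least $p(r-1)+q$, hence $(x+1)^{p(r-1)+q}\mid P_G(x)$.

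For (ii), assume $p\ge 2$. The induced subgraph $pK_r=C_1\cup\cdots\cup C_p$ has all its vertices with outside neighbourhood $\{u\}$, so Lemma~\ref{lem:lemasLuHH}(ii) gives $-(r+1)$ with multiplicity at least $p-1$. To make this explicit, consider $\mathbf{1}_{C_i}-\mathbf{1}_{C_1}$ for $2\le i\le p$. At a vertex of $C_i$ its image is $(r-1)-2r=-(r+1)$, using distance $1$ inside a copy and distance $2$ between copies. At a vertex of $C_1$ the value is symmetrically $r+1$. At any other vertex the image vanishes, since its distances to all vertices of $C_i$ and of $C_1$ coincide. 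These $p-1$ vectors are clearly independent, so $(x+(r+1))^{p-1}\mid P_G(x)$.

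The only point needing care is the passage from eigenvalue multiplicities to divisibility of the characteristic polynomial. This is immediate because $\mathbf{D}(G)$ is symmetric, so geometric and algebraic multiplicities coincide. There is no genuine obstacle; the bookkeeping of disjoint supports in (i) is the main thing to state clearly.
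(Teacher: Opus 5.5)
Your proposal is correct and follows the paper's proof: the paper also applies Lemma~\ref{lem:lemasLuHH}(i) to each copy of $K_r$ and to each $K_2$, and Lemma~\ref{lem:lemasLuHH}(ii) to $pK_r$. Your explicit eigenvectors with pairwise disjoint supports usefully make rigorous a step the paper leaves implicit, namely that the multiplicities contributed by the different cliques add up to $p(r-1)+q$.
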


\begin{proof}  Indeed,  that $-1$ is an eigenvalue of $G$ with multiplicity at least $p(r-1) + q$ is evidenced by the existence of at least $p(r-1) + q \geq 2$ vertex subsets  satisfying condition (i) of Lemma \ref{lem:lemasLuHH},  $r-1$ of which corresponding to  one of the $p$ copies of $K_r$ and  one associated with each instance of  $K_2$ in the  $q $ copies of $K_1 \oplus (K_1 \cup K_2)$ in the structure of $G$.
Analogously, by condition (ii) of Lemma \ref{lem:lemasLuHH},  it happens that $-(r+1)$ is an eigenvalue of $G$ with multiplicity at least $p-1$, as ensured by considering the $p \geq 2$ copies of $K_r$.
    \end{proof}
\smallskip

 In our computational tests, we found a factor of $P_G(x)$
 associated with the presence of at least two copies of the graph \( K_1 \oplus (K_1 \cup K_2) \) as connected induced subgraphs of $G$. The next lemma presents the arguments for this fact.

\begin{lemma}\label{fator3} With the notations of Theorem \ref{the:main}, the polynomial $(x^3+7x^2+13x+5)^{q-1}$ divides $P_G(x)$ when $q \geq 2$.
\end{lemma}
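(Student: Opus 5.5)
The plan is to exhibit, for each root $\theta$ of $f(x)=x^3+7x^2+13x+5$, a family of $q-1$ linearly independent eigenvectors of $\mathbf{D}(G)$ with eigenvalue $\theta$. These are built from ``antisymmetric'' vectors supported on two copies of $H=K_1\oplus(K_1\cup K_2)$. This is in the spirit of Lemma \ref{lem:lemasLuHH}(ii), with the clique $K_r$ replaced by $H$.

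\textbf{Setup.} Label the universal vertex of $G$ by $u$. In the $i$-th copy $H_i$ of $H$ ($1\le i\le q$), let $c_i$ be the centre, $a_i$ the vertex adjacent only to $c_i$ inside $H_i$, and $b_i,b_i'$ the vertices of the $K_2$. The relevant distances are as follows.
\begin{itemize}
\item Inside $H_i$: $d(c_i,a_i)=d(c_i,b_i)=d(c_i,b_i')=d(b_i,b_i')=1$ and $d(a_i,b_i)=d(a_i,b_i')=2$.
\item Any two vertices lying in different blocks of $G-u$ are at distance $2$, via $u$.
\item Every vertex of $G-u$ is at distance $1$ from $u$.
\end{itemize}

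\textbf{Construction of the eigenvectors.} Take a vector $\mathbf{v}$ that equals a vector $g=(g_c,g_a,g_b,g_b)$ on $H_1$ (the same value on $b_1$ and $b_1'$), equals $-g$ on $H_j$ for some $j\ge 2$, and is zero elsewhere. We then compute $\mathbf{D}\mathbf{v}$ in three cases.
\begin{itemize}
\item At a vertex $w\notin V(H_1)\cup V(H_j)$, the distances from $w$ to all vertices of $H_1$ and of $H_j$ coincide (all equal $1$ if $w=u$, all equal $2$ otherwise). The two contributions therefore cancel, and $(\mathbf{D}\mathbf{v})(w)=0$.
\item At a vertex $v\in V(H_1)$, we get $(\mathbf{D}_H g)(v)-2\sum g$, that is, $\bigl((\mathbf{D}_H-2\mathbf{J})g\bigr)(v)$.
\item Symmetrically, at a vertex of $H_j$ we get the negative of the same expression.
\end{itemize}
Hence $\mathbf{v}$ is an eigenvector with eigenvalue $\theta$ whenever $g$ is an eigenvector of $\mathbf{D}_H-2\mathbf{J}$ with eigenvalue $\theta$ that is constant on $\{b,b'\}$.

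\textbf{Reduction to a $3\times 3$ matrix.} Vectors constant on $\{b,b'\}$ form an invariant subspace of $\mathbf{D}_H-2\mathbf{J}$. The restriction to this subspace is given by the quotient matrix, in the order $(c,a,b)$:
\[
M=\begin{pmatrix}-2&-1&-2\\-1&-2&0\\-1&0&-3\end{pmatrix}.
\]
Expanding $\det(x\mathbf{I}-M)$ along the first row gives
\[
(x+2)^2(x+3)-(x+3)-2(x+2)=x^3+7x^2+13x+5 .
\]
So each root $\theta$ of $f$ is an eigenvalue of $M$, and it lifts to an eigenvector $g$ of $\mathbf{D}_H-2\mathbf{J}$ that is constant on $\{b,b'\}$.

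\textbf{Multiplicity.} For $j=2,\dots,q$, the vectors $\mathbf{v}^{(j)}$ are linearly independent, since $\mathbf{v}^{(j)}$ is the only one of them that is nonzero on $H_j$. Thus each root $\theta$ has multiplicity at least $q-1$ in $\mathbf{D}(G)$.

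It remains to see that $f$ has three distinct roots, so that the root-by-root multiplicities combine into divisibility by $f^{q-1}$. The values $f(0)=5>0$, $f(-1)=-2<0$, $f(-3)=2>0$ and $f(-5)=-10<0$ exhibit three sign changes, so $f$ has three simple real roots. Since $\mathbf{D}(G)$ is symmetric, algebraic multiplicity equals geometric multiplicity, and we conclude that $f(x)^{q-1}$ divides $P_G(x)$.

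The main obstacle is the cancellation argument at vertices outside $H_1\cup H_j$, in particular at $u$ and at the vertices of the $K_r$'s. This relies on the constant-distance observation, and it is also the only place where the hypothesis that $G$ has a universal vertex (so that $\mathrm{diam}(G)=2$) is used.
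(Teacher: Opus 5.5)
Your proposal is correct and follows essentially the same route as the paper. The paper also pairs the first copy of $K_1\oplus(K_1\cup K_2)$ with each other copy, using the vector $(x_\lambda,y_\lambda,z_\lambda,z_\lambda)$ on one copy and its negative on the other, where $(x_\lambda,y_\lambda,z_\lambda)^t$ is an eigenvector of the same matrix $\mathbf{M}$; this gives $q-1$ eigenvectors per root. Your explicit check that $x^3+7x^2+13x+5$ has three distinct real roots (via the sign changes at $0,-1,-3,-5$) fills in the step from root-by-root multiplicities to divisibility by the $(q-1)$-th power, which the paper leaves implicit.
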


\begin{proof}

The proof becomes simpler if  we assign labels to the vertices of the graph $G=G(r,p,q)$ as follows:
 let \( w_0 \) denote the universal vertex of $G$ 
 and,  for each \( i = 1, \dots, p \), let  the vertices \( w_{(i-1)r+1}, \dots, w_{ir} \) form the \( i \)-th copy of \( K_r \), with the final one given by \( w_{(p-1)r+1}, \dots, w_{pr} \).

We now label the vertices of each copy of \( K_1 \oplus (K_1 \cup K_2) \) as follows: let \( v_1 \) be the universal vertex, \( v_2 \) the isolated vertex, and \( v_3, v_4 \) the vertices of the \( K_2 \) component. We repeat this labeling for each one of the \( q \) copies. Specifically, for each \( i = 1, \dots,  q \), the vertices \( v_{4(i-1)+1}, v_{4(i-1)+2}, v_{4(i-1)+3}, v_{4(i-1)+4} \) correspond to the \( i \)-th copy of \( K_1 \oplus (K_1 \cup K_2) \), where \( v_{4i+1} \) is the universal vertex, \( v_{4i+2} \) is the isolated vertex, and \( v_{4i+3}, v_{4i+4} \) are the vertices of the \( K_2 \) component.
Using this labeling, the matrix $\mathbf{D}(G)$ can be written as

\[\mathbf{D}(G)=\left[
   \begin{array}{c|c}
     \mathbf{D}_1 & \mathbf{D}_2 \\
     \hline
     \mathbf{D}_2^t & \mathbf{D}_3 \\
   \end{array}
 \right]\,,\]
where $\mathbf{D}_1$ has order $(pr+1)\times (pr+1)$, $\mathbf{D}_2$ has order $(pr+1)\times 4q$ and finally $\mathbf{D}_3$ has order $4q\times 4q$.
Let's describe the matrices $\mathbf{D}_2$ and $\mathbf{D}_3$  in more detail. In fact,
\[
\mathbf{D}_2=\left[
\begin{array}{cccc}
1&1&\dots&1\\
2&2&\dots&2\\
\vdots&\vdots&\ddots&\vdots\\
2&2&\dots&2\\
\end{array}
\right],
\]
since its entries are equal to the distances $d(w_j, v_i).$
The matrix \( \mathbf{D}_3 \) encodes the pairwise distances between the vertices \(v_1, \dots, v_{4q}\).
This matrix can naturally be viewed as a block matrix, where each block corresponds to the distances between specific subsets of vertices.
In fact,
\[
\mathbf{D}_3=\left[
\begin{array}{cccc|cccc|c|cccc}
0&1&1&1&2&2&2&2&\dots&2&2&2&2\\
1&0&2&2&2&2&2&2&\dots&2&2&2&2\\
1&2&0&1&2&2&2&2&\dots&2&2&2&2\\
1&2&1&0&2&2&2&2&\dots&2&2&2&2\\
\hline \
2&2&2&2&0&1&1&1&\dots&2&2&2&2\\
2&2&2&2&1&0&2&2&\dots&2&2&2&2\\
2&2&2&2&1&2&0&1&\dots&2&2&2&2\\
2&2&2&2&1&2&1&0&\dots&2&2&2&2\\
\hline\
&&\dots&&&\dots&&&\dots&&&\dots\\
\hline\
2&2&2&2&2&2&2&2&\dots&0&1&1&1\\
2&2&2&2&2&2&2&2&\dots&1&0&2&2\\
2&2&2&2&2&2&2&2&\dots&1&2&0&1\\
2&2&2&2&2&2&2&2&\dots&1&2&1&0\\
\end{array}
\right].
\]

We claim that the eigenvalues of  matrix
 $$\mathbf{M}=\begin{bmatrix}
-2&-1&-2\\
-1&-2&\hfill 0\\
-1&\hfill 0&-3
 \end{bmatrix}$$
 are eigenvalues of $\mathbf{D}(G).$

In order to prove our claim,  let \( \lambda \) be an eigenvalue of the matrix \( \mathbf{M} \), and let \( [x_\lambda, y_\lambda, z_\lambda]^{t} \) denote an eigenvector associated with \( \lambda \).
Then the $q-1$ vectors
\begin{align*}
    \mathbf{u}_1&=[\underbrace{0, \dots, 0}_{pr+1}, x_\lambda, y_\lambda, z_\lambda, z_\lambda, -x_\lambda, -y_\lambda, -z_\lambda, -z_\lambda,\underbrace{0, \dots, 0}_{4(q-2)}]^{t}\\
   \mathbf{u}_2&=[\underbrace{0, \dots, 0}_{pr+1}, x_\lambda, y_\lambda, z_\lambda, z_\lambda, 0, 0, 0, 0, -x_\lambda, -y_\lambda, -z_\lambda, -z_\lambda,\underbrace{0, \dots, 0}_{4(q-3)}]^{t}\\
   \vdots \\
    \mathbf{u}_{q-1}&=[\underbrace{0, \dots, 0}_{pr+1}, x_\lambda, y_\lambda, z_\lambda, z_\lambda, \underbrace{0, \dots, 0}_{4(q-2)}, -x_\lambda, -y_\lambda, -z_\lambda, -z_\lambda]^{t}\\
\end{align*}
are eigenvectors of $\mathbf{D}(G)$ associated with $\lambda.$
We  prove, for instance, that \( \mathbf{u}_1 \) is an eigenvector of \( \mathbf{D}(G) \) associated with the eigenvalue \( \lambda \):
$$\mathbf{D}(G)\mathbf{u}_1=\left[
\begin{array}{c|c}
\mathbf{D}_1&\mathbf{D}_2\\
\hline
\mathbf{D}_2^t&\mathbf{D}_3
\end{array}
\right]\mathbf{u}_1=\left[\begin{array}{c}0\\ \vdots\\0\\-2x_\lambda-y_\lambda-2z_\lambda\\
-x_\lambda-2y_\lambda\\-x_\lambda-3z_\lambda\\-x_\lambda-3z_\lambda\\2x_\lambda+y_\lambda+2z_\lambda\\
x_\lambda+2y_\lambda\\x_\lambda+3z_\lambda\\x_\lambda+3z_\lambda\\0\\ \vdots\\0
\end{array}\right]=\left[\begin{array}{c}0\\ \vdots\\0\\\hfill \lambda\cdot x_\lambda\\
\hfill\lambda\cdot y_\lambda\\\hfill\lambda\cdot z_\lambda\\\hfill\lambda\cdot z_\lambda\\-\lambda\cdot x_\lambda\\
 -\lambda\cdot y_\lambda\\-\lambda\cdot z_\lambda\\-\lambda\cdot z_\lambda\\0\\ \vdots\\0
\end{array}\right]=
\lambda\cdot \mathbf{u}_1,
$$
where the  equalities  follows from the fact that $[x_\lambda, y_\lambda, z_\lambda]^{t}$ is an eigenvector of $\mathbf{M}$ associated with the eigenvalue $\lambda$.
Given that the characteristic polynomial of $\mathbf{M}$ is $ P_M(x) = x^3 + 7x^2 + 13x + 5 $, and that each one of its roots appears as eigenvalue of  $\mathbf{D}(G)$  with multiplicity at least \( q - 1 \), the lemma follows immediately.
\end{proof}
\smallskip

In order to prove that $\lambda_2(G) <-1/2$ holds for our $G=G(r,p,q)$, we consider now  the partition $\pi = \left\{V_1, V_2, V_3, V_4, V_5\right\}$ of the vertex set  $V$ of $G$,  where
  $V_1$ consists of its universal vertex
 and
 $V_2$ comprises the collection of vertices corresponding to the $p$ copies of the complete graph $K_r$; and also,
in order to describe the vertex set of the subgraphs $q(K_1 \oplus (K_1 \cup K_2))$, we proceed by setting $V_3$ and $V_4$ as the sets containing, respectively,  the universal vertices and the independent vertices associated with each instance of $K_1$ in the $q$ copies of $K_1 \oplus (K_1 \cup K_2)$,
and $V_5$ as the set formed by the pairs of vertices corresponding to the subgraphs isomorphic to $K_2$ within each of the $q$ copies of $K_1 \oplus (K_1 \cup K_2)$.
We may observe that $\#V_1=1,\; \#V_2= pr,\;  \#V_3=q,\; \#V_4=q$ and $\#V_5=2q.$
\smallskip

Clearly, $\pi$ is a distance  equitable partition of $V$. The distance divisor matrix of $G$ associated with $\pi$ is

\medskip

\[ \mathbf{F}_{\pi}(G)=\left[
   \begin{array}{ccccc}
     0 & p r & q & q & 2 q\\
1 & r-1+2\left( p-1\right)  r & 2 q & 2 q & 4 q\\
1 & 2 p r & 2 \left( q-1\right)  & 1+2 \left( q-1\right)  & 2+4(q-1)\\
1 & 2 p r & 1+2 \left( q-1\right)  & 2 \left( q-1\right)  & 4 q\\
1 & 2 p r & 1+2 \left( q-1\right)  & 2 q & 1+2 \left( 2 q-2\right) \\
   \end{array}
 \right]\,.\]

\smallskip

Its characteristic polynomial is
 \[f(r,p,q, x) ={{x}^{5}}-\left( 8 q+2 r p-r-8\right) \, {{x}^{4}}
 -\left( \left( 8 r+36\right)  q+15 r p-7 r-20\right) \, {{x}^{3}}+\]
 \[- \left( \left( 28 r+52\right) q+33 r p-13 r-18\right) \, {{x}^{2}}-
 \left( \left( 24 r+30\right)  q+23 r p-5 r-5\right)x+\]
 \[-\left(\left( 6 r+6\right)  q+5 r p\right).
 \]
\smallskip

We  write $f(x)$ instead of $f(r,p,q,x)$ for brief, when there is no chance of misunderstanding.  From Lemma \ref{lem:equit}, we  then have the following result.

 \begin{lemma}\label{fator4} The above polynomial $f(x)$  divides the characteristic polynomial $P_G(x)$ of $G$.
 \end{lemma}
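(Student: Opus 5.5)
The plan is to apply Lemma~\ref{lem:equit} directly to the partition $\pi=\{V_1,\dots,V_5\}$ introduced above. That lemma says that $\det(x\mathbf{I}-\mathbf{F}_\pi)$ divides $P_G(x)$ whenever $\pi$ is a distance equitable partition. So the proof reduces to two verifications: that $\pi$ is indeed distance equitable with divisor matrix exactly the displayed $\mathbf{F}_\pi(G)$, and that $\det(x\mathbf{I}-\mathbf{F}_\pi(G))=f(r,p,q,x)$.

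For equitability, I will fix a vertex $v$ in each class and compute $d(v,V_j)$ for $j=1,\dots,5$. This uses only the fact that $G$ has diameter $2$ with the universal vertex $w_0$: two distinct vertices are at distance $1$ if adjacent and $2$ otherwise. The row for $w_0$ is the list of class sizes $1,pr,q,q,2q$ with $0$ in the first entry. The other rows follow from the adjacencies.

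\begin{itemize}
\item A vertex of $V_2$ lying in a copy of $K_r$ sees its $r-1$ clique-mates at distance $1$ and the other $(p-1)r$ vertices of $V_2$ at distance $2$. It is at distance $2$ from every vertex of $V_3\cup V_4\cup V_5$.
\item The universal vertex of a copy of $K_1\oplus(K_1\cup K_2)$ (a vertex of $V_3$) is adjacent to its own isolated vertex and its own $K_2$. This gives $1+2(q-1)$ towards $V_4$, $2+4(q-1)$ towards $V_5$, and $2(q-1)$ towards $V_3$.
\item A vertex of $V_4$ is adjacent only to $w_0$ and to its own universal vertex. This gives $1+2(q-1)$, $2(q-1)$ and $4q$ towards $V_3$, $V_4$ and $V_5$, respectively.
\item A vertex of $V_5$ is adjacent to its partner, to its own universal vertex and to $w_0$. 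This gives $1+2(q-1)$ towards $V_3$, $2q$ towards $V_4$, and $1+2(2q-2)$ towards $V_5$.
\end{itemize}

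In each case the value depends only on the class, not on the chosen vertex, by the symmetry among the $p$ copies of $K_r$ and among the $q$ copies of $K_1\oplus(K_1\cup K_2)$. This confirms the matrix $\mathbf{F}_\pi(G)$.

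The remaining step, which is the only laborious one, is the expansion of the $5\times 5$ determinant $\det(x\mathbf{I}-\mathbf{F}_\pi(G))$. I would first simplify by elementary operations. Rows $3$, $4$ and $5$ of $\mathbf{F}_\pi$ agree in the first two columns and differ only in a few entries by constants. Subtracting row $4$ from rows $3$ and $5$ in $x\mathbf{I}-\mathbf{F}_\pi$ produces rows whose nonconstant entries come only from the diagonal. Expanding along those rows then leaves a small symbolic computation.

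Finally I would collect coefficients in powers of $x$ and compare with $f(r,p,q,x)$. A useful consistency check is the trace: the coefficient of $x^4$ must equal $-\operatorname{tr}\mathbf{F}_\pi=-(8q+2rp-r-8)$, which matches. One can also check $f(0)=-\det\mathbf{F}_\pi$ numerically for small $(r,p,q)$. With the determinant identified as $f$, Lemma~\ref{lem:equit} gives $f(x)\mid P_G(x)$.
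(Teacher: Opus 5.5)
Your proposal is correct and follows the paper's own route. The paper simply invokes Lemma~\ref{lem:equit} for the partition $\pi$, taking as clear that $\pi$ is distance equitable with the displayed $\mathbf{F}_\pi(G)$ and stating that $f$ is its characteristic polynomial; your row-by-row check of $\mathbf{F}_\pi(G)$ is right, and the expansion of $\det(x\mathbf{I}-\mathbf{F}_\pi)$ does reproduce $A,\dots,E$ exactly. One small correction to your description of the simplification: subtracting row~4 of $x\mathbf{I}-\mathbf{F}_\pi$ from rows~3 and~5 gives $(0,0,x+1,-(x+1),2)$ and $(0,0,0,-(x+2),x+3)$, so nonconstant off-diagonal entries remain in column~4, although the zeros in the first two columns still keep the expansion short.
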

\smallskip

 \begin{proposition} The polynomial $P_G(x)$ can be factored as
 \[P_G(x)=(x+1)^{(p(r-1)+q)}\;(x+(r+1))^{(p-1)}\;({{x}^{3}}+
7{{x}^{2}}+13x+5)^{(q-1)}\;f(r,p,q,x)\,.\]
Furthermore, the roots of $(x+1)$, $(x+(r+1))$ and $(x^3+7x^2+13x+5)$ are less than $-1/2$.
 \end{proposition}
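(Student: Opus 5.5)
The plan is to prove the factorization by combining the divisibility results already obtained and then comparing degrees. By Lemma \ref{fator1e2}, $(x+1)^{p(r-1)+q}$ and $(x+(r+1))^{p-1}$ divide $P_G(x)$. By Lemma \ref{fator3}, $(x^3+7x^2+13x+5)^{q-1}$ divides $P_G(x)$. By Lemma \ref{fator4}, $f(x)$ divides $P_G(x)$.

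These four factors have pairwise disjoint root sets, with one point to check. The roots of the cubic are eigenvalues of $\mathbf{M}$, and the cubic takes the values $5,-2,-1,11$ at $x=0,-1,-2,-3$. So it has a root in $(-1,0)$, one in $(-2,-1)$ and one in $(-3,-2)$. In particular $-1$ and $-(r+1)$ are never roots of the cubic when $r\ge 2$; when $r=1$ we check directly that $-2$ is not a root, since the value there is $-1\ne 0$.

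The real obstacle is that the roots of $f$ could coincide with those of the other factors. Merely knowing that each factor divides $P_G$ would not then give the product. To avoid this, I would use the eigenvector construction underlying the lemmas rather than only divisibility. The eigenspaces produced for $-1$, for $-(r+1)$ and for the cubic roots are spanned by vectors summing to zero on each cell of $\pi$, and they are mutually orthogonal across the different constructions. The characteristic polynomial $f$ of $\mathbf{F}_\pi$ accounts for the complementary subspace of vectors constant on the cells of $\pi$, which has dimension $5$.

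This yields an orthogonal decomposition of $\mathbb{R}^n$. The only thing to confirm is that the dimensions add up:
\[(p(r-1)+q)+(p-1)+3(q-1)+5 = pr+4q+1 = n.\]
That count gives the factorization with exact exponents. If $r=1$, the first factor has exponent $q$, coming from the $K_2$'s only; this is consistent with the formula and with Lemma \ref{fator1e2}(i) applied to the $K_2$ copies. The vectors must be checked to be linearly independent, and this follows from their disjoint or orthogonal supports.

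Finally, the second claim is easy. We have $-1<-1/2$ and $-(r+1)\le -2<-1/2$. For the cubic, the root bracketing gives its largest root in $(-1,0)$; we need it below $-1/2$. Evaluating at $-1/2$ gives $-\tfrac18+\tfrac74-\tfrac{13}{2}+5=\tfrac18>0$. Since the cubic is $-2$ at $-1$ and $5$ at $0$, and is increasing on $(-1,0)$ (its derivative $3x^2+14x+13$ is positive there), its root in $(-1,0)$ lies in $(-1,-1/2)$. Hence all three of its roots are less than $-1/2$.
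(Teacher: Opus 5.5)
Your argument is correct in substance. In one respect it is more careful than the paper's proof, which cites Lemmas \ref{fator1e2}, \ref{fator3} and \ref{fator4}, calls the factorization immediate, and gives the roots of the cubic only numerically ($\approx -0.519,\,-2.311,\,-4.170$).

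As you note, knowing that each factor divides $P_G(x)$ separately gives neither divisibility by the product nor the exact exponents. A root of $f$ could coincide with $-1$, $-(r+1)$ or a root of the cubic, and the paper never rules this out. Your invariant-subspace argument closes this gap. Let $W$ be the span of the cell indicators of $\pi$. Then:
\begin{itemize}
\item $W$ and $W^{\perp}$ are $\mathbf{D}$-invariant;
\item $f$ is the characteristic polynomial of $\mathbf{D}$ on $W$;
\item the lemma eigenvectors lie in $W^{\perp}$ and number $n-5=\dim W^{\perp}$.
\end{itemize}
This yields the exact factorization, and hence what Theorem \ref{the:main} actually uses: every eigenvalue not coming from the three explicit factors is a root of $f$. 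Your sign-and-monotonicity argument for the cubic likewise turns the paper's numerics into a proof.

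There is one computational slip, which does not damage the argument but should be fixed. The cubic equals $2$, not $11$, at $x=-3$. It is negative at both $-1$ and $-2$, so there is no sign change on $(-2,-1)$. Its roots actually lie in $(-1,0)$, $(-3,-2)$ and $(-5,-4)$ (the values at $-4$ and $-5$ are $1$ and $-10$).

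Consequently, your reason why $-(r+1)$ is never a root fails as written for $r=3,4$. The rational root test repairs this at once: the only rational candidates are $\pm1,\pm5$, and none is a root.

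In fact your decomposition does not need disjoint root sets, because the three families are mutually orthogonal by direct computation:
\begin{itemize}
\item a difference $e_a-e_b$ inside one $K_r$ is orthogonal to every copy-difference vector for $-(r+1)$;
\item the difference of the two $K_2$ vertices of a copy has inner product $z_\lambda-z_\lambda=0$ with the vectors of Lemma \ref{fator3};
\item all remaining pairs have disjoint supports.
\end{itemize}

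Within a family, independence does not come from ``disjoint or orthogonal supports''. For example, $\mathbf{u}_1,\dots,\mathbf{u}_{q-1}$ all share the first copy. The correct reason is that, for a fixed eigenvalue, each vector has a block on which the others vanish. For distinct roots of the cubic, orthogonality follows from the symmetry of $\mathbf{D}$.

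Finally, the claim that the largest root lies in $(-1,0)$ should rest on the cubic having no nonnegative roots, since all its coefficients are positive. Together with your monotonicity on $(-1,0)$ and the value $1/8$ at $-1/2$, this makes the cubic positive on $[-1/2,\infty)$, so all its roots are below $-1/2$.
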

\begin{proof}
    The proof follows straightforwardly from Lemmas \ref{fator1e2}(i) and (ii), \ref{fator3} and \ref{fator4}, considering that factors $(x+1)$, \ $(x+(r+1))$\  and \ $(x^3+7x^2+5)$ \ occur provided the para\-meters $r,p,q$ satisfy the conditions indicated in the related results.  Moreover,  the roots of $(x+1)$ and $(x+(r+1))$ are clearly less that $-1/2$.  Similarly, the roots of the cubic polynomial  $({{x}^{3}}+
7{{x}^{2}}+13x+5)$ are   $x \approx -0.51881$, $x \approx -2.3111$\, and  $x \approx -4.1701$, being all of which also less than $-1/2$.
\end{proof}
\medskip

To complete the proof of Theorem \ref{the:main}, it remains to deal with the polynomial $f(x)=f(r,p,q,x)$ of matrix $\mathbf{F}_{\pi}(G) $. In order to make it simpler, we adopt the following notations:
\[A={A}\left( r,p,q\right) :=8 q+2 r p-r-8;\]
\[B= {B}\left( r,p,q\right) :=\left( 8 r+36\right)  q+15 r p+\left( -7\right)  r-20;\]
\[C= {C}\left( r,p,q\right) :=\left( 28 r+52\right)  q+33 r p+\left( -13\right)  r-18;\]
\[D= {D}\left( r,p,q\right) :=\left( 24 r+30\right)  q+23 r p+\left( -5\right)  r-5;\]
\[E= {E}\left( r,p,q\right) :=\left( 6 r+6\right)  q+5 r p.\]
Thus we can refer to  the characteristic polynomial of $\mathbf{F}_{\pi}(G)$ as
\[{f(r,p,q,x)}={{x}^{5}}-{A}\left( r,p,q\right) \, {{x}^{4}}-{B}\left( r,p,q\right) \, {{x}^{3}}-{C}\left( r,p,q\right) \, {{x}^{2}}-{D}\left( r,p,q\right)  x-{E}\left( r,p,q\right), \]
or briefly, as
\[f(x)=f\left( A,B,C,D,E,x\right)
={{x}^{5}}-A\, {{x}^{4}}-B\, {{x}^{3}}-C\, {{x}^{2}}-D x-E.\]
\medskip

In order to prove that, with exception of the largest one,  all roots  of
  our real   degree 5 polynomial \( f(x) \)   are less than -1/2, we   show that $f(x)$  has no roots in the interval $[-1/2, 0]$. To get this, we  prove the following sequence of facts:
\begin{itemize}
\item[(I)] All the roots of $f(x)$, except the largest one, are negatives;
    \item[(II)] \( f(-1/2) < 0 \) and \( f(0) < 0 \);
    \item[(III)] The second derivative \( f''(x) \) of $f(x)$ does not vanish on \( [-1/2, 0] \); specifically,  \( f''(x) <0 \)   on this interval.
\end{itemize}
 From fact (III),  it follows that  the first derivative \( f'(x) \) is a monotonically decreasing function on the interval \( [-1/2, 0] \). Also, in particular, it holds that  $0> f'(-1/2) >f'(0)$.
    Therefore, \( f(x) \) is  strictly concave throughout \( [-1/2, 0] \).
 Since $f'(x)$ does not vanish in this interval, the maximum of \( f(x) \) on the interval is attained at one of the endpoints.
 Given that both \( f(-1/2) < 0 \) and \( f(0) < 0 \) from (II),  it follows that \( f(x) < 0 \) for all \( x \in [-1/2, 0] \).
So, we can conclude that  \( f(x) \) has no real roots in the interval \( [-1/2, 0] \).
\smallskip

For proving Fact (I), we  apply
 the next result, known as Descartes' Rule of Signs, which gives the maximum number  of positive and of negative roots of an univariate polynomial. For more details, see, for example,  \cite{Anderson01051998}, by Anderson \textit{et al.} and \cite{Wang2004}, by Wang.

\begin{lemma}[Descartes' Rule of Signs, 1637]\label{lem:Desc}
 Let \[p(x) = a_0 x^{b_0} + a_1 x^{b_1} +\ldots +a_nx^{b_n}\] denote
a polynomial with  real coefficients $a_i$, where the $b_i$ are integers satisfying
$0 <  b_0  <  b_2 < \ldots < b_n$.
Let denote by $\#_+(p(x))$ the number of positive real roots of $p(x)$ counted with their multiplicities and by $\nu(p(x))$ the number of sign changes between consecutive non-zero of its coefficients $a_0, \ldots , a_n$.
Then \[\#_+(p(x))-\nu(p(x)) \mbox{ is a non-negative even integer.}\]
In particular, if $\nu(p(x)) \leq 1$ then one has $\#_+(p(x)) =\nu(p(x))$.
Also, for  the number $\#_-(p(x))$
of negative zeros of $p(x)$ (counted with multiplicities)
it holds that \[\#_-(p(x)) -\nu(p(-x)) \mbox{ is a non-negative even integer.}\]
\end{lemma}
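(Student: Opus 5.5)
We prove the classical form of Descartes' rule: $\nu(p(x))-\#_+(p(x))$ is a non-negative even integer. The displayed formula in the statement has the difference in the opposite order, and we read it in this correct sense. The proof has three parts: a parity statement from comparing signs at $0^+$ and $+\infty$, an upper bound by induction on the number of terms via Rolle's theorem, and then the negative-root and special cases.

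First we normalize. Dividing by $x^{b_0}$ does not change the positive roots, their multiplicities, or the coefficient sign sequence. So we may work with $q(x)=p(x)/x^{b_0}=a_0+a_1x^{c_1}+\cdots+a_nx^{c_n}$, where $0<c_1<\cdots<c_n$ and all $a_i\neq 0$.

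\emph{Parity.} As $x\to 0^+$, $q(x)$ has the sign of $a_0$. As $x\to+\infty$, it has the sign of $a_n$. Counted with multiplicity, $q$ changes sign on $(0,\infty)$ exactly at roots of odd multiplicity. Hence $\#_+(q)$ is even if and only if $\operatorname{sgn}a_0=\operatorname{sgn}a_n$. The latter holds if and only if $\nu(q)$ is even. Therefore $\nu-\#_+$ is even.

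\emph{Bound $\#_+\le\nu$,} by induction on $n$. For $n=0$, $q$ is a nonzero constant with no roots and $\nu=0$. For the inductive step, consider
\[
q'(x)=\sum_{i\ge1}a_ic_ix^{c_i-1}.
\]
Its coefficients have the same signs as $a_1,\dots,a_n$ and it has fewer terms. By the induction hypothesis, $\#_+(q')\le\nu(q')$, where $\nu(q')=\nu(q)$ or $\nu(q)-1$. The latter case occurs exactly when $a_0$ and $a_1$ have opposite signs.

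Next we relate the roots of $q$ and $q'$. Let $q$ have distinct positive roots $t_1<\dots<t_k$ with multiplicities $m_j$. Then $q'$ has a root of multiplicity $m_j-1$ at each $t_j$. By Rolle's theorem, $q'$ also has at least one root strictly between each consecutive pair $t_j,t_{j+1}$. So $\#_+(q')\ge\#_+(q)-1$, and hence $\#_+(q)\le\nu(q)+1$ in either case. The value $\nu(q)+1$ is excluded by the parity established above. This gives $\#_+(q)\le\nu(q)$, and together with parity, $\nu-\#_+\in 2\mathbb Z_{\ge0}$.

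For the remaining claims, the negative roots of $p$ are the positive roots of $p(-x)$, with the same multiplicities. Applying the result to $p(-x)$ gives the statement about $\#_-$. If $\nu\le1$, the only non-negative even number $\nu-\#_+$ can equal is $0$, so $\#_+=\nu$.

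The main obstacle is the multiplicity bookkeeping in the Rolle step, namely obtaining $\#_+(q')\ge\#_+(q)-1$ with multiplicities counted. It is also necessary to note that the inductive bound by itself only gives $\nu+1$, so it must be combined with the parity statement to conclude.
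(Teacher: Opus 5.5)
Your proof is correct. The paper gives no proof of its own; it only cites Anderson--Jackson--Sitharam and Wang. Your argument is essentially Wang's proof: parity from the signs at $0^+$ and $+\infty$, then induction on the number of terms via the derivative and Rolle's theorem, with parity ruling out $\#_+=\nu+1$.

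You were also right to read the conclusion as $\nu-\#_+\in 2\mathbb{Z}_{\ge 0}$. As printed it is false: $x^2-x+1$ has $\nu=2$ and $\#_+=0$. Finally, your division by $x^{b_0}$ covers $b_0=0$, which the printed hypothesis $0<b_0$ wrongly excludes, even though the paper applies the rule to polynomials with nonzero constant term.
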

\smallskip

The veracity of Fact (III) will be proven as a consequence of Sturm's Theorem, which we state here as presented in Biagioli, see \cite{biagioli2016methods}. Before this, however, we need to introduce the  concepts of
\textit{Sturm sequence} and \textit{sign variation}. Given a univariate polynomial $g(x)$ of degree $n$, consider the polynomials
\[g_0(x) = g(x),\]  \[g_1(x) = g^{\prime}(x), \mbox {the derivative of g, }\]  and, for $i >1$,
\[g_i(x) = (-1) \times \mbox{[the
remainder of the division of $g_{i-2}(x)$ by $g_{i-1}(x)$ ]}. \]
The sequence \[(g_0(x),\ g_1(x), \ldots,\ g_{n-1}(x),\ g_n(x))\] is called \textit{the Sturm sequence} of $g$.
Clearly,  the length of the Sturm sequence is at most the degree of $g(x)$.
Also, the \textit{number of sign variations} $\omega(y)$   of the Sturm sequence of $g(x)$ at $y \in \mathbb{R}$ is the number of sign changes (ignoring zeros) in the sequence of real numbers
\[(g_0(y),\ g_1(y), \ldots,\ g_{n-1}(y),\ g_n(y)).\]

 \begin{lemma}[Sturm's Theorem, 1829]\label{lem:Sturm}  The number of roots that the polynomial $g(x)$ has in the interval
$(a, b)$, provided that $g(a) \neq 0$, $g(b) \neq 0$ and $a < b$, is equal to $\omega(b) - \omega(a)$.
      \end{lemma}
\smallskip

Finally, we are able to present the proof of the main theorem of this subsection.
\smallskip

\begin{proof} (of Theorem \ref{the:main}) Firstly,   we claim  that Fact (I) is true.
Indeed, we may note that all the coefficients of \[f(x)=f\left( A,B,C,D,E,x\right)
={{x}^{5}}-A\, {{x}^{4}}-B\, {{x}^{3}}-C\, {{x}^{2}}-D x-E\] are positive, due the conditions on the parameters $p,q,r$.
The sequence of signs between consecutive coeficients of $f(x)$  is
\[(+,-,-,-,-,-), \]
that is, $f(x)$  presents only one sign change. Then Descartes' Rule says that
\[\#_+(f(x))=\nu(f(x)). \]
Since  we know that $\lambda_1(G) >0$ is an eigenvalue of $\mathbf{F}_{\pi}(G)$ by Lemma \ref{lem:equit},  we deduce that it  is the only positive root of   $f(x)$.
\smallskip

Also, it is easy to verify that\  $f(0)=-\left(6q(a+1)+5ap\right) <0$ \ and \ that  $f(-1/2)= -{(2 a+1)}/{32} <0$; thus, Fact (II) is true.

For proving Fact (III), we  apply Sturm's Theorem  to the second derivative  of $f(x)= f( A,B,C,D,E,x)$ as a function of $x$, which we call $g(x)$; thus, let
\[g(x)=\operatorname{g}\left( A,B,C,D,E,x\right) := 20\,{{x}^{3}}-12 A\, {{x}^{2}}-6 B x-2 C\,.\]
We must construct the Sturm sequence of $g$ and show that $g(x)$ has no roots in the interval $[-1/2,0]$.
For this purpose, let write  $g_0(x)=g(x)$ and let
$g_1\left(x\right)$ be the derivative of $g\left(x\right)$ as a function of $x$, that is,
\[\operatorname{g_1}\left(x\right) :=60 \, {{x}^{2}}-24 A x-6 B.\]
Then, since
\[g_2(x) = (-1) \times \mbox{ [the
remainder of the division of $g_{0}(x)$ by $g_{1}(x)$]}, \] we have
\[\operatorname{g_2}\left(x\right) :=\frac{\left( 20 B+8 {{A}^{2}}\right)  x+10 C+2 A B}{5}\]
Also, considering that
\[g_3(x) = (-1) \times \mbox{[the
remainder of the division of $g_{1}(x)$ by $g_{2}(x)$]}, \] we have
\[\operatorname{g_3}\left(x\right) :=-\frac{375 {{C}^{2}}+\left( 450 A B+120 {{A}^{3}}\right)  C-150 {{B}^{3}}-45 {{A}^{2}}\, {{B}^{2}}}{25 {{B}^{2}}+20 {{A}^{2}} B+4 {{A}^{4}}}\,.\]
Let obtain the sign variations $\omega(0)$ and  $\omega(-1/2)$ of the Sturm sequence of $g(x)$ in the ends of the interval $[-1/2, 0]$:
\[\operatorname{g_0}\left( A,B,C,D,E,0\right)=-2C<0;\]
\[g_1\left( A,B,C,D,E,0\right)=-6B <0;\]
\[g_2\left( A,B,C,D,E,0\right)=\frac{10 C+2 A B}{5} >0\]
Thus the number of sign variations of   the Sturm sequence of $g(x)$ at $y=0$ is
\[\omega(0)=(-, -, + , *),\]
where the \ $*$ \ at the last position means that we don't have to worry about the signs of $g_3(0)$ and $g_3(-1/2)$: indeed,   $g_3(x)$ is a constant which do not depend on $x$.
In the case of the number of  variations $\omega(-1/2)$, we must analyze through the original parameters, meaning that we need to recover the dependence of $A, B, C, D, E$ on the initial parameters $r,p, q$. We have
\[g_0\left(r ,p,q,-1/2\right)=\frac{-4 C(r,p,q)+6 B(r,p,q)-6 A(r,p,q)-5}{2}=\]
\[=\frac{-64 r q-40 q-54 r p+16 r-5}{2}.\]
It is suffices to analyze the sign of the numerator:
\[-(64 r q+40 q+54 r p-16 r+5) = -(16r\underset{>0}{\underbrace{(4q-1)}}+40q+54rp+5)<0.\]
For $g_1(-1/2)$, since $p,q\geq 1$, we have
\[g_1\left( r,p,q,-1/2\right)= -6 B(r,p,q)+12 A(r,p,q)+15= \]
\[=-3r\left(16 q+22p-10\right)-3\left(40q-13\right)<0.\]
For $g_2(-1/2)$,  it holds that
\[g_2\left( A,B,C,D,E,-1/2\right) =\frac{10 C+\left( 2 A-10\right)  B-4 {{A}^{2}}}{5}=\frac{2\left(5(C-B)+A(B-2A)\right)}{5}. \]
Since $r,p,q \geq 1$, we have
\[C(r,p,q)-B(r,p,q)=20rq+16q+18rp-6r+2 >0 \  \ \ \mbox{ and  }\]
\[B(r,p,q)-2A(r,p,q) =\left( 8 r+20\right)  q+11 r p-5 r-4 >0.\]
Thus,
\[g_2\left( A,B,C,D,E,-1/2\right)=\frac{2\left(5(C-B)+A(B-2A)\right)}{5}>0\]
So we have that the number of variations of the  Sturm sequence of $g(x)$ at $y=-1/2$ is
\[ \omega(-1/2)=(-, -,  + , *),\]
the same of the  Sturm sequence of $g(x)$ at $y=0$.
Since  $ \omega(0) - \omega(-1/2) =0$, Sturm's theorem (Lemma \ref{lem:Sturm}), assures that there is no root of $P_G(x)$ in the interval $[-1/2, 0]$.
So, Fact (III) is true and, following the argumentation exposed before, Theorem \ref{the:main} is proved.
\medskip
\end{proof}
\smallskip

 We may recall  that, since  $ P_3$ is a induced subgraph of $K_1 \oplus (K_1 \cup K_2)$, in view of Remark \ref{rem:interl}, we could disregard the presence of copies of $P_3$ in the description of the relaxed block star considered to prove the main result of this subsection. However, it remains to consider the factor that appears in the factorization of $P_G(x)$ in the presence of two or more copies of the graph $P_3$.

 \begin{proposition}\label{prop:P3} If $G=K_1 \oplus \bigg( (p_1K_{r_1} \cup \ldots \cup p_kK_{r_k}) \cup q\left(K_1 \oplus (K_1 \cup K_2)\right) \cup s\ P_3)\bigg)$, where $r,p,q \geq 1$ and $s\geq 2$ then  $(x^2+4x+2)^{s-1}$ divides the characteristic polynomial $P_G(x)$
 of $G$. Furthermore, the roots of $(x^2+4x+2)$ are less than $-1/2$.
\end{proposition}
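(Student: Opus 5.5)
We mimic the proof of Lemma \ref{fator3}: construct, from a small "quotient" matrix attached to one copy of $P_3$, a family of $s-1$ linearly independent eigenvectors of $\mathbf{D}(G)$ supported on pairs of copies of $P_3$.

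\emph{Setup.} Label each copy of $P_3$ as $u_1u_2u_3$, with $u_2$ its centre. In $G$ all three vertices are adjacent to the universal vertex $w_0$, so distances inside a copy are $d(u_1,u_2)=d(u_2,u_3)=1$ and $d(u_1,u_3)=2$. Any vertex outside the copy, other than $w_0$, is at distance $2$ from each of $u_1,u_2,u_3$, while $w_0$ is at distance $1$ from each of them. Thus, relative to the rest of the graph, the three vertices of a copy of $P_3$ are indistinguishable.

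\emph{The auxiliary matrix.} Consider vectors supported on one copy of $P_3$ that are symmetric under the swap $u_1\leftrightarrow u_3$, say $(a,b,a)$ on $(u_1,u_2,u_3)$. Take such a vector on the $i$-th copy and its negative on the $j$-th copy, with zeros elsewhere. Every row indexed by a vertex outside these two copies then receives the same contribution from both copies with opposite signs, hence $0$. This includes $w_0$, which contributes $2a+b$ and $-(2a+b)$.

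Within the $i$-th copy, the row of $u_1$ gives $0\cdot a+1\cdot b+2\cdot a$ from its own copy and $-2(2a+b)$ from the other copy. Note that $d(u_1,\cdot)=2$ to the other copy's vertices, so the cross term is $2(2a+b)$ times $-1$. The total is $2a+b-4a-2b=-2a-b$. The row of $u_2$ gives $2a$ from its own copy and $-2(2a+b)$ from the other, totalling $-2a-2b$. So the restricted action is
\[
\mathbf{N}=\begin{bmatrix}-2&-1\\-2&-2\end{bmatrix},
\]
whose characteristic polynomial is $(x+2)^2-2=x^2+4x+2$, as required.

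\emph{Eigenvectors and multiplicity.} For each eigenpair $(\lambda,[a,b]^t)$ of $\mathbf{N}$, define $\mathbf{u}_j$ for $j=1,\dots,s-1$ as the vector with $(a,b,a)$ on the first copy of $P_3$, $-(a,b,a)$ on the $(j+1)$-th copy, and zeros elsewhere. Exactly as in the proof of Lemma \ref{fator3}, each $\mathbf{u}_j$ satisfies $\mathbf{D}(G)\mathbf{u}_j=\lambda\mathbf{u}_j$. These vectors are linearly independent, since each has a distinct copy in its support. The two roots of $x^2+4x+2$ are distinct, so this yields multiplicity at least $s-1$ for each root. Hence $(x^2+4x+2)^{s-1}$ divides $P_G(x)$.

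\emph{Roots.} The roots are $-2\pm\sqrt2$, approximately $-0.586$ and $-3.414$. Both are less than $-1/2$, since $\sqrt2<3/2$.

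\emph{Main obstacle.} The only delicate point is checking the cross-copy contributions. The antisymmetric combination across two copies must cancel exactly on every outside row, including $w_0$ and all vertices of the $K_{r_i}$'s and of the $K_1\oplus(K_1\cup K_2)$'s. This relies on every such vertex being equidistant (distance $1$ or $2$) from all three vertices of each $P_3$ copy, which holds because $G$ has diameter $2$ with $w_0$ universal.
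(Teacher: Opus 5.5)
Your proof is correct and follows the paper's route: antisymmetric vectors $(a,b,a)$ and $-(a,b,a)$ on pairs of $P_3$ copies, which cancel on every outside row and reduce to the matrix $\begin{bmatrix}-2&-1\\-2&-2\end{bmatrix}$ with characteristic polynomial $x^2+4x+2$, exactly as in Lemma~\ref{fator3}. You also write out the row computations and the independence and multiplicity argument that the paper leaves as ``like before''.
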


\begin{proof} The proof follows the same reasoning used in the proof of the lemma \ref{fator3}.
    Here, we  consider the following labeling:
let \( w_0 \) denote the universal vertex; we assign labels  \( w_1, \dots, w_N\)  to the vertices of the subgraph $ (p_1K_{r_1} \cup \ldots \cup p_kK_{r_k}) \cup q\left(K_1 \oplus (K_1 \cup K_2)\right)$, where $N={p_1r_1+p_2r_2+\dots+p_kr_k+4q}.$
We now label the vertices of each copy of $P_3$ as follows: let \( v_1, v_2, v_3 \) be the vertices of the first $P_3$ and  we repeat this labeling for each of the \( s \) copies. Specifically, for each \( i = 1, \dots, s \), the vertices \( v_{3(i-1)+1}, v_{3(i-1)+2}, v_{3(i-1)+3} \) correspond to the \( i \)-th copy of $P_3$.
Using this labeling, the matrix $\mathbf{D}(G)$ can be written as:
\[
\mathbf{D}(G)=\left[
\begin{array}{c|c}
\mathbf{D}_1&\mathbf{D}_2\\
\hline \
\mathbf{D}_2^{t}&\mathbf{D}_3
\end{array}
\right],
\]
where $\mathbf{D}_1$ has order $(N+1)\times (N+1)$, $\mathbf{D}_2$ has order $(N+1)\times 3s$ and finally $\mathbf{D}_3$ has order $3s\times 3s.$
The matrix $\mathbf{D}_2$  has the form
\[
\mathbf{D}_2=\left[
\begin{array}{cccc}
1&1&\dots&1\\
2&2&\dots&2\\
\vdots&\vdots&\ddots&\vdots\\
2&2&\dots&2\\
\end{array}
\right].
\]
Since the matrix \( \mathbf{D}_3 \) encodes the pairwise distances between the vertices \(v_1, \dots, v_{3s}\), it
 can naturally be viewed as a block matrix, where each block corresponds to the distances between specific subsets of vertices.
In fact,
\[
\mathbf{D}_3=\left[
\begin{array}{ccc|ccc|c|ccc}
0&1&2&2&2&2&\dots&2&2&2\\
1&0&1&2&2&2&\dots&2&2&2\\
2&1&0&2&2&2&\dots&2&2&2\\
\hline\
2&2&2&0&1&2&\dots&2&2&2\\
2&2&2&1&0&1&\dots&2&2&2\\
2&2&2&2&1&0&\dots&2&2&2\\
\hline\
&\dots&&&\dots&&\dots&&\dots\\
\hline\
2&2&2&2&2&2&\dots&0&1&2\\
2&2&2&2&2&2&\dots&1&0&1\\
2&2&2&2&2&2&\dots&2&1&0\\
\end{array}
\right].
\]
Then, like before, it can be proven that the eigenvalues of the matrix
 $$\mathbf{M}=\left[\begin{array}{cc}
-2&-1\\
-2&-2
 \end{array}\right]$$
 are eigenvalues of $\mathbf{D}(G)$.
 Given that the characteristic polynomial of \( \mathbf{M} \) is \( P_M(x) = x^2 + 4x + 2 \), and that all of its roots, which are less than $-1/2$, appear as eigenvalues of \( \mathbf{D}(G) \) with multiplicity at least \( s- 1 \), the assertion follows immediately.
\end{proof}

Thus, from Lemma \ref{lem:block-graph}, Theorems \ref{theo:pto-mvs2} and \ref{the:main} and  Proposition \ref{prop:P3},  we have the following  general result.

\begin{corollary}\label{cor:caract-diam2}
   A  graph  of diameter 2  satisfies the condition $\lambda_2 <-1/2$ if and only if it is a connected induced subgraph  of a relaxed block star graph as  described in (\ref{eq:geral-relax}).
\end{corollary}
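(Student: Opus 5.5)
The corollary asserts an equivalence, so I will prove the two directions separately, splitting graphs of diameter $2$ according to whether they are block graphs.

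\emph{Necessity.} Let $G$ have diameter $2$ and $\lambda_2(G)<-1/2$. By Lemma~\ref{lem:guochordal}, $G$ is chordal. I then split on its minimal vertex separators.

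If every $mvs$ of $G$ is a singleton, then $G$ is a block graph by Lemma~\ref{lem:block}, and Lemma~\ref{lem:block-graph} applies. I need to check that each family in that lemma, restricted to diameter $2$, is a connected induced subgraph of a relaxed block star:
\begin{itemize}
\item a block star is $K_1\oplus(p_1K_{r_1}\cup\cdots\cup p_kK_{r_k})$, which is an induced subgraph of such a graph with one extra $K_1\oplus(K_1\cup K_2)$ component attached, since that component can be deleted;
\item a loose block graph of diameter $2$ has a single cut vertex, because two cut vertices force diameter at least $3$ (the argument in the proof of Theorem~\ref{theo:pto-mvs2}), so it is again a block star;
\item the induced subgraphs of $BG(p,q,3,2,2)$ and $BGA$ of diameter $2$ are likewise centered at one cut vertex, so they are block stars.
\end{itemize}
If some $mvs$ has cardinality $2$, Theorem~\ref{theo:pto-mvs2} applies directly. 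By Proposition~\ref{lem:mvs-cardinal3}, no other case occurs.

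\emph{Sufficiency.} Let $H$ be a connected induced subgraph of a relaxed block star $R$ as in (\ref{eq:geral-relax}). I would proceed in four steps.
\begin{enumerate}
\item Embed $R$ as an induced subgraph of some $G(r,p,q)$ from (\ref{eq:Grpq}). Take $r=\max_i r_i$ and replace each $K_{r_i}$ by $K_r$. Replace each $P_3$ by a copy of $K_1\oplus(K_1\cup K_2)$, which contains $P_3$ induced with the same center adjacent to the universal vertex. Take $p=\sum_i p_i$ and enlarge $q$ by $s$. If $q=0$ originally, still use $q\ge1$.
\item Check the distances. $G(r,p,q)$ has a universal vertex, so it has diameter $2$. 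Ptolemaic graphs are distance hereditary, and Remark~\ref{rem:interl} also covers induced subgraphs of diameter at most $2$. Hence $\mathbf D(H)$ is a principal submatrix of $\mathbf D(G(r,p,q))$.
\item Interlace. By Lemma~\ref{lem:interl}, $\lambda_2(H)\le\lambda_2(G(r,p,q))$, and Theorem~\ref{the:main} gives $\lambda_2(G(r,p,q))<-1/2$.
\item Treat small $H$. If $H$ has only one vertex, $\lambda_2$ is undefined, so the statement is read for nontrivial $H$. If $H=K_2$, then $\lambda_2=-1$.
\end{enumerate}
Proposition~\ref{prop:P3} is not needed for the inequality, since the embedding in step~1 absorbs the $P_3$ copies. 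I would mention it only to note consistency with the factorization of $P_G(x)$.

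The main obstacle is bookkeeping rather than new spectral work. I must confirm that the three block-graph families of Lemma~\ref{lem:block-graph}, cut down to diameter $2$, really are covered by relaxed block stars. I must also make sure the diameter-$2$ hypothesis is what allows the interlacing in step~3 without invoking distance-heredity separately.
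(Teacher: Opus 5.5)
Your proposal is correct and follows the paper's own route. Necessity combines Lemma~\ref{lem:block-graph} (a diameter-$2$ block graph has one cut vertex, so it is a block star and hence an induced subgraph of a relaxed block star) with Theorem~\ref{theo:pto-mvs2}, and sufficiency is Theorem~\ref{the:main} plus interlacing through Remark~\ref{rem:interl}; absorbing each $P_3$ into a copy of $K_1\oplus(K_1\cup K_2)$ rightly makes the paper's appeal to Proposition~\ref{prop:P3} unnecessary.

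One small tidy-up, inherited from the paper: Proposition~\ref{lem:mvs-cardinal3} literally treats only separators of size $3$. Size $\ge 4$ is excluded the same way: if $S=Q'\cap Q''$ for a clique-tree edge, then a vertex of $Q'\setminus S$, a vertex of $Q''\setminus S$ and any three vertices of $S$ induce the graph of Figure~\ref{fig:mvs3}(a).
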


\section*{Any triple block graph $Pt_2(p,q)$ satisfies $\lambda_2<-1/2$}

To complete the characterization, given in Theorem \ref{theo:pto-mvs3}, of graphs of diameter 3 which are not block graphs and have  $\lambda_2 < -1/2$, it remains to prove that the $Pt_1$ graph and any triple block graph $Pt_2(p,q)$, $p, q\geq 2$ ( recall Figure \ref{fig:PT1-PT2}) satisfy this spectral condition. The first fact follows straightforwardly, since $\lambda_2(Pt_1) \approx -0.50228$.
For the second,  considering that any $Pt_2(p,q)$ is a connected induced subgraph of $Pt_2(r,r)$ for $r=\max\{p,q\}$ and  taking into account Remark \ref{rem:interl}, it suffices to prove  the following result.

\begin{theorem}\label{the:Pt2}
Let $r\geq 2$  be an integer and $G=G_r$ be the triple block graph $Pt_2{(r,r)}$. Then the second largest eigenvalue of matrix $\mathbf{D}(G_r)$ is less than  $-1/2$.
\end{theorem}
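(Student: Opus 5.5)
The plan is to compute the full $\mathbf{D}$-spectrum of $G_r=Pt_2(r,r)$ by factoring $P_{G_r}(x)$, following the strategy used for Theorem~\ref{the:main}. Label the vertices as follows: $u,v$ are the two vertices of the unique $mvs$ of cardinality 2; $x,y$ are the two remaining vertices of the $K_4$ inside the full house; $e$ is the roof vertex, adjacent only to $u$ and $v$; $A$ is the set of $r-1$ further vertices forming the clique $K_r$ with $u$; $B$ is the set of $r-1$ further vertices forming the clique $K_r$ with $v$. Then $n=2r+3$, and every distance is $1$, $2$ or $3$; the pairs at distance $3$ are exactly those in $A\times B$.

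\emph{Step 1 (eigenvalue $-1$).} The sets $A$, $B$ and $\{x,y\}$ are cliques whose vertices have identical neighbourhoods outside the set. By Lemma~\ref{lem:lemasLuHH}(i), $-1$ is an eigenvalue with multiplicity at least $(r-2)+(r-2)+1=2r-3$. The remaining six eigenvalues come from the distance equitable partition $\pi=\{A,B,\{u\},\{v\},\{x,y\},\{e\}\}$ via Lemma~\ref{lem:equit}. One checks this count directly: the eigenvectors for $-1$ sum to zero on each cell of $\pi$, hence are orthogonal to the $6$-dimensional space of vectors constant on the cells, and $(2r-3)+6=n$.

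\emph{Step 2 (antisymmetric part).} The automorphism swapping $u\leftrightarrow v$ and $A\leftrightarrow B$, fixing $x,y,e$, splits the quotient into two parts. For vectors taking value $a$ on $A$, $-a$ on $B$, $b$ on $u$, $-b$ on $v$ and $0$ elsewhere, the distance matrix acts as
\[
\begin{bmatrix} -(2r-1) & -1\\ -(r-1) & -1\end{bmatrix}.
\]
Its characteristic polynomial is $x^2+2rx+r$, whose larger root is $-r+\sqrt{r^2-r}$. This root is less than $-1/2$, because $r^2-r<(r-\tfrac12)^2$.

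\emph{Step 3 (symmetric part).} For vectors constant on the cells $A\cup B$, $\{u,v\}$, $\{x,y\}$, $\{e\}$, the quotient matrix is
\[
\mathbf{N}=\begin{bmatrix} 4r-5 & 3 & 4 & 2\\ 3(r-1) & 1 & 2 & 1\\ 4(r-1) & 2 & 1 & 2\\ 4(r-1) & 2 & 2 & 0\end{bmatrix}.
\]
I will compute its characteristic quartic $h_r(x)$, which has coefficients polynomial in $r$. The largest root of $h_r$ is $\lambda_1(G_r)>0$ (Lemma~\ref{lem:equit}), so it remains to show that the other three roots are less than $-1/2$. I will mimic Facts (I)--(III):
\begin{enumerate}
\item[(I)] Show via Descartes' rule (Lemma~\ref{lem:Desc}) that $h_r$ has exactly one positive root, i.e.\ its coefficient sign pattern has a single change for all $r\ge2$.
\item[(II)] Show $h_r(0)<0$ and $h_r(-1/2)<0$.
\item[(III)] Exclude roots in $(-1/2,0)$. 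Since $h_r$ is a quartic, the simplest route is to check that $h_r''<0$ on $[-1/2,0]$, using the sign of the quadratic $h_r''$ at both endpoints together with the location of its vertex, or else Sturm's theorem (Lemma~\ref{lem:Sturm}) applied to $h_r''$. Concavity together with negative endpoint values then forces $h_r<0$ throughout the interval.
\end{enumerate}
Steps 1--3 together give $\lambda_2(G_r)<-1/2$.

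The main obstacle is Step 3: verifying, uniformly in $r\ge2$, the signs of the coefficients of $h_r$ and of $h_r(-1/2)$ and $h_r''$ on the interval. I expect these to reduce to polynomials in $r$ whose signs are evident after regrouping, as was done for $g_0(-1/2)$ in the proof of Theorem~\ref{the:main}. Any small case that resists, such as $r=2$, can be checked numerically.
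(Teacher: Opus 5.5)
Your overall route is the paper's: the same equitable partition $\{A\cup B,\{u,v\},\{x,y\},\{e\}\}$, Lemma~\ref{lem:lemasLuHH}(i) for the eigenvalue $-1$, an explicit $2\times 2$ block on vectors antisymmetric under $u\leftrightarrow v$, $A\leftrightarrow B$, and Descartes plus a concavity argument for the quartic. Your $r$ is the block size, so your $G_r$ has $2r+3$ vertices while the paper's has $2r+5$. Accordingly, your $x^2+2rx+r$ is the paper's $x^2+2(r+1)x+(r+1)$ after $r\mapsto r-1$. Your orthogonality count is a cleaner justification of the full factorization than the paper gives.

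Steps 1 and 2 are correct. However, $\mathbf{N}$ has a wrong entry. In the row of $\{e\}$, the $\{x,y\}$-entry is $d(e,x)+d(e,y)=2+2=4$, not $2$. A quick check is the identity $|V_i|\,b_{ij}=|V_j|\,b_{ji}$, which your matrix violates for this pair: $2\cdot 2\neq 1\cdot 2$. With your entry you would be analysing the wrong polynomial; already for $r=2$ its $x^2$-coefficient is $-36$ instead of $-40$. The correct quotient gives $h_r(x)=x^4-(4r-3)x^3-(25r-10)x^2-(43r-24)x-(16r-10)$, which is the paper's $P_{H_\pi}$ with $r$ replaced by $r-1$.

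The second problem is a logical gap in (III). Concavity plus negative endpoint values does \emph{not} force $h_r<0$ on the interval, because a concave function lies above its chords, not below. For example, $-(x+\tfrac14)^2+\tfrac{1}{100}$ is concave, negative at $-\tfrac12$ and at $0$, and positive at $-\tfrac14$. So facts (I)--(III) as you state them do not exclude two roots in $(-\tfrac12,0)$.

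The missing ingredient is the sign of $h_r'$ at the left endpoint. Since $h_r''<0$ on $[-\tfrac12,0]$, $h_r'$ is decreasing there. Hence $h_r'(-\tfrac12)<0$ gives $h_r'<0$ on the whole interval, so $h_r$ is decreasing and $h_r\le h_r(-\tfrac12)<0$. This is exactly the step the paper supplies by computing $P_{H_\pi}'(-1/2)=-21r-21/4$; in your normalization $h_r'(-\tfrac12)=\tfrac{63}{4}-21r<0$.

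With these two repairs, the checks you anticipated go through uniformly in $r\ge 2$:
\begin{itemize}
\item the coefficient sign pattern is $(+,-,-,-,-)$;
\item $h_r(0)=-(16r-10)<0$ and $h_r(-\tfrac12)=-(4r-3)/16<0$;
\item $h_r''(0)=-(50r-20)<0$ and $h_r''(-\tfrac12)=14-38r<0$.
\end{itemize}
Since $h_r''$ is an upward-opening parabola, negativity at both endpoints already gives $h_r''<0$ on the interval, so you do not need the location of its vertex.
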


\begin{proof}
For a fixed integer $r\geq 2$, let $G_r$ have its $2r+5$ vertices  labeled as indicated  in Figure \ref{fig:PT2(r,r)}.
\begin{figure}[h]
\begin{center}
\begin{tikzpicture} [scale=.35,auto=left]
 \tikzstyle{every node}=[circle, draw, fill=black,
                        inner sep=1.8pt, minimum width=4pt]

\node (o1) at (29,2) {};
\node at (29.5,1) [draw=none,fill=none] {$v_3$};
\node (p1) at (26,5)  {};
\node at (25.5,6) [draw=none,fill=none] {$v_4$};
\node (q1) at (26,2)  {};
\node at (25.5,1) [draw=none,fill=none] {$v_2$};
\node (r1) at (29,5) {};
\node at (29.5,6) [draw=none,fill=none] {$v_5$};
 \node (t1) at (27.5,8) {};
\node at (27.5,9) [draw=none,fill=none] {$v_1$};
 \node (s1) at (23,3) {};
\node (s2) at (23,6) {};
\node (y1) at (32,3) {};
\node (z1) at (32,6) {};

\foreach \from/\to in {o1/r1,o1/q1,o1/p1,p1/r1,p1/q1,q1/r1,s1/p1,s2/p1,r1/y1,r1/z1,t1/p1,t1/r1}
\draw (\from) -- (\to);

\draw[dashed] (s1) -- (s2) ;
 \draw[dashed] (y1) -- (z1) ;
\node at (24,4.5) [draw=none,fill=none] {$^{K_r}$};
\node at (31,4.5) [draw=none,fill=none] {$^{K_r}$};

\node at (28,-1) [draw=none,fill=none] {$Pt_2(p,q)$};

     \end{tikzpicture}
\caption{The graph  triple block $Pt_2(r,r)$. }
 \label{fig:PT2(r,r)}
\end{center}
\end{figure}
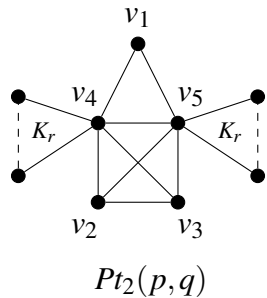

This labeling allows us to establish the partition $\pi = \{V_1, V_2, V_3, V_4\}$ of the vertex set of $G_r$, where:
 $V_1=\{v_1\}$\,, \
$V_2=\{v_2, v_3\}$\,, \
$V_3=\{v_4, v_5\}$\,, \ and
 $V_4=\left\{v_6, \dots, v_{r+5}, v_{r+6},\right.$ $\left. \ldots,  v_{2r+5}\right\}$, where $v_6, \dots, v_{r+5}, v_{r+6}, \ldots, v_{2r+5} $ are the labels assigned to the vertices of the two cliques $K_{r-1}$. Also,  accordingly to this labeling, the distance matrix $\mathbf{D}=\mathbf{D}(G_r)$
 can be described as
$$
\mathbf{D}=\left[\begin{array}{c|cc|cc|cccccccc}
0&2&2&1&1&2&2&\dots&2&2&2&\dots&2\\
\hline
2&0&1&1&1&2&2&\dots&2&2&2&\dots&2\\
2&1&0&1&1&2&2&\dots&2&2&2&\dots&2\\
\hline
1&1&1&0&1&1&1&\dots&1&2&2&\dots&2\\
1&1&1&1&0&2&2&\dots&2&1&1&\dots&1\\
\hline
2&2&2&1&2&0&1&\dots&1&3&3&\dots&3\\
2&2&2&1&2&1&0&\dots&1&3&3&\dots&3\\
\vdots&\vdots&\vdots&\vdots&\vdots&
\vdots&\vdots&\dots&\vdots&\vdots&\vdots&\dots&\vdots\\
2&2&2&1&2&1&1&\dots&0&3&3&\dots&3\\
\vdots&\vdots&\vdots&\vdots&\vdots&
\vdots&\vdots&\dots&\vdots&\vdots&\vdots&\dots&\vdots\\
2&2&2&2&1&3&3&\dots&3&0&1&\dots&1\\
2&2&2&2&1&3&3&\dots&3&1&0&\dots&1\\
\vdots&\vdots&\vdots&\vdots&\vdots&
\vdots&\vdots&\dots&\vdots&\vdots&\vdots&\dots&\vdots\\
2&2&2&2&1&3&3&\dots&3&1&1&\dots&0\\
\end{array}
\right]
$$
Furthermore,  $\pi$ is a distance equitable partition of the vertex set of $G_r$  having associated  distance divisor  matrix
\[\mathbf{H}_\pi(G_r)=\begin{bmatrix}
0 & 4 & 2 & 4r \\
2 & 1 & 2 &  4r\\
1 & 2 & 1 & 3r \\
2 & 4 & 3 & 4r-1 \\
\end{bmatrix}.\]
The characteristic polynomial of $\mathbf{H}_\pi(G_r)$ is given by
 \begin{equation} \label{eq:polPt2}P_{H_\pi}(x)={{x}^{4}}-\left( 4 r+1\right) \, {{x}^{3}}-\left( 25 r+15\right) \, {{x}^{2}}-\left( 43 r+19\right)  x-(16 r+6)\,.\end{equation}
We claim that the
 characteristic polynomial  $P_D(x)$ of $\mathbf{D}$ can be factorized as
   \begin{equation}\label{eq:fact} P_D(x)=(x+1)^{2r-1}\,(x^2+2(r+1)x+(r+1))\,P_{H_\pi}(x).
    \end{equation}
   To prove our claim we begin by observing  that  $\{v_6, v_7.,\ldots, v_{r+5}\}$ and $\{v_{r+6},\ldots, v_{2r+5}\}$ constitute two copies of $K_{r}$ and $\{v_2, v_3\}$ forms a $K_2$;  then, that  $-1$ is an eigenvalue of $\mathbf{D}$ with multiplicity at least $2r-1$ follows from Lemma \ref{lem:lemasLuHH}(i).  On the other hand, we know from Lemma \ref{lem:equit}, that $P_{H_\pi}(x)$ divides $P_D(x)$.
Also, we can guarantee that the polynomial $(x^2+2(r+1)x+(r+1))$\  divides $P_D(X).$
 The proof follows directly from the fact that
    the eigenvalues of the  matrix
 $$\mathbf{M}=\begin{bmatrix}
-1&-r\\
-1&-2r-1
 \end{bmatrix}$$
 are eigenvalues of $\mathbf{D}=\mathbf{D}(G_r).$
  Indeed,  let \( \lambda \) be an eigenvalue of the matrix \( \mathbf{M} \), and let \( [x_\lambda, y_\lambda]^{t} \) denote an eigenvector associated with \( \lambda \).
Then the  vector
$$
    \mathbf{u}=[0, 0, 0, x_\lambda, -x_\lambda,\underbrace{y_\lambda, \dots,  y_\lambda}_{r-1}, \underbrace{-y_\lambda, \dots,  -y_\lambda}_{r-1}]^{t}
    $$
is an eigenvector of $\mathbf{D}$ associated with $\lambda.$
Given that  \( P_M(x) = x^2 + 2(r+1)x + (r+1) \), and that its roots appear as eigenvalues of \( \mathbf{D} \), we conclude that $P_M(x)$ divides $P_D(x)$ as asserted. Our claim is proved.

To finalize the proof of  Theorem \ref{the:Pt2}, we must now turn our attention to the behavior of the  eigenvalues of $\mathbf{D}$ regarding to $-1/2$.
 First of all, the roots of $P_M(x)=x^2+2(r+1)+(r+1)$ are  strictly less than $-1/2.$
In fact, the roots are $x_1=-(r+1)-\sqrt{r^2+r}<-(r+1) <-r\leq -1$ and $x_2=-(r+1)+\sqrt{r^2+r}$;
but $x_2 < -1/2$ if and only if $\sqrt{r^2+r} < r+1/2$, or equivalently, $ 0 < 1/4$, which is true, so $x_2<-1/2$.

It remains to analyze the roots of the polynomial $P_{H_{\pi}}(x)$. 
Firstly, from (\ref{eq:polPt2}),  we may note that all the coefficients of
$P_{H_{\pi}}(x)$, except the leader one,   are negative, due the conditions on $r$.
 Thus, the sequence of signs between consecutive coefficients of $P_{H_\pi}(x)$  is
\[(+,-,-,-,-). \]
Noting that  $P_{H_\pi}(x)$  presents only one sign changes, Descartes' Rule of signs (Lemma \ref{lem:Desc}) says that
\[\#_+(P_{H_\pi}(x))=\nu(P_{H_\pi}(x)). \]
Since  we know that $\lambda_1(G_r) >0$ is an eigenvalue of $\mathbf{H}_{\pi}(G_r)$, we may conclude that   $P_{H_\pi}(x))$ has only one non-negative root.
 Now, we may note that
\[
0 > P_{H_\pi}\left(-\tfrac{1}{2}\right) = \frac{-(4r+1)}{16} > -6 - 16r =P_{H_\pi}(0).
\]
Furthermore, the second derivative of \( P_{H_\pi} \) is given by
\[P_{H_\pi}''(x)=12 {{x}^{2}}-6 \left( 4 r+1\right)  x-2 \left( 25 r+15\right)\,, \]
which satisfies \( P_{H_\pi}''(x) < 0 \) for all \( x \in \left[-\tfrac{1}{2}, 0\right] \). Therefore \( P_{H_\pi} \) is strictly concave on this interval. Since the derivative $P_{H_\pi}'(x)$ strictly decreases and  $P_{H_\pi}'(-1/2) =-21r-21/4$, then it does not vanish in this interval. Then the maximum of \( P_{H_{\pi}}(x) \) on the interval is attained at one of the endpoints. For all these reasons, it follows  that $P_{H_\pi}$ has no roots in the interval $\left[-\tfrac{1}{2}, 0\right]$.
Thus, considering the factorization of $P_D(x)$ given in (\ref{eq:fact}) and the facts  just proved, we can conclude that the second largest  eigenvalue of $\mathbf{D}(G_r)$ is less than $-1/2$, just as we wanted.
\end{proof}

  This way, joining Theorems \ref{theo:pto-mvs3} and \ref{the:Pt2}, we obtain the following result.

  \begin{corollary}\label{cor:caract-diam3}
      A graph  of diameter 3 which is not a block graph,  satisfies the condition $\lambda_2 <-1/2$ if and only if it is a connected induced subgraph  of $Pt_1$ graph or of any triple block graph $Pt_{(p,q)}$.
  \end{corollary}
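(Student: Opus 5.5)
The corollary is an ``if and only if'', and I will prove the two directions separately, using results already established in Sections 4 and 5.

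\emph{Necessity.} Let $G$ have diameter $3$, not be a block graph, and satisfy $\lambda_2(G)<-1/2$. By Lemma \ref{lem:guochordal}, $G$ is chordal. Since $G$ is not a block graph, Lemma \ref{lem:block}(iii) shows that some minimal vertex separator of $G$ has cardinality at least $2$. Proposition \ref{lem:mvs-cardinal3} rules out separators of cardinality $3$. The same argument rules out any larger cardinality, because a chordal graph with an $mvs$ of size at least $3$ contains the graph of Figure \ref{fig:mvs3}(a) as an induced subgraph. Hence $G$ has an $mvs$ of cardinality exactly $2$. Theorem \ref{theo:pto-mvs3} now applies directly and gives that $G$ is a connected induced subgraph of $Pt_1$ or of some $Pt_2(p,q)$ with $p,q\ge 2$.

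\emph{Sufficiency.} Let $G$ be a connected induced subgraph of $Pt_1$ or of $Pt_2(p,q)$, of diameter $3$ and not a block graph. For the ambient graph $H$ I need two facts.
\begin{enumerate}
\item[(a)] $\lambda_2(H)<-1/2$. For $Pt_1$ this is the numerical value $\lambda_2(Pt_1)\approx-0.50228$. For $Pt_2(p,q)$, set $r=\max\{p,q\}$; then $Pt_2(p,q)$ is a connected induced subgraph of $Pt_2(r,r)$, and Theorem \ref{the:Pt2} gives $\lambda_2(Pt_2(r,r))<-1/2$.
\item[(b)] $H$ is Ptolemaic, so that Remark \ref{rem:interl} applies. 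Each of these graphs is chordal with blocks that are cliques, diamonds or full houses. None contains an induced gem, so Lemma \ref{lem:pto} gives that it is Ptolemaic and hence distance hereditary.
\end{enumerate}
Granting (b), $\mathbf{D}(G)$ is a principal submatrix of $\mathbf{D}(H)$, and Cauchy interlacing (Lemma \ref{lem:interl}) gives $\lambda_2(G)\le\lambda_2(H)$. The same reasoning passes the bound from $Pt_2(p,q)$ up to $Pt_2(r,r)$, since that inclusion is also a connected induced subgraph of a Ptolemaic graph. Therefore $\lambda_2(G)<-1/2$.

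The only step needing real care is (b), that is, confirming that distances are preserved in connected induced subgraphs so the interlacing is legitimate. The cleanest route is to check gem-freeness directly: any five vertices inducing a gem would need a dominating vertex of an induced $P_4$. In these graphs such a vertex would have to lie in the full house or diamond block together with the path, and each such block has at most five vertices and is not itself a gem. Everything else is a direct assembly of earlier results.
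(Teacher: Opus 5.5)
Your proposal is correct and follows the paper's route. Necessity is Theorem~\ref{theo:pto-mvs3}, once Lemma~\ref{lem:block} and Proposition~\ref{lem:mvs-cardinal3} show that a non-block graph with $\lambda_2<-1/2$ has an $mvs$ of cardinality exactly $2$; sufficiency is $\lambda_2(Pt_1)\approx -0.50228$ together with Theorem~\ref{the:Pt2} and interlacing via Remark~\ref{rem:interl}. Your two extra checks, that separators of size larger than $3$ are also excluded and that $Pt_1$ and $Pt_2(p,q)$ are gem-free and hence Ptolemaic, spell out details the paper leaves implicit.
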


\section*{Acknowledgments}
The authors thank  Pedro Henrique Duarte Santos for the description of  relaxed block star graphs, which was obtained with the aid of the  GraphFilter \cite{graphfilter}.


\bibliographystyle{acm}
\bibliography{biblio-D}   

\begin{thebibliography}{10}

\bibitem{Anderson01051998}
{\sc Anderson, B., Jackson, J., and Sitharam, M.}
\newblock Descartes' rule of signs revisited.
\newblock {\em The American Mathematical Monthly 105}, 5 (1998), 447--451.

\bibitem{AOUCHICHE2014301}
{\sc Aouchiche, M., and Hansen, P.}
\newblock Distance spectra of graphs: A survey.
\newblock {\em Linear Algebra and its Applications 458\/} (2014), 301--386.

\bibitem{Beineke_Golumbic_Wilson_2021}
{\sc Beineke, L.~W., Golumbic, M.~C., and Wilson, R.~J.}, Eds.
\newblock {\em Topics in algorithmic graph theory}, vol.~178 of {\em
  Encyclopedia of Mathematics and its Applications}.
\newblock Cambridge University Press, Cambridge, 2021.

\bibitem{biagioli2016methods}
{\sc Biagioli, E.~J.}
\newblock Methods for bounding and isolating the real roots of univariate
  polynomials.
\newblock {\em IMPA\/} (2016).

\bibitem{BP93}
{\sc Blair, J. R.~S., and Peyton, B.}
\newblock An introduction to chordal graphs and clique trees.
\newblock In {\em Graph theory and sparse matrix computation}, vol.~56 of {\em
  IMA Vol. Math. Appl.} Springer, New York, 1993, pp.~1--29.

\bibitem{CRS2010}
{\sc Cvetkovi\'{c}, D., Rowlinson, P., and Simi\'{c}, S.}
\newblock {\em An introduction to the theory of graph spectra}, vol.~75 of {\em
  London Mathematical Society Student Texts}.
\newblock Cambridge University Press, Cambridge, 2010.

\bibitem{Di61}
{\sc Dirac, G.~A.}
\newblock On rigid circuit graphs.
\newblock {\em Abh. Math. Sem. Univ. Hamburg 25\/} (1961), 71--76.

\bibitem{Go04}
{\sc Golumbic, M.~C.}
\newblock {\em Algorithmic graph theory and perfect graphs}, second~ed.,
  vol.~57 of {\em Annals of Discrete Mathematics}.
\newblock Elsevier Science B.V., Amsterdam, 2004.
\newblock With a foreword by Claude Berge.

\bibitem{guo2024graphs}
{\sc Guo, H., and Zhou, B.}
\newblock Graphs for which the second largest distance eigenvalue is less than-
  12.
\newblock {\em Discrete Mathematics 347}, 9 (2024), 114082.

\bibitem{Ho81}
{\sc Howorka, E.}
\newblock A characterization of {P}tolemaic graphs.
\newblock {\em J. Graph Theory 5}, 3 (1981), 323--331.

\bibitem{graphfilter}
{\sc Jones, A.~A., de~Castro, L.~B., Pimenta, F.~S., and Pinto, I. R.~F.}
\newblock Graph filter: software for manipulating and searching graphs. version
  13.3.9, july 25, 2023.
\newblock Available on \url{http://sistemas.jf.ifsudestemg.edu.br/graphfilter}.
\newblock Accessed: 12-01-2025.

\bibitem{SurveyLinShu}
{\sc Lin, H., Shu, J., Xue, J., and Zhang, Y.}
\newblock A survey on distance spectra of graphs.
\newblock {\em Adv. Math. (China) 50}, 1 (2021), 29--76.

\bibitem{Liu-Xue-Guo}
{\sc Liu, R., Xue, J., and Guo, L.}
\newblock On the second largest distance eigenvalue of a graph.
\newblock {\em Linear Multilinear Algebra 65}, 5 (2017), 1011--1021.

\bibitem{LuHuangHuang2017}
{\sc Lu, L., Huang, Q., and Huang, X.}
\newblock The graphs with exactly two distance eigenvalues different from
  {$-1$} and {$-3$}.
\newblock {\em J. Algebraic Combin. 45}, 2 (2017), 629--647.

\bibitem{MP10}
{\sc Markenzon, L., and da~Costa~Pereira, P.~R.}
\newblock One-phase algorithm for the determination of minimal vertex
  separators of chordal graphs.
\newblock {\em Int. Trans. Oper. Res. 17}, 6 (2010), 683--690.

\bibitem{Wang2004}
{\sc Wang, X.}
\newblock A simple proof of {D}escartes's rule of signs.
\newblock {\em The American Mathematical Monthly 111}, 6 (2004), 525--526.

\bibitem{Xing-Zhou16}
{\sc Xing, R., and Zhou, B.}
\newblock On the second largest distance eigenvalue.
\newblock {\em Linear Multilinear Algebra 64}, 9 (2016), 1887--1898.

\bibitem{Xue-Lin-Shu-block}
{\sc Xue, J., Lin, H., and Shu, J.}
\newblock On the second largest distance eigenvalue of a block graph.
\newblock {\em Linear Algebra Appl. 591\/} (2020), 284--298.

\bibitem{YanChen1996}
{\sc Yan, J.-H., Chen, J.-J., and Chang, G.~J.}
\newblock \emph{Quasi}-threshold graphs.
\newblock {\em Discrete Appl. Math. 69}, 3 (1996), 247--255.

\bibitem{yang2025tricyclicgraphssecondlargest}
{\sc Yang, K., and Wang, L.}
\newblock Tricyclic graphs for which the second largest distance eigenvalue
  less than $-1/2$.
\newblock Available on \url{https://arxiv.org/abs/2509.12640h}, 2025.
\newblock preprint.

\end{thebibliography}
\end{document}